\DeclareMathOperator*{\Rtimes}{\rtimes}
\DeclareMathOperator{\Ker}{Ker}
\let\im\relax
\DeclareMathOperator{\im}{im}
\DeclareMathOperator{\id}{id}
\DeclareMathOperator{\Aut}{Aut}
\DeclareMathOperator{\Out}{Out}
\DeclareMathOperator{\Inn}{Inn}
\newcommand{\gen}[1]{\left\langle#1\right\rangle} 
\newcommand{\then}{\Rightarrow}
\renewcommand{\iff}{\Leftrightarrow}
\newcommand{\Iff}{\Longleftrightarrow}
\newcommand{\zz}{\mathbb{Z}}
\newcommand{\rr}{\mathbb{R}}
\newtheorem{lemma}{Lemma}[section]
\newtheorem{theorem}[lemma]{Theorem}
\newtheorem{proposition}[lemma]{Proposition}
\newtheorem{corollary}[lemma]{Corollary}
\newtheorem{definition}[lemma]{Definition}
\newenvironment{proof}{\medskip\noindent\textit{Proof: \/}}
               {\begin{flushright}\rule{2mm}{2mm}\end{flushright}\par\medskip}
\newenvironment{remark}{\bigskip\noindent\textbf{Remark: }}
               {\par\bigskip}
\begin{document}

\pagestyle{myheadings}
\markright{The group  Aut and Out of the fundamental group of a closed Sol $3$-manifold}
\title{The group  Aut and Out of the fundamental group of a closed Sol $3$-manifold} 
\date{\today}
\author{Daciberg Lima Gon\c calves\footnote{The author was partially supported by   
FAPESP Projetos Tem\'aticos Topologia Alg\'ebrica, Geom\'etrica e Diferencial 2012/24454-8 and 2016/24707-4 (Brazil)} \and 
   S\'ergio Tadao Martins\footnote{The author was partially supported by
FAPESP: 2013/07510-4.  }}
\maketitle

\begin{abstract}
Let $E$ be the fundamental group of a closed Sol $3$-manifold. We
describe the groups $\Aut(E)$ and $\Out(E)$. We first consider the
case where $E$ is the fundamental group of a torus bundle, and then
the case where $E$ is the fundamental group of a closed Sol
$3$-manifold which is not a torus bundle. The groups are described in
terms of some iterated semi-direct products of well known groups,
where one exception is for $\Out(E)$ for some $E's$, in which case
$\Out(E)$ is described as an extension and a presentation is given.

\bigskip {\em Keywords:\/} Sol 3-manifold, torus bundle, Anosov, Aut
and Out.

\bigskip {\em 2010 Mathematics Subject Classification:} primary:
57S05; secondary: 20E36, 57M99.
\end{abstract}

\section{Introduction}

The family of the closed $3$-manifolds which admit the $Sol$-geometry
is one of the $8$ families which appear in the Thurston geometrization
conjecture. All elements of this family are $K(\pi, 1)$ and they are
known to be Haken, i.e., irreducible and containing incompressible
surfaces. So a homotopy equivalence is homotopic to a
homeomorphism. Further, if two homeomorphisms are homotopic, they are
in fact isotopic.  The isotopy classes of base point preserving
homeomorphims of a $Sol$-manifold $N$ are into one-to-one
correspondence with $\Aut(E)$, where $E=\pi_1(N)$.  Furthermore, the
free isotopic classes of homeomorphims of $N$ are into one-to-one
correspondence with $\Out(E)$. See more about the mapping class group
of a 3-manifold and $\Aut(E), \Out(E)$ for $E$ the fundamental group
of the manifold in \cite{HoMc}.Therefore, the computation of $\Aut(E)$
and $\Out(E)$, besides having interest in their own right as an
algebraic problem, is also of interest in the study of $3$-dimensional
manifolds. Also other aspects of this family of manifolds have been
explored. For example, in \cite{sakuma}, the involutions on such
spaces have been studied in detail. In \cite{daci-peter} the Nielsen
fixed point theory has been studied. In \cite{suwawu} the degrees of
self-maps are classified. The present work is related with these three
works.
In this work we compute the groups $\Aut(G)$ and $\Out(G)$ for $G$ the
fundamental group of a closed $Sol$-manifold. It turns out that
$\Out(G)$ is finite. The calculation uses extensively properties of
the linear group $GL_2(\zz)$, some well known, others we explored,
although they should be known for the experts. As a by-product an
application for the fixed point theory of self maps on
$Sol$-manifolds, related with the work \cite{daci-peter}, is obtained,
see remark at the end of subsection~\ref{subsection:eqgl2}.
  
The groups in question can be divided into two classes. One class
consists of the groups of the form $E=(\zz\oplus\zz)\rtimes_{\theta}
\zz$, where $\theta(1)\in GL_2(\zz)$ is an Anosov matrix.  Let $M_0$
be a primitive root of $\theta(1)$. Depending on the matrix $M_0$
these groups are classified into 4 types.  The main results about
these groups are:

  {\bf Theorem 3.5} The group $\Aut(E)$ is given by:\\
\begin{enumerate}[I)]
\item Suppose that $\theta(1)$ is conjugate to a matrix of the form
  $\begin{pmatrix} x & y \\ z & x \end{pmatrix}$. Then $\Aut(E)\cong
  Aut_0(E)\rtimes_w \zz_2,$ where the action of the semi-direct product
  is given by the automorphism $w=w(1_2)\colon
  ((\zz\oplus\zz)\Rtimes\limits_{M_0}\zz)\Rtimes_{\phi} \zz_2 \to
  ((\zz\oplus\zz)\Rtimes\limits_{M_0}\zz)\Rtimes_{\phi} \zz_2$ defined
  by $w(\gamma_-)=\gamma_-$, $w(m, n) =
  -\theta(1) B_0(m, n)$, and
  \[
  w(\gamma_+) = \begin{cases}
    \gamma_+, & \text{if $B_0M_0B_0^{-1} = M_0^{-1}$},\\
    \gamma_-\gamma_+, & \text{if $B_0M_0B_0^{-1} = -M_0^{-1}$}.
  \end{cases}
  \]

\item Suppose that $\theta(1)$ is conjugate to a matrix of the form
  $\begin{pmatrix} x & y \\ y & z
\end{pmatrix}.$
 Then $\Aut(E)\cong
 ((\zz\oplus\zz)\Rtimes\limits_{M_0}\zz)\Rtimes_w \zz_4,$ where
 the action of the semi-direct product is given by the automorphism
 $w=w(1_4):((\zz\oplus\zz)\Rtimes\limits_{M_0}\zz) \to
 ((\zz\oplus\zz)\Rtimes\limits_{M_0}\zz)$ defined by
 $w(m, n) = -\theta(1) B_0(m, n)$ and
  \[
  w(\gamma_+) = \begin{cases}
    \gamma_+, & \text{if $B_0M_0B_0^{-1} = M_0^{-1}$},\\
    \gamma_-\gamma_+, & \text{if $B_0M_0B_0^{-1} = -M_0^{-1}$}.
  \end{cases}
  \]

\item Suppose that $\theta(1)$ is conjugate to a matrix of the form
$A=\begin{pmatrix}
x &  y\\
w-x & w
\end{pmatrix}$. Then $\Aut(E)\cong
  Aut_0(E)\rtimes_w \zz_2,$ where the action of the semi-direct product
  is given by the automorphism $w=w(1_2)\colon
  ((\zz\oplus\zz)\Rtimes\limits_{M_0}\zz)\Rtimes_{\phi} \zz_2 \to
  ((\zz\oplus\zz)\Rtimes\limits_{M_0}\zz)\Rtimes_{\phi} \zz_2$ defined
  by $w(\gamma_-)=\gamma_-$, $w(m, n) =
  -\theta(1) B_0(m, n)$, and
  \[
  w(\gamma_+) = \begin{cases}
    \gamma_+, & \text{if $B_0M_0B_0^{-1} = M_0^{-1}$},\\
    \gamma_-\gamma_+, & \text{if $B_0M_0B_0^{-1} = -M_0^{-1}$}.
  \end{cases}
  \]
  
\end{enumerate}

We also show that  
 $ \Aut_0 (E)\cong
((\zz\oplus\zz)\Rtimes\limits_{M_0}\zz)\Rtimes_{\phi} \zz_2$.

{\bf Theorem 3.6}
  Let $H = \dfrac{\langle \alpha,\beta\rangle}{\langle
    \kappa_d,\kappa_b \rangle} \cong
  \dfrac{\zz\oplus\zz}{(I_2-\theta(1))(\zz\oplus\zz)}$, which is a
  finite group since $\theta(1)$ is Anosov. Considering the four
  cases listed above, we have:
  \begin{enumerate}[I)]
  \item
    \begin{enumerate}[a)]
    \item If $\theta(1) = M_0^\ell$, $\Out(E) \cong (H \rtimes_{M_0}
      \zz_\ell)\rtimes \zz_2$, where the generator of $\zz_2$ acts on
      $H$ by multiplication by $-1$ and on $\zz_\ell$ as the identity.
    \item If $\theta(1) = -M_0^\ell$, $\Out(E) \cong H \rtimes_{M_0}
      \zz_{2\ell}$.
    \end{enumerate}
  \item
    \begin{enumerate}[a)]
    \item If $\theta(1) = M_0^\ell$, $\Out(E) \cong \bigl((H
      \rtimes_{M_0} \zz_\ell)\rtimes \zz_2\bigr)\rtimes \zz_2$, where
      the actions of the two $\zz_2$ factors are induced by the
      conjugations by $\gamma_-$ and $\xi$ in $\Aut(E)$.
    \item If $\theta(1) = -M_0^\ell$, $\Out(E) \cong (H \rtimes_{M_0}
      \zz_{2\ell})\rtimes \zz_2$, where the action of $\zz_2$ is
      induced by the conjugation by $\xi$ in $\Aut(E)$.
    \end{enumerate}
  \item
    \begin{enumerate}[a)]
    \item If $\theta(1) = M_0^\ell$, $\Out(E) = (H\rtimes_{M_0}
      \zz_\ell)\rtimes \zz_4$, where the action of $\zz_2$ is induced
      by the conjugation by $\xi$ in $\Aut(E)$.
    \item If $\theta(1) = -M_0^\ell$, $\Out(E) = (H\rtimes_{M_0}
      \zz_{2\ell})\rtimes \zz_2$, where the action of $\zz_2$ is
      induced by the conjugation by $\xi$ in $\Aut(E)$
       \end{enumerate}
       \item
        \begin{enumerate}[a)]
    \item If $\theta(1) = M_0^\ell$, $\Out(E) \cong \bigl((H
      \rtimes_{M_0} \zz_\ell)\rtimes \zz_2\bigr)\rtimes \zz_2$, where
      the actions of the two $\zz_2$ factors are induced by the
      conjugations by $\gamma_-$ and $\xi$ in $\Aut(E)$.
    \item If $\theta(1) = -M_0^\ell$, $\Out(E) \cong (H \rtimes_{M_0}
      \zz_{2\ell})\rtimes \zz_2$, where the action of $\zz_2$ is
      induced by the conjugation by $\xi$ in $\Aut(E)$.
    \end{enumerate}
  \end{enumerate}

 About the groups of the second class, also known   either as sapphire or semi-torus bundles, we have the following results:
 
 {\bf Theorem 4.10}
  The short exact sequence
  \[
  \begin{tikzcd}
    1 \arrow{r} & \Aut_0(E) \arrow{r} & \Aut(E) \arrow{r} & \zz_2 \arrow{r} & 1
  \end{tikzcd}
  \]
  splits. Hence
  \[
  \Aut(E) \cong \Aut_0(E)\rtimes_\alpha\zz_2 \cong \bigr[\bigl(\zz\oplus\zz)\rtimes_{-I_2}\zz_2\bigr]\rtimes_\omega\zz\bigr]\rtimes_\zeta \zz_2.
  \]

and

{\bf Theorem 4.11}
\begin{enumerate}[I)]
\item If $\Aut_0^1(E)=\emptyset$ we have
\begin{align*}
  \Out(E) = \langle \alpha,\,\beta,\,\rho, \mid \:&
  \alpha^2=\beta^{2s}=\rho^2=1,\\
  &\alpha\beta=\beta\alpha,\, \rho\alpha\rho=\alpha,\, \rho\beta\rho=\beta^{-1}
  \rangle.
\end{align*}
Hence  $\Out(E) \cong (\zz_2\oplus\zz_{2s})\rtimes_{-1}\zz_2$.\\
\item If  $\Aut_0^1(E)\ne\emptyset$
and $\theta(1) = M_0^{\ell_0} = (M_0^\ell)^2$, 
$\det(M_0^\ell) = 1$,  $M_0^\ell = \begin{pmatrix}
  r&s\\t&r\end{pmatrix}$ and the sapphire is defined by
  $B=\begin{pmatrix}r&-t\\-s&r\end{pmatrix}$, we have 
   $\Out(E) \cong
[(\zz_2\oplus\zz_{2t})\rtimes_{-1}\zz_2]\rtimes_\omega \zz_2,$ where 
\begin{align*}
   \omega\alpha\omega^{-1} = \alpha^r\beta^{st},\, \omega\beta\omega^{-1} = \alpha\beta^r,  \omega\rho\omega^{-1} = \alpha^t\beta^{t(r+1)}\rho.
\end{align*}\\
\item If   $\Aut_0^1(E)\ne\emptyset$ and $\theta(1) = -M_0^{\ell_0} = -(M_0^\ell)^2,$
$\det(M_0^\ell) = -1$, $M_0^\ell = \begin{pmatrix}
  r&s\\t&r\end{pmatrix}$ and the sapphire is defined by
  $B=\begin{pmatrix}r&-t\\s&-r\end{pmatrix}$, we have 
 a short exact sequence   

\[
\begin{tikzcd}
  1 \ar[r] & (\zz_2\oplus\zz_{2t})\rtimes_{-1}\zz_2 \ar[r] & \Out(E) \ar[r] & \zz_2 \ar[r] & 1
\end{tikzcd}
\]
and a presentation of $\Out(E)$ is given by
\begin{align*}
  \Out(E) = \langle \alpha,\,\beta,\,\rho,\,\omega \mid \:&
  \alpha^2=\beta^{2t}=\rho^2=1,\\
  & \alpha\beta=\beta\alpha,\, \rho\alpha\rho=\alpha,\, \rho\beta\rho=\beta^{-1},\\
  & \omega^2 = \beta^t\rho,\, \omega\alpha\omega^{-1} = \alpha^r\beta^{st},\, \omega\beta\omega^{-1} = \alpha\beta^r,\\
  & \omega\rho\omega^{-1} = \alpha^t\beta^{t(r+1)}\rho
  \rangle.
\end{align*}
\end{enumerate}

Finally, one remark about the calculations we had to work through:
specially in Section~\ref{section:sapphires}, we used the computer
algebra system Maxima (\texttt{maxima.sourceforge.net}) to
verify many of the identities that we present, since working them by
hand would  be extremely error-prone.

\section{Preliminaries}

The family of closed Sol $3$-manifolds can be divided  into two
types. One type is the subfamily of the torus bundles given by an
Anosov homeomorphism of the torus. Therefore any such manifold is
determined by a matrix $A\in GL_2(\zz)$ and is denoted by
$M_{A}$. Furthermore, from \cite[Lemma 1.1 item 4]{sakuma}
we have that $M_A$ is isomorphic to $M_B$ if and only if
$A$ is either conjugate to $B$ or to $B^{-1}$.

The second type of closed Sol $3$-manifolds are the ones which are
not torus bundle and belong to the subfamily of the so-called sapphires,
following~\cite{morimoto}. They are constructed as follows: for any Anosov matriz $A$ given by
\begin{equation}\label{gluing_map}
\begin{pmatrix}
  r & s \\
  t & u 
\end{pmatrix},
\end{equation}
we obtain a sapphire by gluing two $I$-bundles over the Klein bottle
by a homeomorphism of the boundary, which is a torus, defined by the
matrix $A$. Denote by $K_A$ the resulting
manifold. From~\cite{morimoto} we obtain that $K_A$ is not a torus
bundles if and only if $rstu\ne 0$. Also,~\cite[Theorem~1]{morimoto}
gives the precise description when two such matrices $A$ and  $B$ lead to
two homeomorphic manifolds $K_A$, $K_B$.

Now we recall the main result from \cite{wells}. Let us consider a
short exact sequence of groups
\begin{equation}\label{exactwells0}
\begin{tikzcd}[column sep=1.5em]
\phantom{.}1 \arrow{r} & G \arrow{r}{\iota} & E \arrow{r}{\eta} & \Pi \arrow{r} & 1.
\end{tikzcd}
\end{equation}
Denote    by: {\it   $\Aut(E; G)$ the set of automorphisms of $E$ which, when 
restricted to $G$ define an isomorphism of $G$;   $\mathcal{Z}(G)$ the centre of
$G$;   $\tilde{\alpha}\colon \Pi \to \Out(G)$ the homomorphism given
by the above extension;  $Z^1_{\alpha}(\Pi, \mathcal{Z}G)$ the crossed
homomorphisms.}

\begin{definition}
A pair $(\sigma, \tau) \in \Aut(\Pi) \times \Aut(G)$ is called
\emph{compatible} if $\sigma$ fixes $\Ker(\tilde \alpha)$ and the
automorphism induced by $\sigma$ on $\Pi{\tilde \alpha}$ is the same
as that induced by the inner automorphism of $\Out(G)$ determined by
$\tau$.
\end{definition}
Let $C$ be the set of all compatible pairs. The following Theorem is
proved in~\cite{wells}:
\begin{theorem}\label{theoremwells}
There is a set map $C \to  H^2(\Pi, \mathcal{Z}(G))$ such that the sequence
\begin{equation}\label{exactwells1}
\begin{tikzcd}[column sep=1.5em]
1 \arrow{r} & Z^1_{\alpha}(\Pi,  \mathcal{Z}(G)) \arrow{r} & \Aut(E; G) \arrow{r} &
C \arrow{r} & H^2(\Pi,  \mathcal{Z}(G))
\end{tikzcd}
\end{equation}
is exact. 
\end{theorem}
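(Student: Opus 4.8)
The plan is to construct the four-term sequence by hand: from an automorphism of $E$ preserving $G$ one reads off a compatible pair; the automorphisms inducing the trivial pair are precisely the crossed homomorphisms; and the obstruction to a compatible pair arising from an automorphism is a class in $H^2(\Pi,\mathcal{Z}(G))$. First I would define $\Theta\colon\Aut(E;G)\to\Aut(\Pi)\times\Aut(G)$ by $\Theta(\phi)=(\sigma,\tau)$, where $\tau:=\phi|_G\in\Aut(G)$ and $\sigma\in\Aut(\Pi)$ is induced by $\phi$ on $\Pi=E/G$ (this exists because $\phi(G)=G$, and satisfies $\eta\circ\phi=\sigma\circ\eta$). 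Applying $\phi$ to $ege^{-1}$ for $e\in E$, $g\in G$ yields $\tau(c_e(g))=c_{\phi(e)}(\tau(g))$, where $c_x$ is conjugation by $x$; since $\eta(\phi(e))=\sigma(\eta(e))$, passing to $\Out(G)$ gives $\tilde\alpha(\sigma(\pi))=[\tau]\,\tilde\alpha(\pi)\,[\tau]^{-1}$ for every $\pi$, where $[\tau]\in\Out(G)$ is the class of $\tau$. Hence $\sigma$ fixes $\Ker\tilde\alpha$ and induces on $\Pi\tilde\alpha=\tilde\alpha(\Pi)$ exactly conjugation by $[\tau]$, so $\Theta(\phi)\in C$; thus $\Theta$ lands in $C$ (which is, in fact, a subgroup of $\Aut(\Pi)\times\Aut(G)$, with $\Theta$ a homomorphism).

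For exactness at $Z^1_\alpha(\Pi,\mathcal{Z}(G))$ and at $\Aut(E;G)$, observe that $\phi\in\Ker\Theta$ iff $\phi|_G=\id_G$ and $\eta\circ\phi=\eta$. For such $\phi$ put $\delta(e):=\phi(e)e^{-1}$, which lies in $\Ker\eta=G$; applying $\phi$ to $ege^{-1}$ with $\phi(g)=g$ shows $\delta(e)$ centralises $G$, so $\delta(e)\in\mathcal{Z}(G)$, and $\phi|_G=\id_G$ gives $\delta(eg)=\delta(ge)=\delta(e)$, so $\delta$ descends to $\bar\delta\colon\Pi\to\mathcal{Z}(G)$; multiplicativity of $\phi$ turns $\delta(e_1e_2)=\delta(e_1)\bigl(e_1\delta(e_2)e_1^{-1}\bigr)$ into the crossed-homomorphism identity $\bar\delta(\pi_1\pi_2)=\bar\delta(\pi_1)\bigl(\pi_1\cdot\bar\delta(\pi_2)\bigr)$ for the $\Pi$-module $\mathcal{Z}(G)$ given by $\alpha$. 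Conversely $d\in Z^1_\alpha(\Pi,\mathcal{Z}(G))$ determines $\phi_d(e):=d(\eta(e))e$, an automorphism of $E$ fixing $G$ pointwise with $\eta\circ\phi_d=\eta$; the assignments $\phi\mapsto\bar\delta$ and $d\mapsto\phi_d$ are mutually inverse, so $d\mapsto\phi_d$ embeds $Z^1_\alpha(\Pi,\mathcal{Z}(G))$ into $\Aut(E;G)$ with image $\Ker\Theta$ (exactness at $\Aut(E;G)$), and its injectivity is the exactness at $Z^1_\alpha(\Pi,\mathcal{Z}(G))$.

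It remains to produce the set map $C\to H^2(\Pi,\mathcal{Z}(G))$ and prove exactness at $C$. Fix a set section $s\colon\Pi\to E$ of $\eta$ with $s(1)=1$, let $\varphi(\pi):=c_{s(\pi)}|_G\in\Aut(G)$ (a lift of $\tilde\alpha(\pi)$), and let $f\colon\Pi\times\Pi\to G$ be the factor set defined by $s(\pi_1)s(\pi_2)=f(\pi_1,\pi_2)s(\pi_1\pi_2)$, so that $\varphi(\pi_1)\varphi(\pi_2)=c_{f(\pi_1,\pi_2)}\varphi(\pi_1\pi_2)$ and $f$ satisfies the non-abelian $2$-cocycle identity. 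Given $(\sigma,\tau)\in C$, write each element of $E$ uniquely as $g\,s(\pi)$ and seek a lift $\phi(g\,s(\pi)):=\tau(g)\,h(\pi)\,s(\sigma(\pi))$ with $h\colon\Pi\to G$, $h(1)=1$. Requiring $\phi$ to be a homomorphism and isolating the $G$-variable splits the condition into (i) $\tau\circ\varphi(\pi)\circ\tau^{-1}=c_{h(\pi)}\circ\varphi(\sigma(\pi))$ for each $\pi$, which by compatibility of $(\sigma,\tau)$ is solvable for $h(\pi)$, any two solutions differing by an element of $\mathcal{Z}(G)$; and (ii), with such an $h$ fixed, the equation
\[
\tau\bigl(f(\pi_1,\pi_2)\bigr)\,h(\pi_1\pi_2)=h(\pi_1)\,\varphi\bigl(\sigma(\pi_1)\bigr)\bigl(h(\pi_2)\bigr)\,f\bigl(\sigma(\pi_1),\sigma(\pi_2)\bigr),
\]
which generally fails. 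Let $k(\pi_1,\pi_2)$ be the left-hand side of this equation times the inverse of the right-hand side; using (i) and the non-abelian cocycle identity for $(\varphi,f)$ one finds $c_{k(\pi_1,\pi_2)}=\id_G$, so $k$ is valued in $\mathcal{Z}(G)$ and defines a class $[k]\in H^2(\Pi,\mathcal{Z}(G))$. Changing $h$ to $z\cdot h$ for $z\colon\Pi\to\mathcal{Z}(G)$ (the full freedom in choosing $h$), or changing $s$ or the lifts $\varphi(\pi)$, alters $k$ by a coboundary, so $[k]$ depends only on $(\sigma,\tau)$; this is the desired map. If $[k]=0$, say $k=\partial z$, then $z^{-1}h$ still solves (i) (as $z$ is central) and makes $k$ identically trivial, so the resulting $\phi$ is a homomorphism; using that $\sigma$ and $\tau$ are bijective one checks $\phi$ is bijective and preserves $G$, hence $\phi\in\Aut(E;G)$ with $\Theta(\phi)=(\sigma,\tau)$. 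Conversely, if $(\sigma,\tau)=\Theta(\phi)$, then $h(\pi):=\phi(s(\pi))s(\sigma(\pi))^{-1}$ satisfies (i) and (ii), so its $k$ is trivial and $[k]=0$. Hence $\im\Theta$ is exactly the preimage of $0$ under $C\to H^2(\Pi,\mathcal{Z}(G))$, which is exactness at $C$.

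The main obstacle is the construction of the obstruction map: the factor-set computation must be done carefully enough to confirm that $k$ is central, that it determines a genuine cohomology class (here the interplay between the $\Pi$-module structure $\alpha$ and the automorphism $\sigma$, reconciled through $\tau|_{\mathcal{Z}(G)}$, needs attention), and that $[k]$ is independent of $s$, of the chosen lifts $\varphi(\pi)$ of $\tilde\alpha(\pi)$, and of the choice of $h$ among the solutions of (i). The remaining verifications are routine once each element of $E$ is written canonically via the section $s$.
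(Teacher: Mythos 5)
The paper does not prove this statement: it is quoted directly from Wells' 1971 paper (``The following Theorem is proved in \cite{wells}''), so there is no in-paper argument to compare against. Your reconstruction is essentially Wells' original factor-set/obstruction argument and is correct --- the identification of $\Ker\Theta$ with $Z^1_\alpha(\Pi,\mathcal{Z}(G))$, the centrality of $k$ via comparing $c_{\mathrm{LHS}}$ and $c_{\mathrm{RHS}}$, and the exactness at $C$ all check out --- and the two points you flag as needing care (the cocycle identity for $k$ and the reconciliation of the $\sigma$-twisted $\Pi$-action on $\mathcal{Z}(G)$ with the action $\alpha$ via $\tau$) are indeed the only nontrivial verifications you omit, and they go through as you indicate.
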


Sometimes it is more convenient to use the following exact sequence,
which promptly follows from the Theorem above:
\begin{equation}\label{exactwells2}
\begin{tikzcd}[column sep=1.5em]
1 \arrow{r} & Z^1_{\alpha}(\Pi, \mathcal{Z}(G)) \arrow{r} & \Aut(E; G) \arrow{r}{\psi} & \Aut(\Pi)\times \Aut(G),
\end{tikzcd}
\end{equation}
where the last map is the natural obvious one, and it is also a
homomorphism. For more details about the maps and homomorphisms that
appear in the theorem above see~\cite{wells}.
 
\begin{corollary} If the group $G$ has trivial center then we have an isomorphism 
\[
\begin{tikzcd}[column sep=1.5em]
\phantom{.}1 \arrow{r} & \Aut(E; G) \arrow{r}{\cong} & C \arrow{r} &  1.
\end{tikzcd}
\]
\end{corollary}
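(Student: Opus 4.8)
The plan is to read off the result directly from Theorem~\ref{theoremwells} by specializing the coefficient module. Since $G$ has trivial center, $\mathcal{Z}(G)$ is the one-element group, so every term in the Wells sequence built from $\mathcal{Z}(G)$ collapses: the group of crossed homomorphisms $Z^1_\alpha(\Pi,\mathcal{Z}(G))$ is trivial, and $H^2(\Pi,\mathcal{Z}(G))$ is trivial as well. Substituting these into the exact sequence~(\ref{exactwells1}) gives
\[
\begin{tikzcd}[column sep=1.5em]
1 \arrow{r} & 1 \arrow{r} & \Aut(E;G) \arrow{r} & C \arrow{r} & 1,
\end{tikzcd}
\]
from which I would conclude that the map $\Aut(E;G)\to C$ is injective, because its kernel is the image of $Z^1_\alpha(\Pi,\mathcal{Z}(G))=1$, and surjective, because its image is the preimage in $C$ of the distinguished element of $H^2(\Pi,\mathcal{Z}(G))=1$, which is all of $C$. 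Hence the map is a bijection of sets.

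The second step is to upgrade this bijection to an isomorphism of groups. Here I would invoke the reformulation~(\ref{exactwells2}): the map $\psi\colon\Aut(E;G)\to\Aut(\Pi)\times\Aut(G)$ is a genuine group homomorphism, its image lies inside $C$, and $C$ is a subgroup of $\Aut(\Pi)\times\Aut(G)$. The corestriction $\Aut(E;G)\to C$ is precisely the map appearing in the Wells sequence, so it is at once a homomorphism and (by the previous paragraph) a bijection, hence an isomorphism.

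The only delicate point — and it is a matter of bookkeeping rather than a real obstacle — is the interpretation of the tail $C\to H^2(\Pi,\mathcal{Z}(G))$, which is only a set map; "exactness at $C$" must be read as: the image of $\Aut(E;G)\to C$ equals the fiber over the class of the trivial extension. When $H^2$ has one element this fiber is all of $C$, which is what yields surjectivity. I would spell this out explicitly instead of treating the tail map as a homomorphism. Everything else is a direct specialization of Theorem~\ref{theoremwells}, so no further difficulty is anticipated.
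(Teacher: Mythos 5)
Your argument is correct and is exactly the intended one: the paper offers no separate proof for this corollary, treating it as the immediate specialization of Theorem~\ref{theoremwells} to $\mathcal{Z}(G)=1$, which is precisely what you carry out. Your extra care about the tail $C\to H^2(\Pi,\mathcal{Z}(G))$ being only a set map, and about the bijection being a group homomorphism via the corestriction of $\psi$, is sound and only makes explicit what the paper leaves implicit.
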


\begin{corollary}\label{exten}  If the extension (\ref{exactwells0})   is characteristic then  we have the following short exact sequence

\begin{equation}
\begin{tikzcd}[column sep=1.5em]
\phantom{.}1 \arrow{r} & Z^1_{\alpha}(\Pi,  \mathcal{Z}(G)) \arrow{r}{\iota} & Aut(E) \arrow{r}{\psi} &Im(\psi)  \arrow{r} & 1.
\end{tikzcd}
\end{equation}
\end{corollary}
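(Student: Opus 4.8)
The plan is to deduce this directly from Theorem~\ref{theoremwells}, in the form of the exact sequence~(\ref{exactwells2}). The essential point is to unwind the hypothesis: saying that the extension~(\ref{exactwells0}) is \emph{characteristic} means precisely that $\iota(G)$ is a characteristic subgroup of $E$, i.e.\ every automorphism of $E$ maps $\iota(G)$ onto itself and therefore restricts to an isomorphism of $G$. In the terminology preceding Theorem~\ref{theoremwells} this says exactly that $\Aut(E;G) = \Aut(E)$.

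First I would substitute this equality into~(\ref{exactwells2}), which gives the exact sequence
\[
1 \longrightarrow Z^1_{\alpha}(\Pi, \mathcal{Z}(G)) \xrightarrow{\ \iota\ } \Aut(E) \xrightarrow{\ \psi\ } \Aut(\Pi)\times\Aut(G).
\]
Here $\psi$ is a group homomorphism and $\iota$ is injective (both recorded just after~(\ref{exactwells2}) and coming from~\cite{wells}), and exactness at $\Aut(E)$ says $\ker\psi = \iota\bigl(Z^1_{\alpha}(\Pi,\mathcal{Z}(G))\bigr)$.

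Finally, by definition $\Im(\psi) = \psi(\Aut(E))$ is a subgroup of $\Aut(\Pi)\times\Aut(G)$, and $\psi$, regarded as a map $\Aut(E)\to\Im(\psi)$, is surjective with unchanged kernel. Combining this with the injectivity of $\iota$ and the identification of $\ker\psi$ above yields the short exact sequence
\[
1 \longrightarrow Z^1_{\alpha}(\Pi, \mathcal{Z}(G)) \xrightarrow{\ \iota\ } \Aut(E) \xrightarrow{\ \psi\ } \Im(\psi) \longrightarrow 1,
\]
which is the assertion. There is no genuine obstacle in this argument; the only step that requires attention is the identification $\Aut(E;G) = \Aut(E)$, which is exactly where the characteristic hypothesis is used, the remaining algebraic properties of $\iota$ and $\psi$ being supplied by Theorem~\ref{theoremwells} and~\cite{wells}.
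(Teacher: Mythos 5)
Your argument is correct and is exactly the one the paper intends: the corollary is stated without proof as an immediate consequence of the exact sequence~(\ref{exactwells2}), and the only substantive point is the identification $\Aut(E;G)=\Aut(E)$ under the characteristic hypothesis, followed by restricting the codomain of $\psi$ to its image. Nothing further is needed.
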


Let us recall how the action of $\im(\psi)$ on $Z^1_{\alpha}(\Pi,
\mathcal{Z}(G))$ is defined: given an element of $\lambda\in
\im(\psi)$, then there is an automorphism $\alpha: E \to E$ which
corresponds to the given element.  Then we must compute $\lambda
*(\beta)=\alpha\circ \beta\circ (\alpha)^{-1}$ where
$\beta=Id+\beta_0$ and $\beta_0$ is a derivation. The result is an
automorphism of $E$ that when restricted to the subgroup $G$ is the
identity and induces the identity on the quotient. So $\lambda
*(\beta)=Id +\beta_0'$ and it is easy to see that
$\beta_0'=\alpha|_{G}^{-1}\circ \beta_0\circ \bar \alpha$.

\section{The groups Aut and Out of the fundamental group of a torus bundle}

Here we consider the fundamental group of  torus bundles where the
homeomorphism used to construct it is Anosov.  It is well known that
these fundamental groups are semi-direct products
$(\zz\oplus\zz)\rtimes_{\theta} \zz$ where the automorphism
$\theta(1)$ is a $2\times 2$ Anosov matrix. To simplify, sometimes by
abuse of notation we denote $(\zz\oplus\zz)\rtimes_{\theta} \zz$ by
$(\zz\oplus\zz)\Rtimes\limits_{\theta(1)}\zz$.

So let $E=(\zz\oplus\zz)\rtimes_{\theta} \zz$, where $\theta(1)\in
GL_2(\zz)$ is an Anosov matrix. The short exact sequence
\[
\begin{tikzcd}[column sep=1.5em]
\phantom{.}1  \arrow{r} &  \zz\oplus\zz \arrow{r} & E \arrow{r} & \zz  \arrow{r} & 1
\end{tikzcd}
\]
splits and the action of $\zz$ on $\zz\oplus\zz$ is given by $\theta(1)$.
 
If we call $d$ and $b$ the generators of $\zz\oplus\zz \trianglelefteq
E$ which correspond to generators of the summands, respectively, and
$v$ an element of $E$ that projects to the generator $1\in \zz$, then
a presentation of $E$ is given by
\begin{equation}\label{torusbundlepresentation}
E = \gen{d,b,v \mid db=bd,\: vdv^{-1} = \theta(1)\begin{pmatrix}1\\0\end{pmatrix},\: vbv^{-1} = \theta(1)\begin{pmatrix}0\\1\end{pmatrix}},
\end{equation}
where we make the identification   $d^mb^n
= \begin{pmatrix}m\\n\end{pmatrix}$ for the elements of
  $\zz\oplus\zz$.

\begin{lemma}\label{extend0}
The subgroup $\zz\oplus\zz$ is characteristic with respect to
automorphisms of $E$. Hence we obtain the exact sequence
\[
\begin{tikzcd}[column sep=1.5em]
\phantom{.}1 \arrow{r} & \zz^1_{\theta}(\zz, \zz\oplus\zz) \arrow{r} & \Aut(E) \arrow{r}{\psi} & \Aut(\zz)\times \Aut(\zz\oplus\zz).
\end{tikzcd}
\]
\end{lemma}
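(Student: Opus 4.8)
The plan is to show that the normal subgroup $\zz\oplus\zz \trianglelefteq E$ is carried to itself by every automorphism of $E$, after which the exact sequence is immediate from the sequence~(\ref{exactwells2}) applied to the splitting short exact sequence $1 \to \zz\oplus\zz \to E \to \zz \to 1$ (with $\mathcal{Z}(\zz\oplus\zz) = \zz\oplus\zz$ and action $\theta$). So the entire content is the characteristicity claim.

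First I would give an intrinsic, automorphism-invariant description of $\zz\oplus\zz$ inside $E$. The natural candidate is that $\zz\oplus\zz$ is exactly the set of elements of $E$ whose centralizer is ``large'', or more precisely that it is the unique maximal normal abelian subgroup, or the unique normal subgroup isomorphic to $\zz^2$ with quotient $\zz$. Concretely, I would argue: any element $g = d^m b^n v^k \in E$ with $k \ne 0$ has the property that conjugation by $g$ on $\zz\oplus\zz$ acts as $\theta(1)^k$, which is Anosov, hence has no nonzero fixed vector; from this one computes that the centralizer of such a $g$ in $E$ is infinite cyclic (or at least does not contain a copy of $\zz^2$). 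By contrast every element of $\zz\oplus\zz$ is centralized by all of $\zz\oplus\zz$. Therefore $\zz\oplus\zz = \{\, g \in E : C_E(g) \text{ contains a subgroup isomorphic to } \zz\oplus\zz \,\}$, or alternatively one shows $\zz\oplus\zz$ is the set of $g$ for which the centralizer has finite index in no proper... — in any case, pick one such first-order description purely in terms of the group structure. Any automorphism preserves centralizers and the isomorphism type of subgroups, hence preserves this set, so it preserves $\zz\oplus\zz$.

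An alternative, and perhaps cleaner, route is via the derived series or the structure of $E$ as a polycyclic group: $E$ has Hirsch length $3$, $E/[E,E]$ is computed from the presentation~(\ref{torusbundlepresentation}) and is finite-by-$\zz$ (the $\zz$ coming from $v$), and one checks that $\zz\oplus\zz$ is the preimage in $E$ of the torsion subgroup of $E/[E,E]$ together with... Actually the most robust statement: $\zz\oplus\zz$ is the unique subgroup of $E$ isomorphic to $\zz^2$ that is normal, equivalently the Fitting subgroup of $E$ is $\zz \oplus \zz$ (since $E$ is polycyclic and $\zz\oplus\zz$ is nilpotent normal, while no larger subgroup is nilpotent because $E$ itself is not — it is $\zz^2 \rtimes \zz$ with Anosov action). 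The Fitting subgroup is characteristic, so we are done. I would likely present the Fitting-subgroup argument as the main line and sketch the centralizer argument as a remark.

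The main obstacle is purely bookkeeping: one must verify that $E$ is genuinely non-nilpotent and that $\zz\oplus\zz$ really is maximal among nilpotent (equivalently abelian) normal subgroups — i.e. that no element $d^mb^nv^k$ with $k\ne 0$ lies in the Fitting subgroup. This reduces to the elementary fact that an Anosov matrix $\theta(1)^k$ ($k \ne 0$) has $1$ as no eigenvalue, so $(I_2 - \theta(1)^k)$ is injective on $\zz\oplus\zz$; combined with the commutator identity $[v^k, d^m b^n] = \theta(1)^k\binom{m}{n} - \binom{m}{n} = (\theta(1)^k - I_2)\binom{m}{n}$ one sees the lower central series of any subgroup containing such a $v^k$ does not terminate. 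Once characteristicity is in hand, Corollary~\ref{exten}, or directly sequence~(\ref{exactwells2}) with $\Pi = \zz$ and $G = \zz\oplus\zz$, gives the displayed exact sequence, noting $Z^1_\theta(\zz, \zz\oplus\zz)$ is what the theorem calls $Z^1_\alpha(\Pi, \mathcal{Z}(G))$ and that the rightmost map $\psi$ is the natural one recording the action on $G$ and the induced automorphism of $\Pi = \zz$.
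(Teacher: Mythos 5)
Your proposal is correct, but it proves more than the paper does: for the characteristicity of $\zz\oplus\zz$ the paper simply cites \cite{daci-peter} (Gon\c calves--Wong), and then, exactly as you do, invokes Theorem~\ref{theoremwells} via the sequence~(\ref{exactwells2}) with $\Pi=\zz$, $G=\mathcal{Z}(G)=\zz\oplus\zz$. Your Fitting-subgroup argument is a legitimate self-contained replacement for that citation: $\zz\oplus\zz$ is abelian normal, hence lies in the Fitting subgroup, and any normal subgroup containing some $d^mb^nv^k$ with $k\ne 0$ fails to be nilpotent because $(\theta(1)^k-I_2)$ is injective (no eigenvalue of an Anosov matrix or of its nonzero powers equals $1$), so the commutator identity $[v^k,d^mb^n]=(\theta(1)^k-I_2)\binom{m}{n}$ forces the lower central series to keep producing finite-index subgroups of $\zz\oplus\zz$; since the Fitting subgroup of a polycyclic group is nilpotent and characteristic, $\zz\oplus\zz=\mathrm{Fitt}(E)$ is characteristic. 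The centralizer variant you sketch also closes up cleanly (for $k\ne 0$ the centralizer of $d^mb^nv^k$ embeds into $\zz$ because $I_2-\theta(1)^k$ is injective, so only elements of $\zz\oplus\zz$ have centralizers containing a copy of $\zz^2$), though as written you leave the choice of invariant description somewhat open-ended; committing to one formulation, as you say you would, is all that is needed. What your route buys is independence from the external reference; what the paper's route buys is brevity. The final deduction of the exact sequence is identical in both.
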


\begin{proof}
The first part follows from \cite{daci-peter}. The second part is a
consequence of Theorem~\ref{theoremwells}. More precisely, it follows
readily from equation~(\ref{exactwells2}).
\end{proof}

In order to compute $\Aut(E)$ we first determine all automorphisms
that induce the identity $\id_\zz$ on the quotient $\zz$. Next we
determine all the automorphisms that induce $-\id_\zz$ on the
quotient, and finally we describe the group structure of $\Aut(E)$ and
of $\Out(E)$.

For $S\in GL_2(\zz)$ let $C(S) = \{M \in GL_2(\zz) \::\:
MS = SM\}$.  It is known (see~\cite[Lemma 1.7]{sakuma})
that there is a matrix $M_0\in GL_2(\zz)$ such that
\begin{equation}\label{eq:M_0}
C(S) = \{\pm M_0^\ell \::\: \ell \in \zz\} \cong \zz_2\oplus\zz,
\end{equation}
if $S$ is neither the identity matrix $I_2$ nor $-I_2$.  If
$S$ is of infinite order then the number of possible matrices
$M_0$ are exactly $4$. Namely, given $M_0$, they are $\{ \pm
M_0^{\pm1}\}$.

\begin{proposition}\label{ident}
The intersection of image of $\psi$ with the subgroup $\{\id_\zz\}
\times \Aut(\zz\oplus\zz)$ is the subgroup generated by $(\id_\zz, M_0)$
and $(\id_\zz,-\id_{\zz\oplus\zz})$. If we denote by $\Aut_0(E)$ the
elements of $\Aut(E)$ which project to elements of the form $(\id_{\zz},
\tau)$, we have the short exact sequence
\[
\begin{tikzcd}[column sep=1.5em]
\phantom{.}1  \arrow{r} & (\zz\oplus\zz)\Rtimes\limits_{M_0}\zz \arrow{r} &  \Aut_0 (E) \arrow{r} & \zz_2 \arrow{r} & 1,
\end{tikzcd}
\]
where the map $\Aut_0 (E) \to \zz_2 $ is determined by the map sending
$(\id_\zz, M_0)$ to $0_2$ and $(\id_\zz,-\id_{\zz\oplus\zz})$ to
$1_2$. Further, the sequence splits, so $ \Aut_0 (E)\cong
((\zz\oplus\zz)\Rtimes\limits_{M_0}\zz)\Rtimes_{\phi} \zz_2$, and with
respect to the splitting given by $s(1_2)= ( \id_{\zz},
-\id_{\zz\oplus\zz}),$ the action is given by $\phi(m,n,t)=(-m,-n,t)$.
\end{proposition}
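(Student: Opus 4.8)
The plan is to work directly with the presentation~(\ref{torusbundlepresentation}) of $E$ and to analyze which pairs $(\id_\zz,\tau)\in\{\id_\zz\}\times\Aut(\zz\oplus\zz)$ actually occur in $\im(\psi)$, then to lift this information to the structure of $\Aut_0(E)$. First I would observe that if $\alpha\in\Aut(E)$ induces $\id_\zz$ on the quotient and $\tau$ on $\zz\oplus\zz$, then by Lemma~\ref{extend0} the datum $\tau$ must be compatible in the sense of Wells: since the extension class $\tilde\alpha\colon\zz\to\Out(\zz\oplus\zz)=GL_2(\zz)$ sends $1$ to (the class of) $\theta(1)$, and $\sigma=\id_\zz$ fixes everything, the compatibility condition forces $\tau$ to commute with $\theta(1)$ in $GL_2(\zz)$, i.e.\ $\tau\in C(\theta(1))$. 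By~(\ref{eq:M_0}), $C(\theta(1))=\{\pm M_0^\ell:\ell\in\zz\}$, which is exactly the subgroup generated by $M_0$ and $-\id_{\zz\oplus\zz}$. Conversely I would exhibit, for each such $\tau$, an explicit automorphism of $E$ realizing $(\id_\zz,\tau)$: namely fix $v\mapsto v$ and $\binom{m}{n}\mapsto\tau\binom{m}{n}$ on the normal subgroup; one checks on the relators of~(\ref{torusbundlepresentation}) that this is a homomorphism precisely because $\tau\theta(1)=\theta(1)\tau$, and it is bijective since $\tau$ is. This proves the first assertion.

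Next I would identify $\Aut_0(E)$ as an extension. By definition $\Aut_0(E)$ is the preimage under $\psi$ of $\{\id_\zz\}\times\im(\psi)$-part just computed, so combining with the exact sequence of Lemma~\ref{extend0} we get
\[
\begin{tikzcd}[column sep=1.5em]
1\arrow{r}&Z^1_\theta(\zz,\zz\oplus\zz)\arrow{r}&\Aut_0(E)\arrow{r}{\psi}&C(\theta(1))\arrow{r}&1.
\end{tikzcd}
\]
The point is to compute $Z^1_\theta(\zz,\zz\oplus\zz)$: a crossed homomorphism $\zz\to\zz\oplus\zz$ is determined by the image of $1$, and the crossed condition imposes no restriction on $\zz$ being free, so $Z^1_\theta(\zz,\zz\oplus\zz)\cong\zz\oplus\zz$ as a group, with the element $(p,q)$ corresponding to the automorphism $\binom{m}{n}\mapsto\binom{m}{n}$, $v\mapsto \binom{p}{q}v$ (an "inner-like" translation of $v$). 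Since $C(\theta(1))\cong\zz_2\oplus\zz$ is generated by $M_0$ (infinite order) and $-\id$ (order $2$), I would split the extension in two stages. First, lift $M_0$ to the automorphism $\mu$ fixing $v$ and acting as $M_0$ on $\zz\oplus\zz$; conjugating the translation automorphism by $(p,q)$ by $\mu$ gives the translation by $M_0(p,q)$, which shows the subgroup generated by $Z^1$ and $\mu$ is $(\zz\oplus\zz)\rtimes_{M_0}\zz$. Then lift $-\id$ via the splitting $s(1_2)=(\id_\zz,-\id_{\zz\oplus\zz})$, i.e.\ the automorphism $\nu$ fixing $v$ and acting as $-\id$ on $\zz\oplus\zz$; one computes $\nu^2=\id$ exactly (no correction term needed because $\nu$ fixes $v$), so the second extension by $\zz_2$ also splits, giving $\Aut_0(E)\cong((\zz\oplus\zz)\rtimes_{M_0}\zz)\rtimes_\phi\zz_2$.

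Finally I would pin down the action $\phi$. Conjugating the translation-by-$(p,q)$ automorphism by $\nu$ produces the translation by $-\id_{\zz\oplus\zz}(p,q)=(-p,-q)$, since $\nu$ acts as $-\id$ on $\zz\oplus\zz$ and the formula $\beta_0'=\alpha|_G^{-1}\circ\beta_0\circ\bar\alpha$ recalled at the end of Section~2 (with $\alpha=\nu$, $\bar\alpha=\id_\zz$) gives exactly the precomposition/postcomposition by $-\id$; and conjugating $\mu$ by $\nu$ gives back $\mu$ (both act trivially on $v$, and $(-\id)M_0(-\id)=M_0$). So in the coordinates $(m,n,t)$ with $t$ recording the $M_0$-power and the last $\zz_2$ suppressed, $\phi(m,n,t)=(-m,-n,t)$, as claimed. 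The main obstacle I anticipate is not any single computation but keeping the bookkeeping straight between the three "layers" ($Z^1$, the $M_0$-factor, the $\zz_2$-factor) and verifying that each successive extension genuinely splits — in particular checking that the chosen lifts of $M_0$ and $-\id$ satisfy the expected relations on the nose rather than only up to an element of $Z^1$; this is where a careful check on the relators of~(\ref{torusbundlepresentation}) is needed.
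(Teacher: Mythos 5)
Your proposal is correct and follows essentially the same route as the paper: both identify the image of $\psi$ over $\id_\zz$ with $C(\theta(1))=\{\pm M_0^\ell\}$ via the commutation condition $\tau\theta(1)=\theta(1)\tau$, both use the Wells sequence with $Z^1_\theta(\zz,\zz\oplus\zz)\cong\zz\oplus\zz$ realized as the translations $v\mapsto d^pb^qv$, and both exhibit explicit lifts of $M_0$ and $-\id_{\zz\oplus\zz}$ fixing $v$ (your $\mu$ and $\nu$ are the paper's $\gamma_+$ and $\gamma_-$) and verify the same conjugation relations to obtain the split extension and the action $\phi(m,n,t)=(-m,-n,t)$. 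No substantive difference; the check you flag as the main obstacle is exactly the short computation the paper carries out.
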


\begin{proof}
The equation $M\theta(1)=\theta(1) M$ is precisely the condition for the
existence of an automorphism on the semi-direct product which induces
the automorphisms $\id_\zz \in \Aut(\zz)$ and $M\in
\Aut(\zz\oplus\zz)$. The short exact sequence given by
Theorem~\ref{theoremwells} and the fact that $Z^1_{\theta}(\zz,
\zz\oplus\zz) = \zz\oplus\zz$, together with equation~(\ref{eq:M_0}),
give us the short exact sequence
\[
\begin{tikzcd}[column sep=3em]
\phantom{.}1  \arrow{r} & \zz\oplus\zz \arrow{r} &  \Aut_0 (E) \arrow{r}{\left.\psi\right|_{\Aut_0(E)}} & \zz_2\oplus\zz \arrow{r} & 1.
\end{tikzcd}
\]
The automorphisms $\alpha$, $\beta\in \Aut_0(E)$ defined by
\begin{align*}
&\alpha(d) = d, & \beta(d) = d,\\
&\alpha(b) = b, & \beta(b) = b,\\
&\alpha(v) = dv, & \beta(v) = bv
\end{align*}
generate $\ker(\left.\psi\right|_{\Aut_0(E)}) \cong \zz\oplus\zz$. The
automorphism $\gamma_+\in\Aut_0(E)$ given by
\[
\gamma_+(d^mb^n) = M_0\begin{pmatrix}m\\n\end{pmatrix}, \quad \gamma_+(v) = v
\]
is projected by $\psi$ onto $(\id_\zz,M_0)$ and the automorphism
$\gamma_-\in \Aut_0(E)$ defined by
\[
\gamma_-(d^mb^n) = d^{-m}b^{-n}, \quad \gamma_-(v) = v
\]
is projected onto $(\id_\zz,-\id_{\zz\oplus\zz})$. The subgroup of
$\Aut_0(E)$ generated by $\alpha$, $\beta$ and $\gamma_+$ is
isomorphic to $(\zz\oplus\zz)\rtimes_{M_0} \zz$. This can be readily
seen since the subgroup generated by $\gamma_+$ is infinite cyclic and
$(\gamma_+)(\alpha^x\beta^y)(\gamma_+)^{-1} =
M_0(\alpha^x\beta^y)$. Finally, the subgroup generated by $\alpha$,
$\beta$ and $\gamma_+$ has index $2$ in $\Aut_0(E)$, and this gives
the short exact sequence of the statement. If one wants a presentation
for $\Aut_0(E)$, a straightforward computation shows that
$(\gamma_-)\alpha(\gamma_-)^{-1} = \alpha^{-1}$,
$(\gamma_-)\beta(\gamma_-)^{-1} = \beta^{-1}$,
$(\gamma_-)(\gamma_+)(\gamma_-)^{-1} = \gamma_+$ and, of course,
$(\gamma_-)^2 = 1$.
The ``further'' part is straighforward.
\end{proof}

  
Although the explict calculation of a generator $M_0$ may not be
straightforward,  the  elements  $(\id_\zz, \theta(m))$, for $m\in Z\backslash\{0\}$
are non-trivial elements which belong to the image of $\psi$ besides $(\id_\zz,
-\id_{\zz\oplus\zz})$. This is not the case for the elements where the
first coordinate is $-\id_\zz$, since  the set of such elements may or may not 
be empty.  In order to find elements in the image of $\psi$ in
this case, this is equivalent to ask if for the given $\theta(1) \in
GL_2(\zz)$ we can find $B\in GL_2(\zz)$ such that $B\theta(1)
B^{-1}=\theta^{-1}(1)$. Following \cite{sakuma} we denote by
$R(\theta(1))$ the set of $B\in GL_2(\zz)$ such that $B\theta(1)
B^{-1}=\theta(1)^{-1}$. See the Appendix for a detailed analyzis of
the set $R(\theta(1))$, where we give necessary and sufficient
condition for the set $R(\theta(1))$ to be  not empty.


\begin{lemma}\label{minusident} There is   a short exact sequence
\[
\begin{tikzcd}[column sep=1.5em]
\phantom{.}1 \arrow{r} & \Aut_0(E) \arrow{r} & \Aut(E) \arrow{r}{\psi} &  \zz_2 \arrow{r} & 1,
\end{tikzcd}
\] if $R(\theta(1))$ is not empty, otherwise we have  $\Aut_0(E)  \cong
  \Aut(E)$. 
\end{lemma}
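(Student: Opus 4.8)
The plan is to analyze the image of $\psi$ inside $\Aut(\zz) \times \Aut(\zz\oplus\zz)$ using the exact sequence from Lemma~\ref{extend0} together with the description of $\Aut_0(E)$ from Proposition~\ref{ident}. Since $\Aut(\zz) = \{\pm\id_\zz\}$, the composite of $\psi$ with the projection onto the first factor $\Aut(\zz)$ is a homomorphism $\Aut(E) \to \zz_2$, so its image is either trivial or all of $\zz_2$. By definition, $\Aut_0(E)$ is precisely the kernel of this composite, i.e.\ the preimage of $\{\id_\zz\} \times \Aut(\zz\oplus\zz)$ under $\psi$. This immediately gives a short exact sequence $1 \to \Aut_0(E) \to \Aut(E) \to Q \to 1$, where $Q \subseteq \zz_2$ is the image; the only question is whether $Q = \zz_2$ or $Q = 1$.

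The key step is therefore to show that $Q = \zz_2$ if and only if $R(\theta(1)) \neq \emptyset$. For this I would argue as in the proof of Proposition~\ref{ident}: the condition for the existence of an automorphism of the semi-direct product $E = (\zz\oplus\zz)\rtimes_\theta\zz$ inducing $-\id_\zz$ on the quotient $\zz$ and some $B \in \Aut(\zz\oplus\zz) = GL_2(\zz)$ on the normal subgroup is precisely the compatibility condition of Wells' theorem, which in this situation reduces to the matrix equation $B\theta(1)B^{-1} = \theta(1)^{-1}$ (the $-1$ on the $\zz$-factor inverts the action). In other words, there is an element of $\Aut(E)$ mapping to $(-\id_\zz, B)$ under $\psi$ for some $B$ exactly when $R(\theta(1)) \neq \emptyset$; and the obstruction in $H^2(\zz, \zz\oplus\zz)$ from Theorem~\ref{theoremwells} vanishes automatically since $H^2(\zz, -) = 0$. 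Hence if $R(\theta(1)) \neq \emptyset$ then $Q = \zz_2$ and we get the four-term exact sequence of the statement; if $R(\theta(1)) = \emptyset$ then no automorphism induces $-\id_\zz$, so $Q = 1$ and $\psi$ exhibits $\Aut(E) = \Aut_0(E)$.

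The main obstacle — really the only nontrivial point — is verifying carefully that the compatibility condition of the Wells exact sequence, specialized to the split extension $1 \to \zz\oplus\zz \to E \to \zz \to 1$ with action $\theta$, is exactly the equation $B\theta(1)B^{-1} = \theta(1)^{-1}$ when the automorphism of the quotient is $-\id_\zz$, and that the $H^2$-obstruction is irrelevant because the quotient is $\zz$. Concretely, given $B \in R(\theta(1))$, one exhibits an explicit automorphism $\xi$ of $E$ by setting $\xi(d^m b^n) = B\begin{pmatrix}m\\n\end{pmatrix}$ and $\xi(v) = v^{-1}$ (possibly adjusted by an element of $\zz\oplus\zz$), and checks directly from presentation~(\ref{torusbundlepresentation}) that the defining relations are preserved, using $B\theta(1)B^{-1} = \theta(1)^{-1}$; conversely any automorphism inducing $-\id_\zz$ on $\zz$ restricts to some $B \in GL_2(\zz)$ with $\xi v \xi^{-1}$ acting on $\zz\oplus\zz$ by $\theta(1)^{-1}$, forcing $B\theta(1)B^{-1} = \theta(1)^{-1}$. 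Once this is in place, the exactness of the displayed sequence, and the splitting that determines the dichotomy, follow formally, so I expect this lemma to be short.
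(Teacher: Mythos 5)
Your argument is correct and follows essentially the route the paper intends: the paper's own proof is just ``straightforward,'' but the paragraph preceding the lemma already identifies the image of $\psi$ in the $\Aut(\zz)$-factor with the (non)emptiness of $R(\theta(1))$ via the equation $B\theta(1)B^{-1}=\theta(1)^{-1}$, which is exactly the dichotomy you verify. Your filling in of the details (the $H^2(\zz,-)=0$ vanishing and the explicit $\xi$ with $\xi(v)=v^{-1}$) is consistent with how the paper later constructs $\xi$ in Theorem~\ref{mainres}.
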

\begin{proof} The proof is straightforward.
\end{proof}

Now we calculate a presentation of $\Aut(E)$ assuming that
$R(\theta(1))$ is not empty, and let $B_0\in R(\theta(1))$. We use
~\cite[Proposition~1, page~139]{Jon}, and consider an automorphism
$\xi$ such that restricted to $\zz\oplus\zz$ is given by a matrix
$B_0\in R(\theta(1))$, and such that  induces $-\id_\zz$ on the
quotient $\zz$.
The matrix $B_0^2$ commutes with $\theta(1)$. Therefore if we choose
another element $\xi_1$ with similar properties as $\xi$, it must be
of the form $\xi_1=\varepsilon M_0^k \xi$ where $\varepsilon
\in\{I_2,-I_2\}$. It follows that
\[
\xi_1^2=\varepsilon M_0^k \xi \varepsilon M_0^k \xi=(\varepsilon)^2 M_0^k \xi M_0^k \xi^{-1}\xi^2=\xi^2,
\]
so $\xi^2$ is independent of the choice of $B_0\in R(\theta(1))$.  In
fact we can show:

\begin{lemma}
The set $R(\theta(1))$ is non empty if and only if the matrix
$\theta(1)$ has determinant $1$ and is conjugate to one of the   matrices  of
the form $A=\begin{pmatrix} x & y \\ z & x
\end{pmatrix}, $
or to a matrix of the form
$A=\begin{pmatrix}
x & y \\
y & z
\end{pmatrix},$
or to a matrix of the form
$A=\begin{pmatrix}
x &  y\\
w-x & w
\end{pmatrix}$.
In the first and third cases we have $B_0^2=I_2$ and in the second
case we have $B_0^2=-I_2$.
\end{lemma}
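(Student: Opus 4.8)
The plan is to analyze the condition $B\theta(1)B^{-1} = \theta(1)^{-1}$ in $GL_2(\zz)$ directly, using the trace and determinant as conjugacy invariants. Since $B\theta(1)B^{-1}$ must have the same characteristic polynomial as $\theta(1)$, and since $\theta(1)^{-1}$ has characteristic polynomial with constant term $1/\det(\theta(1))$ and linear coefficient $-\tr(\theta(1))/\det(\theta(1))$, comparing these forces first $\det(\theta(1))^2 = 1$ and then, after ruling out $\det(\theta(1)) = -1$ (which would force $\tr(\theta(1)) = -\tr(\theta(1))$, hence trace zero, contradicting that $\theta(1)$ is Anosov with a root $M_0$ of infinite order — or at least placing us outside the cases considered), we get $\det(\theta(1)) = 1$ and $\tr(\theta(1)) = \tr(\theta(1)^{-1})$, which is automatic. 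So the real content is: \emph{when} does $\theta(1)$ admit such a $B$, and what is $B_0^2$?

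First I would reduce to a normal form. Over $\zz$, an Anosov matrix $A = \theta(1)$ with $\det A = 1$ is $GL_2(\zz)$-conjugate to $A^{-1}$ if and only if there is $B \in GL_2(\zz)$ with $BAB^{-1} = A^{-1}$; I would set up the matrix equation $BA = A^{-1}B$ with $B = \begin{pmatrix} p & q \\ r & s \end{pmatrix}$ and $A = \begin{pmatrix} a & b \\ c & d \end{pmatrix}$, expand, and read off the linear system relating the entries of $B$ to those of $A$. Using $A^{-1} = \begin{pmatrix} d & -b \\ -c & a \end{pmatrix}$ (since $\det A = 1$), the equation $BA = A^{-1}B$ becomes a system of four linear equations in $p,q,r,s$ with coefficients in the entries of $A$. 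The solution space (before imposing $\det B = \pm 1$) is a sublattice of $\zz^4$, and I would compute $\det B$ as a quadratic form on that lattice; the claim $B_0^2 = \pm I_2$ should drop out because $B_0 A B_0^{-1} = A^{-1}$ gives $B_0^2 A B_0^{-2} = A$, so $B_0^2 \in C(A) = \{\pm M_0^\ell\}$, and then a parity/trace computation using $\det B_0 = \pm 1$ and $\tr B_0$ should pin down which element of $C(A)$ it is. In the determinant-$+1$ cases one expects $\tr B_0 = 0$ and $\det B_0 = -1$, forcing $B_0^2 = -(\det B_0) I_2 = I_2$ by Cayley--Hamilton; in the determinant-$-1$ case one expects $B_0^2 = -I_2$ similarly.

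Next I would identify the three conjugacy-class families explicitly. The symmetric form $\begin{pmatrix} x & y \\ y & z \end{pmatrix}$: here $B_0 = \begin{pmatrix} 1 & 0 \\ 0 & -1 \end{pmatrix}$ conjugates $A$ to $\begin{pmatrix} x & -y \\ -y & z \end{pmatrix}$, and one checks this equals $A^{-1}$ exactly when $\det A = xz - y^2 = 1$ and $x + z = 0$, i.e. the trace condition — wait, more carefully, $A^{-1} = \begin{pmatrix} z & -y \\ -y & x \end{pmatrix}$, so $B_0 A B_0^{-1} = A^{-1}$ forces $x = z$; this is the case $\tr A = 2x$ and here $B_0^2 = I_2$, \emph{but} the lemma asserts $B_0^2 = -I_2$ in the symmetric case, so I would instead use $B_0 = \begin{pmatrix} 0 & 1 \\ -1 & 0 \end{pmatrix}$ (rotation by $\pi/2$), which sends $\begin{pmatrix} x & y \\ y & z \end{pmatrix}$ to $\begin{pmatrix} z & -y \\ -y & x \end{pmatrix} = A^{-1}$, and indeed $B_0^2 = -I_2$. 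For $\begin{pmatrix} x & y \\ z & x \end{pmatrix}$ (equal diagonal entries), the matrix $B_0 = \begin{pmatrix} 0 & 1 \\ 1 & 0 \end{pmatrix}$ swaps the roles of the off-diagonal entries, sending $A$ to $\begin{pmatrix} x & z \\ y & x \end{pmatrix}$, which equals $A^{-1} = \begin{pmatrix} x & -y \\ -z & x \end{pmatrix}$ precisely when $z = -y$, a sub-case — here one needs the more general $B_0$ and $B_0^2 = I_2$. The third family $\begin{pmatrix} x & y \\ w-x & w \end{pmatrix}$ is designed so that the trace-and-determinant constraints plus an explicit involution $B_0$ (with $B_0^2 = I_2$) again realize $BAB^{-1} = A^{-1}$. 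The converse direction — that these three families \emph{exhaust} all $A$ with $R(A) \neq \emptyset$ — is where I would lean on the Appendix, which the excerpt promises gives necessary and sufficient conditions for $R(\theta(1))$ to be nonempty; concretely, one shows that $BAB^{-1} = A^{-1}$ with $\tr B_0 = 0$ (determinant-$1$ case) or with the corresponding constraint in the determinant-$-1$ case forces, after a change of basis making $B_0$ into one of the standard forms $\begin{pmatrix} 0 & 1 \\ \pm 1 & 0 \end{pmatrix}$ or $\begin{pmatrix} 1 & 0 \\ 0 & -1 \end{pmatrix}$, that $A$ lies in the claimed shape in that basis.

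\textbf{Main obstacle.} The genuinely delicate part is the converse — proving that $R(\theta(1)) \neq \emptyset$ \emph{only} in these three conjugacy families and no others, and correctly distinguishing when $\det \theta(1) = 1$ (giving $B_0^2 = I_2$ in the "equal-diagonal" and "$w-x$" forms) from when it is $-1$ (giving $B_0^2 = -I_2$ in the symmetric form). This requires a careful case analysis of the integral solutions $B_0$ to $B_0 A B_0^{-1} = A^{-1}$: one must show $\tr B_0$ is forced (via $B_0 \mapsto$ Cayley--Hamilton and the fact that $B_0$ inverts $A$ by conjugation, so $B_0$ normalizes but does not centralize the infinite cyclic part of $C(A)$, hence $B_0$ has order dividing $4$ in $\Out$ or in $PGL_2(\zz)$), and then classify the $GL_2(\zz)$-conjugacy classes of such finite-order $B_0$ — these are exactly the elements of order $2$ and $4$ in $GL_2(\zz)$, whose rational canonical forms are the three standard matrices above. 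Matching each to the constraint it imposes on $A$ gives the three families. I would present the involution/order-$4$ classification of $B_0$ first (citing standard facts about torsion in $GL_2(\zz)$, or the Appendix), then derive the shape of $A$ in each case, and finally verify $B_0^2$ by direct computation with the normal-form $B_0$.
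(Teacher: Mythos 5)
Your skeleton matches the paper's: force $\det\theta(1)=1$, show $B_0^2=\pm I_2$ so that $B_0$ lies in one of the three finite-order conjugacy classes of Corollary~\ref{conj}, and then read off the shape of $A$ for each normal form of $B_0$ — which is exactly Proposition~\ref{eqanti}, the result the paper's one-line proof cites. However, there is a genuine error in how you organize the dichotomy $B_0^2=I_2$ versus $B_0^2=-I_2$. In your closing paragraph you propose to distinguish ``when $\det\theta(1)=1$ (giving $B_0^2=I_2$\dots)'' from ``when it is $-1$ (giving $B_0^2=-I_2$ in the symmetric form)''. This cannot work: your own first paragraph correctly shows that $R(\theta(1))\ne\emptyset$ forces $\det\theta(1)=1$ for Anosov $\theta(1)$, and indeed the symmetric family has $xz-y^2=1$. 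The sign of $B_0^2$ is governed by $\det B_0$, not by $\det\theta(1)$: since $B_0\ne\pm I_2$ and $B_0^2=\pm I_2$, the minimal polynomial equals the characteristic polynomial, so $\tr B_0=0$ and Cayley--Hamilton gives $B_0^2=-\det(B_0)\,I_2$; thus $\det B_0=-1$ in the first and third families and $\det B_0=+1$ in the symmetric one. The same confusion already appears in your second paragraph (``in the determinant-$+1$ cases one expects \dots $\det B_0=-1$'').

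Separately, the ``if'' direction is actually completed only for the symmetric family. You test $\begin{pmatrix}1&0\\0&-1\end{pmatrix}$ against the symmetric form and $\begin{pmatrix}0&1\\1&0\end{pmatrix}$ against the equal-diagonal form; both work only on sub-cases, and you repair the symmetric case with the rotation, but you never exhibit a witness for the equal-diagonal family — it is $B_0=\begin{pmatrix}1&0\\0&-1\end{pmatrix}$, the matrix you tried on the wrong family — nor verify the third family at all, where $B_0=\begin{pmatrix}1&1\\0&-1\end{pmatrix}$ works. Finally, the step $B_0^2=\pm I_2$ is not merely ``a parity/trace computation'': from $B_0^2\in C(A)=\{\pm M_0^\ell\}$ one must still rule out $\ell\ne 0$, i.e.\ $B_0$ of infinite order; the paper devotes a separate proposition to this (showing $B_0$ cannot be Anosov, and cannot be conjugate to $\pm\begin{pmatrix}1&n\\0&1\end{pmatrix}$ with $n\ne0$). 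All of these gaps are fillable along the lines you indicate, but as written the argument proves only one of the three constructive cases and misattributes the source of the sign of $B_0^2$.
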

\begin{proof}
This follows promptly from Proposition \ref{eqanti} since $B_0$ is
conjugate to $\begin{pmatrix} 1 & 0 \\ 0 & -1
\end{pmatrix}, $ 
 $\begin{pmatrix} 1 & 1 \\ 0 & -1
\end{pmatrix},$ in the first and third cases, respectively,  and conjugate to 
 $\begin{pmatrix}
0 & -1 \\
1 & 0
\end{pmatrix}, $
in the second  case. 
\end{proof}
So to complete a presentation of the group it suffices to describe a
choice of a matrix $B_0$ and the action of the automorphism $\xi\in
\Aut(E)$ on $\Aut_0(E)$ by conjugation. The choice of $B_0$ can be
made using the appendix and then we can get a presentation of
$\Aut(E)$.

\begin{theorem}\label{mainres} The group $\Aut(E)$ is given by:\\
\begin{enumerate}[I)]
\item Suppose that $\theta(1)$ is conjugate to a matrix of the form
  $\begin{pmatrix} x & y \\ z & x \end{pmatrix}$. Then $\Aut(E)\cong
  Aut_0(E)\rtimes_w \zz_2,$ where the action of the semi-direct product
  is given by the automorphism $w=w(1_2)\colon
  ((\zz\oplus\zz)\Rtimes\limits_{M_0}\zz)\Rtimes_{\phi} \zz_2 \to
  ((\zz\oplus\zz)\Rtimes\limits_{M_0}\zz)\Rtimes_{\phi} \zz_2$ defined
  by $w(\gamma_-)=\gamma_-$, $w(m, n) =
  -\theta(1) B_0(m, n)$, and
  \[
  w(\gamma_+) = \begin{cases}
    \gamma_+, & \text{if $B_0M_0B_0^{-1} = M_0^{-1}$},\\
    \gamma_-\gamma_+, & \text{if $B_0M_0B_0^{-1} = -M_0^{-1}$}.
  \end{cases}
  \]

\item Suppose that $\theta(1)$ is conjugate to a matrix of the form
  $\begin{pmatrix} x & y \\ y & z
\end{pmatrix}.$
 Then $\Aut(E)\cong
 ((\zz\oplus\zz)\Rtimes\limits_{M_0}\zz)\Rtimes_w \zz_4,$ where
 the action of the semidirect product is given by the automorphism
 $w=w(1_4):((\zz\oplus\zz)\Rtimes\limits_{M_0}\zz) \to
 ((\zz\oplus\zz)\Rtimes\limits_{M_0}\zz)$ defined by
 $w(m, n) = -\theta(1) B_0(m, n)$ and
  \[
  w(\gamma_+) = \begin{cases}
    \gamma_+, & \text{if $B_0M_0B_0^{-1} = M_0^{-1}$},\\
    \gamma_-\gamma_+, & \text{if $B_0M_0B_0^{-1} = -M_0^{-1}$}.
  \end{cases}
  \]

\item Suppose that $\theta(1)$ is conjugate to a matrix of the form
$A=\begin{pmatrix}
x &  y\\
w-x & w
\end{pmatrix}$. Then $\Aut(E)\cong
  Aut_0(E)\rtimes_w \zz_2,$ where the action of the semi-direct product
  is given by the automorphism $w=w(1_2)\colon
  ((\zz\oplus\zz)\Rtimes\limits_{M_0}\zz)\Rtimes_{\phi} \zz_2 \to
  ((\zz\oplus\zz)\Rtimes\limits_{M_0}\zz)\Rtimes_{\phi} \zz_2$ defined
  by $w(\gamma_-)=\gamma_-$, $w(m, n) =
  -\theta(1) B_0(m, n)$, and
  \[
  w(\gamma_+) = \begin{cases}
    \gamma_+, & \text{if $B_0M_0B_0^{-1} = M_0^{-1}$},\\
    \gamma_-\gamma_+, & \text{if $B_0M_0B_0^{-1} = -M_0^{-1}$}.
  \end{cases}
  \]
  
\end{enumerate}
\end{theorem}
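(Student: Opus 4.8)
The plan is to build on the structural results already established: Lemma~\ref{minusident} gives the short exact sequence $1 \to \Aut_0(E) \to \Aut(E) \xrightarrow{\psi} \zz_2 \to 1$ whenever $R(\theta(1))$ is non-empty, and Proposition~\ref{ident} identifies $\Aut_0(E) \cong ((\zz\oplus\zz)\Rtimes_{M_0}\zz)\Rtimes_\phi\zz_2$. So the first task in all three cases is to show that this extension \emph{splits}, and in cases I) and III) the splitting is achieved simply by exhibiting the automorphism $\xi$ (restricting to $B_0$ on $\zz\oplus\zz$ and inducing $-\id_\zz$) and checking that $\xi^2 = \id$. By the lemma immediately preceding the theorem, $B_0^2 = I_2$ in cases I) and III), and together with the earlier observation that $\xi^2$ is independent of the choice of $B_0$ (so we may compute it with the most convenient $B_0$ from the Appendix — a conjugate of $\begin{pmatrix}1&0\\0&-1\end{pmatrix}$ or $\begin{pmatrix}1&1\\0&-1\end{pmatrix}$), one checks $\xi^2 = \id_E$ by a direct computation on the generators $d, b, v$. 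In case II), $B_0^2 = -I_2$, so $\xi^2 \ne \id$; instead $\xi^2$ equals the automorphism $\gamma_+$-type element associated to $-I_2 = \gamma_-^{?}$... more precisely $\xi^2$ acts as $-I_2$ on $\zz\oplus\zz$ and trivially on the $v$-direction, i.e. $\xi^2 = \gamma_-$ (up to an element of the $\zz$-factor generated by $\gamma_+$), so $\xi$ has order $4$ modulo $\langle\alpha,\beta,\gamma_+\rangle$ and the subgroup generated by $\xi$ together with the normal $(\zz\oplus\zz)\Rtimes_{M_0}\zz$ gives the $\zz_4$ extension directly — this is why case II) is stated as a $\zz_4$-semidirect product rather than an iterated $\zz_2$ one.

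Once splitting is established, the second task is to compute the action $w$ of the generator of the quotient ($\zz_2$ in I) and III), $\zz_4$ in II)) on $\Aut_0(E)$ by conjugation. This means computing $\xi\alpha\xi^{-1}$, $\xi\beta\xi^{-1}$, $\xi\gamma_+\xi^{-1}$, and $\xi\gamma_-\xi^{-1}$ in terms of the generators $\alpha,\beta,\gamma_+,\gamma_-$ of $\Aut_0(E)$. For the $\alpha,\beta$ part: since $\alpha,\beta$ correspond to the derivations sending $v \mapsto dv$, $v\mapsto bv$ (i.e. to $Z^1_\theta(\zz,\zz\oplus\zz) = \zz\oplus\zz$), the recipe recalled at the end of Section~2 — namely $\lambda*(\beta_0) = \alpha|_G^{-1}\circ\beta_0\circ\bar\alpha$ — shows that conjugation by $\xi$ acts on this $\zz\oplus\zz$ by the matrix $B_0^{-1}$ composed with the $\bar\alpha = -\id_\zz$ on the quotient, which after simplification using $B_0\theta(1)B_0^{-1} = \theta(1)^{-1}$ yields $w(m,n) = -\theta(1)B_0(m,n)$ as claimed (the sign and the extra $\theta(1)$ come from pushing the $-\id_\zz$ on the base through the crossed-homomorphism formula). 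For $\gamma_+$: since $\gamma_+$ restricts to $M_0$ on $\zz\oplus\zz$, conjugation by $\xi$ sends it to the automorphism restricting to $B_0 M_0 B_0^{-1}$, which by equation~(\ref{eq:M_0}) is $\pm M_0^{-1}$; when it is $M_0^{-1}$ we get $\gamma_+^{-1}$ but the statement writes this as $\gamma_+$ because the \emph{generator} of the infinite cyclic factor can be taken to be $\gamma_+$ after replacing $\gamma_+$ by $\gamma_+^{-1}$ if necessary — more carefully, $w(\gamma_+)$ here denotes the image which generates the same $\zz$, and the two cases record whether an additional $\gamma_-$ (the $-I_2$) appears, i.e. whether $B_0 M_0 B_0^{-1}$ equals $M_0^{-1}$ or $-M_0^{-1}$. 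For $\gamma_-$ (the $-\id_{\zz\oplus\zz}$ automorphism): it commutes with everything that restricts to a matrix and with $\xi$, so $w(\gamma_-) = \gamma_-$ in cases I) and III).

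The third and final task is bookkeeping: assemble the generators and relations. In cases I) and III) we have $\Aut(E) = \langle \alpha,\beta,\gamma_+,\gamma_-,\xi \mid \text{relations of }\Aut_0(E),\ \xi^2 = 1,\ \xi(\cdot)\xi^{-1} = w(\cdot)\rangle$, which is exactly $\Aut_0(E)\rtimes_w\zz_2$; in case II) we have $\Aut_0(E)$ replaced by its subgroup $(\zz\oplus\zz)\Rtimes_{M_0}\zz = \langle\alpha,\beta,\gamma_+\rangle$ (since $\gamma_- = \xi^2$ is now absorbed into the $\zz_4$) so $\Aut(E) = ((\zz\oplus\zz)\Rtimes_{M_0}\zz)\rtimes_w\zz_4$. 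One must double-check that $w$ as defined really is an automorphism of $\Aut_0(E)$ (respectively of $(\zz\oplus\zz)\Rtimes_{M_0}\zz$) and, in case II), that $w$ has the right order relative to $\xi^2$ so that the $\zz_4$-action is well defined — i.e. that $w^2$ equals conjugation by $\gamma_-$, which is the map $\phi$. The main obstacle I anticipate is precisely the sign-tracking in the conjugation formula for $\alpha,\beta$: getting $w(m,n) = -\theta(1)B_0(m,n)$ exactly right requires carefully composing the action of $\xi$ on the base $\zz$ (which is $-\id$) with its action on the fiber (which is $B_0$) inside Wells' crossed-homomorphism formalism, and a mis-placed inverse or sign would propagate into the wrong case-split for $w(\gamma_+)$. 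Everything else is a direct, if tedious, verification on the three generators $d,b,v$ of $E$, of the kind the authors note they checked by computer.
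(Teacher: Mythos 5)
Your proposal is correct and follows essentially the same route as the paper: starting from the short exact sequence of Lemma~\ref{minusident} and the description of $\Aut_0(E)$ in Proposition~\ref{ident}, one exhibits an explicit $\xi$ built from an explicit $B_0$ in each of the three cases, checks $\xi^2=\id_E$ (cases I and III) or $\xi^2=\gamma_-$ (case II, whence the $\zz_4$), and computes the conjugation action on $\alpha,\beta,\gamma_+,\gamma_-$, obtaining $w(m,n)=-\theta(1)B_0(m,n)$ and $\xi\gamma_+\xi^{-1}=\pm M_0^{-1}$-type elements exactly as the paper does. Your observation that the computation yields $\gamma_+^{-1}$ (resp.\ $\gamma_-\gamma_+^{-1}$) and must be reconciled with the statement's $\gamma_+$ by replacing the generator of the infinite cyclic factor is a fair reading; the paper's own proof records the same $\{\gamma_+^{-1},\gamma_-\gamma_+^{-1}\}$ outcome.
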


\begin{proof} 
  \begin{enumerate}[I)]
  \item Choose $B_0 = \begin{pmatrix}1 & 0\\0
    & -1\end{pmatrix}$ and $\xi(d^mb^n) = d^mb^{-n}$, $\xi(v) =
    v^{-1}$. Then
    \[
    \xi(\alpha^m\beta^n)\xi^{-1}(v)=d^rb^sv,
    \]
     where $(r,s)=  -\theta(1) B_0(m, n)$, 
    and $\xi\gamma_+\xi^{-1}(d^mb^n) = \pm M_0^{-1}(d^mb^n)$, hence
    $\xi\gamma_+\xi^{-1} \in \{\gamma_+^{-1},
    \gamma_-\gamma_+^{-1}\}$. We have $\xi^2=\id_E$ and
    $\xi\gamma_-\xi^{-1} = \gamma_-$, hence $\Aut(E) =
    \Aut_0(E)\rtimes \zz_2$ with the action given in the statement.
  \item Choose $B_0 = \begin{pmatrix}0 & -1\\1
    & 0\end{pmatrix}$ and $\xi(d^mb^n) = d^{-n}b^m$, $\xi(v) =
    v^{-1}$. Then
    \[
    \xi(\alpha^m\beta^n)\xi^{-1}(v)=d^rb^sv,
    \]
    where $(r,s)=  -\theta(1) B_0(m, n)$, and we have 
    $\xi\alpha\xi^{-1}=\theta(b^{-1})v$ and
    $\xi\beta\xi^{-1} = \theta(d)v$. We also have
    $\xi\gamma_+\xi^{-1}(d^mb^n) = \pm M_0^{-1}(d^mb^n)$ and
    $\xi\gamma_+\xi^{-1}(v) = v$, hence $\xi\gamma_+\xi^{-1} \in
    \{\gamma_+^{-1}, \gamma_-\gamma_+^{-1}\}$. Finally,
    $\xi^2=\gamma_-$ and we have $\Aut(E) \cong
    ((\zz\oplus\zz)\rtimes_{M_0}\zz)\rtimes\zz_4$, where
    $\zz_4$ is generated by the class of $\xi$.
  \item Choose $B_0 = \begin{pmatrix}1 & 1\\0
     & -1\end{pmatrix}$ and $\xi(d^{m}b^{n}) = d^{m+n}b^{-n}$, $\xi(v) =
    v^{-1}$. Then
    \[
    \xi(\alpha^m\beta^n)\xi^{-1}(v)=d^rb^sv,
    \]
    where $(r,s)= -\theta(1) B_0(m, n)$, and
    $\xi\gamma_+\xi^{-1}(d^mb^n) = \pm M_0^{-1}(d^mb^n)$, hence
    $\xi\gamma_+\xi^{-1} \in \{\gamma_+^{-1},
    \gamma_-\gamma_+^{-1}\}$. We have $\xi^2=\id_E$ and
    $\xi\gamma_-\xi^{-1} = \gamma_-$, hence $\Aut(E) =
    \Aut_0(E)\rtimes \zz_2$ with the action given in the statement.
  \end{enumerate}
\end{proof}

Now we calculate $\Out(E)$. For $g\in E$, we denote by $\kappa_g\in
\Inn(E)$ the conjugation by $g$, that is, $\kappa_g(x) = gxg^{-1}$. Let
$\theta(1) = \begin{pmatrix}r & s \\t & u\end{pmatrix}$; an easy
  calculation shows that $\kappa_d = \alpha^{1-r}\beta^{-t}$,
  $\kappa_b = \alpha^{-s}\beta^{1-u}$, so the subgroup $\langle
  \kappa_d,\kappa_b\rangle \le \langle \alpha,\beta \rangle \cong
  \zz\oplus\zz $ is isomorphic to $(I_2-\theta(1))(\zz\oplus\zz)$. We
  also have
\[
\kappa_v = 
\begin{cases}
  \gamma_+^\ell, & \text{if $\theta(1) = M_0^\ell$},\\
  \gamma_-\gamma_+^\ell, & \text{if $\theta(1) = -M_0^\ell$},
\end{cases}
\]

\bigskip

Observe that  every inner automorphism of
$E=(\zz\oplus\zz)\rtimes_{\theta}\zz$ induces the identity on the
quotient $\zz$. Now we compute $\Out_0(E)=\Aut_0(E)/\Inn(E)$ and
$\Out(E)=\Aut(E)/\Inn(E)$.  We consider 4 cases.
\begin{enumerate}[I)]
\item $\Aut_0(E) = \Aut(E)$;
\item $\Aut_0(E)\ne \Aut(E)$ and $\theta(1)$ is conjugate to $\begin{pmatrix}x & y \\ z & x\end{pmatrix}$.
\item $\Aut_0(E)\ne \Aut(E)$ and $\theta(1)$ is conjugate to $\begin{pmatrix}x & y \\ y & z\end{pmatrix}$.
\item $\Aut_0(E)\ne \Aut(E)$ and $\theta(1)$ is conjugate to $\begin{pmatrix}x & y \\ w-x & w\end{pmatrix}$.
\end{enumerate}

\begin{theorem}\label{mainres1}
  Let $H = \dfrac{\langle \alpha,\beta\rangle}{\langle
    \kappa_d,\kappa_b \rangle} \cong
  \dfrac{\zz\oplus\zz}{(I_2-\theta(1))(\zz\oplus\zz)}$, which is a
  finite group since $\theta(1)$ is Anosov. Considering the four
  cases listed above, we have:
  \begin{enumerate}[I)]
  \item
    \begin{enumerate}[a)]
    \item If $\theta(1) = M_0^\ell$, $\Out(E) \cong (H \rtimes_{M_0}
      \zz_\ell)\rtimes \zz_2$, where the generator of $\zz_2$ acts on
      $H$ by multiplication by $-1$ and on $\zz_\ell$ as the identity.
    \item If $\theta(1) = -M_0^\ell$, $\Out(E) \cong H \rtimes_{M_0}
      \zz_{2\ell}$.
    \end{enumerate}
  \item
    \begin{enumerate}[a)]
    \item If $\theta(1) = M_0^\ell$, $\Out(E) \cong \bigl((H
      \rtimes_{M_0} \zz_\ell)\rtimes \zz_2\bigr)\rtimes \zz_2$, where
      the actions of the two $\zz_2$ factors are induced by the
      conjugations by $\gamma_-$ and $\xi$ in $\Aut(E)$.
    \item If $\theta(1) = -M_0^\ell$, $\Out(E) \cong (H \rtimes_{M_0}
      \zz_{2\ell})\rtimes \zz_2$, where the action of $\zz_2$ is
      induced by the conjugation by $\xi$ in $\Aut(E)$.
    \end{enumerate}
  \item
    \begin{enumerate}[a)]
    \item If $\theta(1) = M_0^\ell$, $\Out(E) = (H\rtimes_{M_0}
      \zz_\ell)\rtimes \zz_4$, where the action of $\zz_2$ is induced
      by the conjugation by $\xi$ in $\Aut(E)$.
    \item If $\theta(1) = -M_0^\ell$, $\Out(E) = (H\rtimes_{M_0}
      \zz_{2\ell})\rtimes \zz_2$, where the action of $\zz_2$ is
      induced by the conjugation by $\xi$ in $\Aut(E).$
       \end{enumerate}
       \item
        \begin{enumerate}[a)]
    \item If $\theta(1) = M_0^\ell$, $\Out(E) \cong \bigl((H
      \rtimes_{M_0} \zz_\ell)\rtimes \zz_2\bigr)\rtimes \zz_2$, where
      the actions of the two $\zz_2$ factors are induced by the
      conjugations by $\gamma_-$ and $\xi$ in $\Aut(E)$.
    \item If $\theta(1) = -M_0^\ell$, $\Out(E) \cong (H \rtimes_{M_0}
      \zz_{2\ell})\rtimes \zz_2$, where the action of $\zz_2$ is
      induced by the conjugation by $\xi$ in $\Aut(E)$.
    \end{enumerate}
  \end{enumerate}
\end{theorem}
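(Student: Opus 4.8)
The plan is to locate $\Inn(E)$ inside $\Aut_0(E)$, divide to obtain $\Out_0(E)=\Aut_0(E)/\Inn(E)$, and then reconstruct $\Out(E)$ from $\Out_0(E)$ together with the class of the extra automorphism $\xi$. Since $\theta(1)$ is Anosov, $E$ has trivial centre, so $\Inn(E)\cong E\cong(\zz\oplus\zz)\rtimes_{\theta(1)}\zz$; moreover every inner automorphism induces $\id_\zz$ on the quotient, so $\Inn(E)\le\Aut_0(E)$. By the computations just above the statement, $\langle\kappa_d,\kappa_b\rangle$ is exactly the image of $I_2-\theta(1)$ in $\langle\alpha,\beta\rangle\cong\zz\oplus\zz$, which has finite index because $1$ is not an eigenvalue of $\theta(1)$; and $\kappa_v=\gamma_+^\ell$ if $\theta(1)=M_0^\ell$, while $\kappa_v=\gamma_-\gamma_+^\ell$ if $\theta(1)=-M_0^\ell$. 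So $\Inn(E)$ is the subgroup of $\Aut_0(E)=((\zz\oplus\zz)\rtimes_{M_0}\zz)\rtimes_\phi\zz_2$ generated by $(I_2-\theta(1))(\zz\oplus\zz)$ and $\kappa_v$.

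I would next compute $\Out_0(E)$ by quotienting the semidirect-product description of $\Aut_0(E)$ in Proposition~\ref{ident} by this subgroup, keeping track of the actions ($\gamma_+$ acts as $M_0$ and $\gamma_-$ as $-I_2$ on $\langle\alpha,\beta\rangle$, and $\gamma_-$ commutes with $\gamma_+$, all of which descends). When $\kappa_v=\gamma_+^\ell$, the factor $\langle\alpha,\beta\rangle$ becomes $H$, the factor $\langle\gamma_+\rangle$ becomes $\zz_\ell$, and $\langle\gamma_-\rangle\cong\zz_2$ is untouched, so $\Out_0(E)\cong(H\rtimes_{M_0}\zz_\ell)\rtimes\zz_2$. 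When $\kappa_v=\gamma_-\gamma_+^\ell$, killing $\kappa_v$ amalgamates $\langle\gamma_+\rangle$ and $\langle\gamma_-\rangle$ into a single cyclic group $\zz_{2\ell}$, with $\bar\gamma_-$ its element of order $2$, so $\Out_0(E)\cong H\rtimes_{M_0}\zz_{2\ell}$. In case~I one has $\Aut(E)=\Aut_0(E)$, so this already is $\Out(E)$ and parts I~a), I~b) follow at once.

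For cases II, III, IV, Lemma~\ref{minusident} and Theorem~\ref{mainres} give $\Aut(E)=\Aut_0(E)\cdot\langle\xi\rangle$ with $\xi\notin\Aut_0(E)$; since $\Inn(E)\le\Aut_0(E)$ this produces a short exact sequence $1\to\Out_0(E)\to\Out(E)\to\zz_2\to1$, and the conjugation action of $\xi$ on $\Out_0(E)$ is read off by reducing the formulas of Theorem~\ref{mainres} ($\xi(\alpha^m\beta^n)\xi^{-1}=\alpha^r\beta^s$ with $(r,s)=-\theta(1)B_0(m,n)$, and $\xi\gamma_+\xi^{-1}\in\{\gamma_+^{-1},\gamma_-\gamma_+^{-1}\}$) modulo $\Inn(E)$. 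In cases II and IV one has $\xi^2=\id_E$, so $\langle\bar\xi\rangle\cong\zz_2$ is a complement and $\Out(E)\cong\Out_0(E)\rtimes\zz_2$; substituting the two shapes of $\Out_0(E)$ from the previous paragraph gives II~a), II~b), IV~a), IV~b). In case III one has $\xi^2=\gamma_-$, so $\bar\xi$ has order $4$; when $\theta(1)=M_0^\ell$ the class $\bar\gamma_-$ is precisely the $\zz_2$-factor of $\Out_0(E)=(H\rtimes_{M_0}\zz_\ell)\rtimes\zz_2$ and $\langle\bar\xi\rangle\cong\zz_4$ meets $H\rtimes_{M_0}\zz_\ell$ trivially, giving $\Out(E)\cong(H\rtimes_{M_0}\zz_\ell)\rtimes\zz_4$ and part III~a).

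The main obstacle is case~III with $\theta(1)=-M_0^\ell$. Here $\bar\xi^2=\bar\gamma_-$ is already absorbed into the $\zz_{2\ell}$-factor of $\Out_0(E)=H\rtimes_{M_0}\zz_{2\ell}$, so $\bar\xi$ no longer provides a complement, and one must still show that the extension $1\to H\rtimes_{M_0}\zz_{2\ell}\to\Out(E)\to\zz_2\to1$ splits with the asserted action; this forces one to choose the generator of the complementary $\zz_2$ carefully, using the precise value of $\xi\gamma_+\xi^{-1}$ and of the matrix $-\theta(1)B_0$ acting on $H$, rather than simply taking the image of $\xi$. Establishing this splitting, together with pushing all the conjugation relations through the two successive quotients (first by $(I_2-\theta(1))(\zz\oplus\zz)$, then by $\langle\kappa_v\rangle$) without error, is the bulk of the work; the remaining parts are then routine.
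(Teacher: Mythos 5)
Your overall strategy is the same as the paper's: locate $\Inn(E)$ inside $\Aut_0(E)$ via $\kappa_d=\alpha^{1-r}\beta^{-t}$, $\kappa_b=\alpha^{-s}\beta^{1-u}$ and $\kappa_v\in\{\gamma_+^\ell,\gamma_-\gamma_+^\ell\}$, quotient the decomposition of Proposition~\ref{ident} to obtain $\Out_0(E)$ (which you correctly identify as $(H\rtimes_{M_0}\zz_\ell)\rtimes\zz_2$ when $\theta(1)=M_0^\ell$ and as $H\rtimes_{M_0}\zz_{2\ell}$ when $\theta(1)=-M_0^\ell$), and then adjoin the class of $\xi$ using the formulas of Theorem~\ref{mainres}. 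The paper carries out exactly this computation, phrased as explicit presentations, and your treatment of cases I, II, IV and III~a) matches it.

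The genuine gap is case III~b), which you explicitly leave open. You correctly observe that here $\bar\xi^{\,2}=\bar\gamma_-$ is a nontrivial element of the $\zz_{2\ell}$ factor of $\Out_0(E)$, so the class of $\xi$ is not itself a complement, and you say one must ``choose the generator of the complementary $\zz_2$ carefully'' --- but you never produce such a generator, and without it the asserted decomposition $\Out(E)\cong(H\rtimes_{M_0}\zz_{2\ell})\rtimes\zz_2$ is not proved; only the extension $1\to H\rtimes_{M_0}\zz_{2\ell}\to\Out(E)\to\zz_2\to1$ is. The paper closes this by exhibiting the section $\bar\xi\mapsto\xi\gamma_+^{-1}$: from $\xi^2=\gamma_-$ and $\xi\gamma_+\xi^{-1}=\xi^2\gamma_+^{-1}$ one gets $\xi\gamma_+^{-1}\xi=\gamma_+$, hence $(\xi\gamma_+^{-1})^2=1$ already in $\Aut(E)$, so its class generates a complement. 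Note that this verification is not vacuous: it uses the specific value $\xi\gamma_+\xi^{-1}=\gamma_-\gamma_+^{-1}$, since if instead $\xi\gamma_+\xi^{-1}=\gamma_+^{-1}$ the same element would satisfy $(\xi\gamma_+^{-1})^2=\gamma_-$, which is nontrivial modulo $\Inn(E)$ in this case. To complete your argument you need to supply this (or an equivalent) order-two lift and check it; the rest of your outline is sound.
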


\begin{proof}
  The analyses of the four  cases are similar.
  \begin{enumerate}[I)]
  \item Let $\theta(1) = \begin{pmatrix}r & s \\t & u\end{pmatrix}$.
    \begin{enumerate}[a)]
    \item A presentation for $\Out(E)$ is given by
      \begin{align*}
        \Out(E) = \langle \alpha,\,\beta,\,\gamma_+,\,\gamma_- \mid \:& \alpha\beta=\beta\alpha,\, \gamma_+(\alpha^m\beta^n)\gamma_+^{-1} = M_0(\alpha^m\beta^n), \\
        & \gamma_-^2 = 1,\, \gamma_-\alpha\gamma_-^{-1} = \alpha^{-1},\, \gamma_-\beta\gamma_-^{-1} = \beta^{-1},\, \gamma_-\gamma_+\gamma_-^{-1} = \gamma_+^{-1}, \\
        &\alpha^{1-r}\beta^{-t} = \alpha^{-s}\beta^{1-u} = 1,\, \gamma_+^\ell = 1\rangle,
      \end{align*}
      from which the statement follows.
    \item In this case, a presentatation of $\Out(E)$ is given by
      \begin{align*}
        \Out(E) &= \langle \alpha,\,\beta,\,\gamma_+,\,\gamma_- \mid \: \alpha\beta=\beta\alpha,\, \gamma_+(\alpha^m\beta^n)\gamma_+^{-1} = M_0(\alpha^m\beta^n), \\
        &\phantom{= \langle \alpha,\,\beta,\,\gamma_+,\,\gamma_- \mid \: } \gamma_-^2 = 1,\, \gamma_-\alpha\gamma_-^{-1} = \alpha^{-1},\, \gamma_-\beta\gamma_-^{-1} = \beta^{-1},\, \gamma_-\gamma_+\gamma_-^{-1} = \gamma_+^{-1}, \\
        &\phantom{= \langle \alpha,\,\beta,\,\gamma_+,\,\gamma_- \mid \: }\alpha^{1-r}\beta^{-t} = \alpha^{-s}\beta^{1-u} = 1,\, \gamma_- = \gamma_+^\ell\rangle \\
        \phantom{\Out(E)} &= \langle \alpha,\, \beta,\, \gamma_+ \mid \: \alpha\beta=\beta\alpha,\, \gamma_+^{2\ell} = 1,\, \gamma_+(\alpha^m\beta^n)\gamma_+^{-1} = M_0(\alpha^m\beta^n),\\
        &\phantom{= \langle \alpha,\, \beta,\, \gamma_+ \mid \: }\alpha^{1-r}\beta^{-t} = \alpha^{-s}\beta^{1-u} = 1\rangle,
      \end{align*}
      from which the statement follows.
    \end{enumerate}
  \item This case is analogous to the previous one.
  \item
    \begin{enumerate}[a)]
    \item This item is also analogous to case I.
    \item Let $\theta(1) = \begin{pmatrix}x & y \\ y & z\end{pmatrix}$
      and $B_0 = \begin{pmatrix}0 & -1\\1 & 0\end{pmatrix}$. In this
        case, a presentation of $\Out(E)$ is
      \begin{align*}
        \Out(E) &= \langle \alpha,\, \beta,\, \gamma_+,\, \xi \mid\: \alpha\beta=\beta\alpha,\, \alpha^{1-x}\beta^{-y} = \alpha^{-y}\beta^{1-z} = 1, \\
        &\phantom{= \langle \alpha,\, \beta,\, \gamma_+,\, \xi \mid\: }\gamma_+(\alpha^m\beta^n)\gamma_+^{-1} = M_0(\alpha^m\beta^n),\\
        &\phantom{= \langle \alpha,\, \beta,\, \gamma_+,\, \xi \mid\: }\xi(\alpha^m\beta^n)\xi^{-1} = -\theta B_0(\alpha^m\beta^n),\, \xi\gamma_+\xi^{-1} = \xi^2\gamma_+^{-1},\\
        &\phantom{= \langle \alpha,\, \beta,\, \gamma_+,\, \xi \mid\: }\xi^4=1,\, \xi^2\gamma_+^\ell = 1\rangle \\
        &=\langle \alpha,\, \beta,\, \gamma_+,\, \xi \mid\: \alpha\beta=\beta\alpha,\, \alpha^{1-x}\beta^{-y} = \alpha^{-y}\beta^{1-z} = 1, \\
        &\phantom{= \langle \alpha,\, \beta,\, \gamma_+,\, \xi \mid\: }\gamma_+(\alpha^m\beta^n)\gamma_+^{-1} = M_0(\alpha^m\beta^n),\\
        &\phantom{= \langle \alpha,\, \beta,\, \gamma_+,\, \xi \mid\: }\xi(\alpha^m\beta^n)\xi^{-1} = -\theta B_0(\alpha^m\beta^n),\, \gamma_+\xi^{-1} = \xi\gamma_+^{-1},\\
        &\phantom{= \langle \alpha,\, \beta,\, \gamma_+,\, \xi \mid\: }\xi^2=\gamma_+^\ell,\, \gamma_+^{2\ell} = 1\rangle,
      \end{align*}
      from which the statement follows, since the exact sequence
      \[
      \begin{tikzcd}
        1 \ar[r] & H\rtimes_{M_0} \zz_{2\ell} \ar[r] & \Out(E) \ar[r] & \zz_2 = \langle \overline\xi\rangle \ar[r] & 1
      \end{tikzcd}
      \]
      splits, with a section given by $\overline\xi \mapsto \xi\gamma_+^{-1}$.
    \end{enumerate}
     \item This case is analogous to the case II, which in turn is analogous to I.
  \end{enumerate}
    
\end{proof}

\section{The groups Aut and Out for the Sapphire Sol-manifold}\label{section:sapphires}

Here we follow the classification of the sapphire manifolds given by
Morimoto in~\cite{morimoto}. For each matrix
\begin{equation}\label{gluing_map}
          B=\begin{pmatrix}
                 r & s \\
                 t & u
           \end{pmatrix}\in GL_2(\zz),
\end{equation} 
a $3$-manifold is constructed using a homeomorphism $h$ of the torus
$T$, where $h$ induces on the fundamental group of $T$ the
homomorphism given by the matrix $B$. We know from~\cite[Theorem
  1]{morimoto} when two matrices $B_1, B_2$ provide, up to
homeomorphism, the same manifold, and it follows that we can assume
without loss of generality that $\det(B) = ru-st = 1$. So, from now
on, our matrix $B$ is fixed and of determinant $1$. Also, if the
sapphire has the Sol geometry and is not a torus bundle,
from~\cite{morimoto} we must have $rstu\ne 0$.

Let $E$ be the fundamental group of the sapphire defined by the matrix
$B$. The group $\Aut(E)$ is closely related to $\Aut(E')$, where $E'$
is the fundamental group of a certain torus bundle. More precisely,
by~\cite[Lemma 3.3]{daci-peter} we have the following short exact
sequence where the kernel is characteristic with respect to
automorphism of $E$:
\[
1 \to (\zz\oplus\zz)\Rtimes_{\theta}\zz \to E \to \zz_2 \to 1.
\]
The matrix $\theta=\theta(1)$ is given by
\begin{equation}\label{gluing_map}
          \begin{pmatrix}
                 ru+st & -2rt \\
                 -2su & ru+st
           \end{pmatrix}.
\end{equation}

One presentation of $E$ is
\begin{equation}\label{sapphirepresentation}
E = \!\!\begin{array}[t]{l}
 \langle \,d,b,v,a \mid {}db=bd,\: vdv^{-1} = \theta(d),\: vbv^{-1} = \theta(b), \\
\phantom{\langle \,d,b,v,a \mid {}}a^2=d,\: ab=b^{-1}a,\: ava^{-1} = d^{r-ru-st}b^{s-2su}v^{-1}\,\rangle,
\end{array}
\end{equation}
where $d$ and $b$ are the generators of the subgroup $(\zz\oplus\zz)$
of $(\zz\oplus\zz)\rtimes_\theta\zz$, $v\in
(\zz\oplus\zz)\rtimes_\theta\zz$ is such that its class generates the
quotient $\zz = ((\zz\oplus\zz)\rtimes_\theta\zz)/(\zz\oplus\zz)$, and
$a$ is a remaining generator of $E$ that projects onto the generator
of $\zz_2$.

\begin{lemma} The map $\psi: \Aut(E) \to \Aut((\zz\oplus\zz)\rtimes_{\theta}\zz)$ 
is injective.
\end{lemma}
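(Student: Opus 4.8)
The plan is to exhibit a normal subgroup $N \trianglelefteq E$ that is characteristic with respect to all automorphisms of $E$, such that $N \subseteq (\zz\oplus\zz)\rtimes_\theta\zz$ and $\Aut((\zz\oplus\zz)\rtimes_\theta\zz)$ acts faithfully when restricted to $N$ — but more directly, the cleanest route is to show that any automorphism $f$ of $E$ that restricts to the identity on the characteristic subgroup $G := (\zz\oplus\zz)\rtimes_\theta\zz$ must itself be the identity on $E$. Since $G$ is characteristic (stated just before the lemma, from \cite[Lemma 3.3]{daci-peter}), the map $\psi$ is well defined, and its kernel consists precisely of such $f$; so proving $\ker\psi = 1$ establishes injectivity.

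So suppose $f\in\Aut(E)$ with $f|_G = \id_G$. Using the presentation \eqref{sapphirepresentation}, $E$ is generated by $d,b,v,a$, and the first three lie in $G$, so $f$ fixes $d,b,v$ and the only thing to determine is $f(a)$. Since $f$ induces an automorphism of $E/G \cong \zz_2$ — which has only the trivial automorphism — $f(a)$ projects to the generator of $\zz_2$, hence $f(a) = ag$ for some $g\in G$. First I would impose the relation $a^2 = d$: applying $f$ gives $(ag)^2 = d$, i.e. $a g a^{-1}\, g = 1$ (using $a^2=d$ and that $f$ fixes $d$), so $aga^{-1} = g^{-1}$. Next I would impose $ab = b^{-1}a$ (equivalently $aba^{-1}=b^{-1}$) under $f$: this yields $ag\,b\,g^{-1}a^{-1} = b^{-1}$, and since $b\in\zz\oplus\zz$ is central in $\zz\oplus\zz$ but $g$ need not commute with it, together with $aba^{-1}=b^{-1}$ this forces a commutation condition on $g$ and $b$. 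Similarly, applying $f$ to $ava^{-1} = d^{r-ru-st}b^{s-2su}v^{-1}$ gives a relation $ag\,v\,g^{-1}a^{-1} = d^{r-ru-st}b^{s-2su}v^{-1}$, and comparing with the original relation forces $g$ (conjugated appropriately) to commute with the element $d^{r-ru-st}b^{s-2su}v^{-1}$.

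Combining these constraints, write $g = d^m b^n v^k$. The relation $aga^{-1}=g^{-1}$ together with $ava^{-1}\equiv v^{-1}$ modulo $\zz\oplus\zz$ and $ada^{-1}=d$, $aba^{-1}=b^{-1}$ shows $k$ satisfies $v^k \mapsto v^{-k}$ on the quotient side, forcing $v^{-k} = v^{k}$ after accounting for the $\zz\oplus\zz$ part, hence $k=0$; then $g\in\zz\oplus\zz$ and $aga^{-1}=g^{-1}$ combined with $a(d^mb^n)a^{-1} = d^m b^{-n}$ gives $d^m b^{-n} = d^{-m}b^{-n}$, so $2m=0$, i.e. $m=0$, so $g = b^n$. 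Finally the relation coming from $ava^{-1}$ — where conjugation by $v$ acts on $\zz\oplus\zz$ by the Anosov matrix $\theta$ — forces $b^n$ to be fixed by $\theta$, but $\theta$ Anosov has no nonzero fixed vectors, so $n=0$ and $g=1$, i.e. $f(a)=a$ and $f=\id_E$.

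The main obstacle will be the bookkeeping in the third relation: conjugation by $a$ sends $v$ to a nontrivial coset representative $d^{r-ru-st}b^{s-2su}v^{-1}$, and pushing $g$ past it involves both the $\zz_2$-action (inversion on $b$, fixing $d$) and, after inverting $v$, the inverse Anosov action $\theta^{-1}$ on the $\zz\oplus\zz$ part, so the exponents must be tracked carefully. One should double-check that the constraints from the $a^2=d$ and $ab=b^{-1}a$ relations already kill the $v$-component and the $d$-component of $g$ before invoking the Anosov fixed-point argument, rather than relying on all relations simultaneously; the Anosov condition (no nonzero vector fixed by $\theta$, equivalently $\det(I_2-\theta)\neq 0$) is exactly what rules out the surviving $b$-component. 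Since $E$ is generated by $G$ together with $a$, once $f(a)=a$ is forced we are done.
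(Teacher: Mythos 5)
Your route is genuinely different from the paper's. The paper gets injectivity in one line from the Wells sequence~(\ref{exactwells2}): the kernel $G=(\zz\oplus\zz)\rtimes_\theta\zz$ is characteristic, its centre is trivial because $\theta(1)$ is Anosov, so the group $Z^1(\zz_2,\mathcal{Z}(G))$ of derivations vanishes and $\Aut(E)\to\Aut(\zz_2)\times\Aut(G)$ is injective; since $\Aut(\zz_2)$ is trivial, restriction to $G$ alone is already injective. You instead unwind this by hand from the presentation~(\ref{sapphirepresentation}), which is legitimate and arguably more transparent: the skeleton (write $f(a)=ag$ with $g=d^mb^nv^k\in G$, then kill $k$, $m$, $n$ in turn using the three relations involving $a$, with the Anosov condition $\det(I_2-\theta^k)\ne 0$ for $k\ne 0$ doing the real work) is sound, and your endgame ($g=b^n$, the relation for $ava^{-1}$ forces $\theta(b^n)=b^n$, hence $n=0$) is exactly right. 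What the paper's approach buys is brevity and uniformity; what yours buys is an explicit verification that needs no machinery beyond the presentation.

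However, the step where you obtain $k=0$ is misargued as written. Both $aga^{-1}$ and $g^{-1}$ project to $v^{-k}$ in $G/(\zz\oplus\zz)$, so the relation $aga^{-1}=g^{-1}$ reads $-k=-k$ on the quotient and yields no information there; nothing forces ``$v^{-k}=v^{k}$''. The constraint on $k$ lives entirely in the $\zz\oplus\zz$ component, and you already have what you need to extract it: applying $f$ to $ab=b^{-1}a$ gives $agbg^{-1}a^{-1}=b^{-1}=aba^{-1}$, hence $gbg^{-1}=b$; since $gbg^{-1}=\theta^k(b)$ and $\theta^k-I_2$ is invertible for $k\ne 0$, this forces $k=0$. (Alternatively, $agag=d$ yields both $aga^{-1}=g^{-1}$ and $aga^{-1}=dg^{-1}d^{-1}$, so $g$ commutes with $d$, i.e.\ $\theta^k(d)=d$, with the same conclusion.) With that repair, the remaining steps ($m=0$ from $aga^{-1}=g^{-1}$ on $\zz\oplus\zz$, then $n=0$ from the last relation) go through as you describe, and the proof is complete.
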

\begin{proof}
This follows promptly from the exact sequence given by Theorem \ref {theoremwells} and the fact that the center of
$(\zz\oplus\zz)\rtimes_{\theta}\zz$ is trivial.
\end{proof}

Our task is to decide which automorphisms of
$(\zz\oplus\zz)\rtimes_{\theta}\zz$ extend to the group $E$, which
amounts to deciding which automorphisms of
$(\zz\oplus\zz)\rtimes_{\theta}\zz$ also preserve the last three
relations of~(\ref{sapphirepresentation}). We will consider two types
of automorphisms of $(\zz\oplus\zz)\rtimes_{\theta}\zz$:  Type I
are the automorphisms which induce the identity map on the quotient
$[(\zz\oplus\zz)\rtimes_{\theta}\zz]/(\zz\oplus\zz)=\zz$, and  Type
II are the automorphisms which induce minus the identity map on the
same quotient.

\begin{remark}
The torus bundles $(\zz\oplus\zz)\Rtimes_{\theta}\zz$ where
$\theta(1)$ is given by (\ref{gluing_map}), by Proposition
\ref{eqanti},  always admit automorphisms of Type II since the matrix
of the gluing map of the torus bundles has the two elements of the
diagonal equal.
\end{remark}

We first address the case of Type I maps. In this case it follows from
the classification of the automorphisms, of
$(\zz\oplus\zz)\rtimes_{\theta}\zz$, that a such automorphism
when restricted to $\zz\oplus\zz$ is an automorphism of the form
$\delta M_0^\ell$ for some $\ell\in \zz$, $\delta\in \{\pm1\}$ and
some primitive Anosov matrix $M_0$ given by~\cite[Lemma
  1.7]{sakuma}. Recall that the matrix $B$ used to construct the
Sapphire has $\det(B)=1$. So let $\varphi\in
\Aut((\zz\oplus\zz)\rtimes_\theta\zz)$ be a Type I automorphism that
we attempt to extend to an automorphism of $E$ (that we also call
$\varphi$ by an abuse of notation) by setting
\begin{align}\label{eq:phiattemptI}
\varphi(d^xb^y) &= \delta M_0^\ell(d^xb^y), \quad  \delta \in \{\pm 1\},  \notag\\
\varphi(v) &= d^pb^qv, \\
\varphi(a) &= d^mb^nv^ka \notag,
\end{align}
for certain integers $\ell$, $p$, $q$, $m$, $n$, $k$. The next lemma
answers when such a map preserves the relation $ab=b^{-1}a$.

\begin{lemma}[Fundamental I]\label{lemma:typeIcondition}
Let $\varphi\in \Aut((\zz\oplus\zz)\rtimes_\theta\zz)$ be a Type I
automorphism. If we attempt to extend $\varphi$ to an automorphism of
$E$ as in equation~(\ref{eq:phiattemptI}), then the extension
$\varphi$ satisfies $\varphi(ab) = \varphi(b^{-1}a)$ if, and only if,
$2\ell = k\ell_0$ and $\varepsilon^k\det(M_0)^\ell=1$, where
$\ell_0\in\zz$ is the integer such that $\theta(1) = \varepsilon
M_0^{\ell_0}$, $\varepsilon \in \{\pm 1\}$.
\end{lemma}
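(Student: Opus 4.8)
The plan is to compute both sides of the relation $\varphi(ab)=\varphi(b^{-1}a)$ directly using the definitions in~(\ref{eq:phiattemptI}), reduce everything to a normal form in $(\zz\oplus\zz)\rtimes_\theta\zz$, and then compare the factors. First I would write $\varphi(a)=d^mb^nv^ka$ and $\varphi(b)=\delta M_0^\ell\begin{pmatrix}0\\1\end{pmatrix}$, so that $\varphi(ab)=d^mb^nv^ka\cdot\delta M_0^\ell\begin{pmatrix}0\\1\end{pmatrix}$. Using the relation $ab=b^{-1}a$ in $E$ (equivalently, conjugation by $a$ acts on $\zz\oplus\zz$ by the matrix $\begin{pmatrix}1&0\\0&-1\end{pmatrix}$, and note $a$ also conjugates $v$ in a prescribed way), I would push the element $a$ past the power of the matrix $\delta M_0^\ell\begin{pmatrix}0\\1\end{pmatrix}$, picking up the sign change on the second coordinate. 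On the other side $\varphi(b^{-1}a)=\bigl(\delta M_0^\ell\begin{pmatrix}0\\1\end{pmatrix}\bigr)^{-1}d^mb^nv^ka$, where I would move the inverse lattice element through $d^mb^nv^k$ using the $\theta$-action of $v$.

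The key point is that the resulting comparison will involve the element $v^k$ conjugating a lattice vector: conjugation by $v^k$ acts as $\theta(1)^k=(\varepsilon M_0^{\ell_0})^k=\varepsilon^k M_0^{k\ell_0}$, where I use $\theta(1)=\varepsilon M_0^{\ell_0}$. After collecting the lattice contributions on both sides, the equation $\varphi(ab)=\varphi(b^{-1}a)$ becomes an identity of the form ``two lattice vectors are equal,'' and matching them forces (i) a matrix identity relating $M_0^{2\ell}$ to $M_0^{k\ell_0}$ up to sign, and (ii) a determinant/sign condition coming from the $\delta$ factors and from the action of $a$ on $v$. Since $M_0$ has infinite order, the matrix identity $M_0^{2\ell}=M_0^{k\ell_0}$ (possibly with a sign that the analysis rules out) gives exactly $2\ell=k\ell_0$, and tracking the $\pm1$'s carefully gives $\varepsilon^k\det(M_0)^\ell=1$. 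Conversely, assuming $2\ell=k\ell_0$ and $\varepsilon^k\det(M_0)^\ell=1$, substituting back shows the two sides agree, which establishes the ``if'' direction.

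The main obstacle I anticipate is bookkeeping the interaction between the $v$-conjugation (the $\theta$-action) and the $a$-conjugation on the generator $v$ itself: the relation $ava^{-1}=d^{r-ru-st}b^{s-2su}v^{-1}$ in~(\ref{sapphirepresentation}) means that commuting $a$ past $v^k$ introduces an extra lattice term depending on $r,s,t,u$ and on $k$, and one must check that this extra term is consistent on both sides (it should cancel, or be absorbed, leaving only the two stated conditions). A secondary subtlety is that $\delta=\pm1$ contributes a $\delta^{\text{(something linear in }k,\ell)}$ to the sign equation; one has to be careful that the exponent of $\delta$ is exactly what is needed to rewrite the sign condition in the clean form $\varepsilon^k\det(M_0)^\ell=1$, using $\det(\delta M_0^\ell)=\delta^2\det(M_0)^\ell=\det(M_0)^\ell$. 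Because these computations are of exactly the error-prone kind mentioned in the introduction, I would verify the final lattice identities with a symbolic computation, but the structural conclusion — a matrix equation forcing $2\ell=k\ell_0$ plus a determinant-sign equation — is robust.
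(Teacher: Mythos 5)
Your setup is right as far as it goes: writing both sides in normal form, using that conjugation by $a$ acts on the lattice by $\begin{pmatrix}1&0\\0&-1\end{pmatrix}$ and that conjugation by $v^k$ acts by $\theta^k=\varepsilon^kM_0^{k\ell_0}$, the relation $\varphi(ab)=\varphi(b^{-1}a)$ reduces (after the $d^mb^n$ and the trailing $a$ cancel -- note the relation $ava^{-1}=\dots$ never enters here, since $\varphi(b)$ is a pure lattice element) to the single vector equation
\[
\theta^k\begin{pmatrix}1&0\\0&-1\end{pmatrix}M_0^\ell\begin{pmatrix}0\\1\end{pmatrix}=-M_0^\ell\begin{pmatrix}0\\1\end{pmatrix},
\]
the $\delta$'s cancelling as you predicted. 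The genuine gap is in the next step: you assert that ``matching two lattice vectors forces a matrix identity relating $M_0^{2\ell}$ to $M_0^{k\ell_0}$ up to sign.'' It does not. Equality of two vectors is two scalar equations, i.e.\ the statement that a certain matrix $N$ fixes the specific vector $\begin{pmatrix}0\\1\end{pmatrix}$; it is not the four scalar equations $N=I_2$, and the whole content of the lemma is extracting the exponent condition $2\ell=k\ell_0$ and the sign condition $\varepsilon^k\det(M_0)^\ell=1$ from that single eigenvector statement. This is where the paper spends essentially all of its effort: it observes $\det(N)=\det(\theta)^k=1$, so the eigenvalue $1$ forces $N$ to be unipotent of the form $\begin{pmatrix}1&0\\e&1\end{pmatrix}$; it then conjugates by a matrix $P$ diagonalizing $M_0$, using the special fact (coming from the equal diagonal entries of $\theta$) that $P=\begin{pmatrix}x&z\\y&w\end{pmatrix}$ satisfies $xw+yz=0$, so that $P^{-1}\begin{pmatrix}1&0\\0&-1\end{pmatrix}P$ is anti-diagonal and $P^{-1}NP$ is diagonal; comparing with $P^{-1}\begin{pmatrix}1&0\\e&1\end{pmatrix}P$ forces $e=0$ and yields $\lambda_1^\ell\lambda_2^{k\ell_0-\ell}=\lambda_2^\ell\lambda_1^{k\ell_0-\ell}=\pm1$, whence the Anosov condition $|\lambda_i|\ne1$, $|\lambda_1\lambda_2|=1$ gives $k\ell_0-2\ell=0$ and then $\varepsilon^k(\lambda_1\lambda_2)^\ell=\varepsilon^k\det(M_0)^\ell=1$.

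So the missing idea is precisely the mechanism that upgrades ``$N$ fixes $\begin{pmatrix}0\\1\end{pmatrix}$'' to ``$N=I_2$'': some combination of the determinant computation, the special eigenvector structure forced by $\theta$ having equal diagonal entries, and the Anosov property that no nontrivial power of $M_0$ has $\pm1$ as an eigenvalue. Without one of these inputs the bookkeeping you describe terminates at an underdetermined vector equation, and ``$M_0$ has infinite order'' alone is not enough to conclude $2\ell=k\ell_0$. (Your secondary worries -- the $ava^{-1}$ relation and the $\delta$-exponents -- are in fact non-issues for this particular relation, which is further evidence that the computation was not carried far enough to locate the real difficulty.)
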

\begin{proof}
Let $P = \begin{pmatrix}x & z\\ y & w\end{pmatrix}\in SL_2(\rr)$ be a
  matrix such that $P^{-1}M_0 P$ is diagonal. We claim that $xw+yz=0$
  (which readily implies $xw=1/2$ and $yz=-1/2$). One way to see this
  is the following: if $P^{-1}M_0P$ is diagonal, then so is
  $P^{-1}\theta P$. Now a simple computation of the eigenvectors of
  $\theta$ (observing that the diagonal entries of $\theta$ are equal)
  yields the claim.

We have
\begin{gather*}
\varphi(ab) = \varphi(b^{-1}a) \Iff\\
d^mb^nv^ka\begin{pmatrix}\delta & 0\\ 0 & \delta\end{pmatrix}M_0^\ell(b) = \begin{pmatrix}\delta & 0\\ 0 & \delta\end{pmatrix}M_0^\ell(b^{-1})d^mb^nv^ka \Iff\\
v^ka\begin{pmatrix}\delta & 0\\ 0 & \delta\end{pmatrix}M_0^\ell(b) = \begin{pmatrix}\delta & 0\\ 0 & \delta\end{pmatrix}M_0^\ell(b^{-1})v^ka \Iff\\
\theta^k\begin{pmatrix}\delta & 0 \\ 0 & -\delta\end{pmatrix}M_0^\ell\begin{pmatrix}0 \\ 1\end{pmatrix} = \begin{pmatrix}\delta & 0\\ 0 & \delta\end{pmatrix}M_0^\ell\begin{pmatrix}1 & 0 \\ 0 & -1\end{pmatrix}\begin{pmatrix}0 \\ 1\end{pmatrix} \Iff\\
\left[\begin{pmatrix}1 & 0 \\ 0 & -1\end{pmatrix}M_0^{-\ell}\begin{pmatrix}\delta & 0\\ 0 & \delta\end{pmatrix}\theta^k\begin{pmatrix}\delta & 0 \\ 0 & -\delta\end{pmatrix}M_0^\ell\right]\begin{pmatrix}0 \\ 1\end{pmatrix} = \begin{pmatrix}0 \\ 1\end{pmatrix}.
\end{gather*}
Let $N = \begin{pmatrix}1 & 0 \\ 0 & -1\end{pmatrix}M_0^{-\ell}\begin{pmatrix}\delta & 0\\ 0 & \delta\end{pmatrix}\theta^k\begin{pmatrix}\delta & 0 \\ 0 & -\delta\end{pmatrix}M_0^\ell$.
We have $\det(N) = \det(\theta)^k = 1^k = 1$ and, since $1$ is an
eigenvalue of $N$, the characteristic polynomial of $N$ is
$(\lambda-1)^2$. We also have that $\begin{pmatrix}0
  \\ 1\end{pmatrix}$ is an eigenvector corresponding to the eigenvalue
  $1$, so we actually have
\[
N = \begin{pmatrix}
1 & 0 \\
e & 1
\end{pmatrix}
\]
for some $e\in\zz$. Hence we can write
\[
\begin{pmatrix}1 & 0 \\ 0 & -1\end{pmatrix}M_0^{-\ell}\begin{pmatrix}\delta & 0\\ 0 & \delta\end{pmatrix}\theta^k\begin{pmatrix}\delta & 0 \\ 0 & -\delta\end{pmatrix}M_0^\ell = \begin{pmatrix}1 & 0 \\e & 1\end{pmatrix}.
\]
Now let $P = \begin{pmatrix}x & z\\ y & w\end{pmatrix}\in SL_2(\rr)$
  be such that $P^{-1}M_0P = \begin{pmatrix}\lambda_1 & 0 \\ 0 &
    \lambda_2\end{pmatrix}$. Multiplying the equation above by
    $P^{-1}$ on the left and by $P$ on the right, we get
\[
\varepsilon^k\begin{pmatrix}
\lambda_1^\ell\lambda_2^{k\ell_0-\ell} & 0 \\
0 & \lambda_2\lambda_1^{k\ell_0-\ell} \\
\end{pmatrix} = 
\begin{pmatrix}
(1-exz) & -ez^2\\
ex^2 & (1+exz)
\end{pmatrix},
\]
which implies $e=0$ and $\lambda_1^\ell\lambda_2^{k\ell_0-\ell} =
\lambda_2^\ell\lambda_1^{k\ell_0-\ell}=\pm 1$. But $M_0$ is an Anosov matrix,
so $|\lambda_1| \ne 1 \ne |\lambda_2|$ and $|\lambda_1\lambda_2| =
1$. Since we have
\[
|\lambda_1^\ell\lambda_2^{k\ell_0-\ell}|=1 \Rightarrow |\lambda_2^{k\ell_0-2\ell}|=1,
\]
we get $k\ell_0-2\ell = 0$. Now, from the above matrix equality, we
must have $\varepsilon^k(\lambda_1\lambda_2)^\ell = 1 \iff
\varepsilon^k\det(M_0)^\ell =1$.
\end{proof}

Next we look at the other two relations that $\varphi$ in
equation~(\ref{eq:phiattemptI}) must satisfy to be an element of
$\Aut(E)$. We first analyze the case $k$ even, starting when $k=0$.
Still using the notation of equation~(\ref{eq:phiattemptI}), let's
denote by $\Aut_0(E)$ the automorphisms of $E$ that induce a type I
automorphism of $(\zz\oplus \zz)\rtimes_\theta \zz$, and let
\begin{equation}\label{eq:aut0k}
\Aut_0^k(E) = \{\varphi\in \Aut_0(E) \:\mid\: \varphi(a) = d^mb^nv^ka \text{ for some $m$, $n\in\zz$}\}.
\end{equation}
It is easy to see that $\varphi_{k_1}\in \Aut_0^{k_1}(E)$,
$\varphi_{k_2}\in \Aut_0^{k_2}(E) \then \varphi_{k_1}\varphi_{k_2}\in
\Aut_0^{k_1+k_2}(E)$ and $\varphi_k\in \Aut_0^k(E)\then \varphi_k^{-1}
\in \Aut_0^{-k}(E)$, hence $\Aut_0^0(E)$ is a normal subgroup of
$\Aut_0(E)$. It is this subgroup that we describe now. By
Lemma~\ref{lemma:typeIcondition}, we know that $k=0$ implies $\ell=0$.

\begin{proposition}\label{proposition:G0}
An element $\varphi\in \Aut_0^0(E)$ has one of the following two
forms:
\begin{enumerate}
\item
  \begin{align*}
    \varphi(d^xb^y) &= d^xb^y, \\
    \varphi(a) &= b^na, \\
    \varphi(v) &= d^{rtn+ct}b^{-stn-cu}v,
  \end{align*}
  where $c$, $n$ are any integers.
\item
  \begin{align*}
    \varphi(d^xb^y) &= d^{-x}b^{-y}, \\
    \varphi(a) &= d^{-1}b^na, \\
    \varphi(v) &= d^{r(tn-1+u)+ct}b^{-s(tn-1+u)-cu}v,
  \end{align*}
  where $c$, $n$ are any integers.
\end{enumerate}
\end{proposition}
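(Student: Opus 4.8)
The plan is to start from the general form~(\ref{eq:phiattemptI}) with $k=0$, which by Lemma~\ref{lemma:typeIcondition} forces $\ell=0$, so the restriction of $\varphi$ to $\zz\oplus\zz$ is simply multiplication by $\delta\in\{\pm1\}$. Thus $\varphi(d^xb^y)=\delta(d^xb^y)$, $\varphi(v)=d^pb^qv$, $\varphi(a)=d^mb^na$ for integers $p,q,m,n$, and it remains to impose the three ``extra'' relations of~(\ref{sapphirepresentation}): $a^2=d$, $ab=b^{-1}a$, and $ava^{-1}=d^{r-ru-st}b^{s-2su}v^{-1}$. First I would use the relation $ab=b^{-1}a$: Lemma~\ref{lemma:typeIcondition} already guarantees it holds when $k=0,\ell=0$ regardless of $\delta$, so this imposes no new constraint (a quick direct check confirms $\varphi(ab)=\varphi(b^{-1}a)$ in both sign cases). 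Next I would apply $\varphi$ to $a^2=d$: since $\varphi(a)=d^mb^na$ and $a d^mb^n a^{-1}=d^{-m}b^n$ (from $ad=da$, $ab^na^{-1}=b^{-n}$, using $a^2=d$ central in the relevant sense), we get $\varphi(a)^2=d^mb^n\cdot d^{-m}b^n\cdot a^2=b^{2n}d$; this must equal $\varphi(d)=\delta d=d^{\delta}$. Comparing exponents of $b$ forces $n$ free but the exponent of $d$ gives: for $\delta=1$, $d^{1}=b^{2n}d$ hence $n$ arbitrary — wait, that leaves $b^{2n}=1$, impossible unless $n=0$; so I must be more careful with how $a^2=d$ interacts, and in fact the correct computation (keeping $a^2=d$ and pushing $d^mb^n$ past the second $a$) yields the exponent-of-$d$ condition that distinguishes the two cases: $\delta=1\Rightarrow m=0$ (form 1), $\delta=-1\Rightarrow$ the shift by $d^{-1}$, i.e. $m=-1$ (form 2), with $n$ free in both.

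Once $\delta$, $m$, $n$ are pinned down, the third relation $ava^{-1}=d^{r-ru-st}b^{s-2su}v^{-1}$ is what determines $p$ and $q$ in terms of $n$ and the free parameter $c$. I would compute $\varphi(a)\varphi(v)\varphi(a)^{-1}$ and $\varphi(d^{r-ru-st}b^{s-2su})\varphi(v)^{-1}$ using the identities $ava^{-1}=d^{r-ru-st}b^{s-2su}v^{-1}$, $vdv^{-1}=\theta(d)$, $vbv^{-1}=\theta(b)$, together with the explicit entries of $\theta$ from~(\ref{gluing_map}), namely $\theta=\begin{pmatrix}ru+st&-2rt\\-2su&ru+st\end{pmatrix}$. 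Setting the two sides equal gives a linear system in $p,q$ whose general solution is an affine line; choosing the free parameter to be $c$, one reads off $\varphi(v)=d^{rtn+ct}b^{-stn-cu}v$ in case 1 and $\varphi(v)=d^{r(tn-1+u)+ct}b^{-s(tn-1+u)-cu}v$ in case 2. Conversely I would check that for every $c,n\in\zz$ the assignments listed in the statement do define an automorphism of $E$: injectivity and surjectivity follow because the map is an isomorphism on the characteristic subgroup $(\zz\oplus\zz)\Rtimes_\theta\zz$ (it sends $d,b$ to $\pm d,\pm b$ and $v$ to $d^\ast b^\ast v$) and sends $a$ to $d^\ast b^\ast a$, so it is a bijection of the generators respecting all relations, hence an automorphism — and it lies in $\Aut_0^0(E)$ by construction.

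The main obstacle I expect is bookkeeping in the third relation: one must carefully conjugate $d^pb^qv$ by $d^mb^na$ using both $ava^{-1}=d^{r-ru-st}b^{s-2su}v^{-1}$ and the $\theta$-action, and the appearance of the coefficients $r,s,t,u$ (subject to $ru-st=1$) makes it easy to slip a sign or swap a row. The identity $xw+yz=0$ noted in the proof of Lemma~\ref{lemma:typeIcondition} (equivalently, the symmetry of $\theta$ about its diagonal) is the structural fact that keeps the $v$-exponents in the clean form shown; I would invoke the relation $ru-st=1$ repeatedly to simplify, and — as the authors remark regarding Section~\ref{section:sapphires} — it is prudent to verify the final exponent formulas by a computer algebra check rather than trust a hand computation. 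Everything else (the $a^2=d$ and $ab=b^{-1}a$ relations) is short and forces exactly the dichotomy $\delta=\pm1$ recorded in the two cases.
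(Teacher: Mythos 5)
Your plan is the same as the paper's: use $a^2=d$ to pin down $m$ and the sign $\delta$, then solve the linear system coming from $ava^{-1}=d^{r-ru-st}b^{s-2su}v^{-1}$ for $p,q$; the paper reduces those systems to the scalar equations $up+tq=tn$ (case $m=0$) and $up+tq=tn-1+u$ (case $m=-1$), whose general integer solutions are exactly the exponents in the statement. The one step you flag but do not actually repair is the $a^2=d$ computation, and the slip is in your conjugation formula: since $d=a^2$, the element $a$ commutes with $d$, and $aba^{-1}=b^{-1}$, so $ad^mb^na^{-1}=d^mb^{-n}$ (not $d^{-m}b^n$); hence $\varphi(a)^2=d^mb^n\cdot d^mb^{-n}\cdot a^2=d^{2m+1}$, and equating this with $\varphi(d)=d^{\pm1}$ gives $2m+1=\pm1$, i.e.\ $m=0$ or $m=-1$ with $n$ unconstrained --- exactly the dichotomy you asserted. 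With that local repair your argument coincides with the paper's proof (the paper does not bother with the converse verification you sketch, which is a harmless addition).
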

\begin{proof}
  The relation $a^2=d$ yields $d^{2m+1} = d^{\pm 1}$, hence $m=0$ or
  $m=-1$ according to the sign. In the first case, corresponding to
  $m=0$, the relation $ava^{-1} = d^{r-ru-st}b^{s-2su}v^{-1}$ yields
  the linear system (in the unknowns $p$, $q$)
  \[
  \left(\theta^{-1}+ \begin{pmatrix}1&0\\0&-1\end{pmatrix}\right)\begin{pmatrix}p\\q\end{pmatrix} =
      (\theta^{-1}-I_2)\begin{pmatrix}0\\n\end{pmatrix},
  \]
  which is equivalent to $up+tq = tn$ and has $p = rtn+ct$, $q =
  -stn-cu$ as the general integer solution.

  In the second case, corresponding to $m=-1$, the linear system obtained is
  \[
  \left(\theta^{-1}+ \begin{pmatrix}1&0\\0&-1\end{pmatrix}\right)\begin{pmatrix}p\\q\end{pmatrix} =
      -2\begin{pmatrix}r-ru-st\\s-2su\end{pmatrix}+(\theta^{-1}-I_2)\begin{pmatrix}-1\\n\end{pmatrix},
  \]
  which is equivalent to $up+tq = nt-1+u$ and has $p = r(tn-1+u)+ct$, $q =
  -s(tn-1+u)-cu$ as the general integer solution.
\end{proof}

In the previous proposition, an element $\varphi \in \Aut_0^0(E)$ of
the first type is entirely determined by the integers $c$ and $n$. It
is immediate to check that the map
\begin{align*}
  \Xi \colon \zz\oplus\zz & \to \Aut_0^0(E) \\
  (n,c) &\mapsto \varphi\colon E \to E \\
  &\phantom{\mapsto \varphi\colon} \varphi(d^xb^y) = d^xb^y \\
  &\phantom{\mapsto \varphi\colon} \varphi(a) = b^na \\
  &\phantom{\mapsto \varphi\colon} \varphi(v) = d^{rtn+ct}b^{-stn-cu}v
\end{align*}
is an injective group homomorphism.

\begin{theorem}\label{theorem:aut00}
  There is a split exact sequence
  \[
  \begin{tikzcd}
    0 \ar[r] & \zz\oplus\zz \ar[r,"\Xi"] & \Aut_0^0(E) \ar[r] & \zz_2 \ar[r] & 0
  \end{tikzcd}
  \]
  and $\Aut_0^0(E) \cong (\zz\oplus\zz)\rtimes_{-I_2}\zz_2$.
\end{theorem}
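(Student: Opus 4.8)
The plan is to exhibit an explicit splitting of the sequence and then identify the action. From Proposition~\ref{proposition:G0} we already know that every $\varphi\in\Aut_0^0(E)$ is one of two explicit shapes; the first shape is exactly the image of $\Xi$, so $\Xi(\zz\oplus\zz)$ consists of the automorphisms inducing $\id$ on $\zz\oplus\zz$, while the second shape consists of those inducing $-I_2$. Thus the map $\Aut_0^0(E)\to\zz_2$ sending $\varphi$ to $0$ or $1$ according to whether $\varphi|_{\zz\oplus\zz}$ is $I_2$ or $-I_2$ is a well-defined surjective homomorphism (it is the restriction of $\psi$ composed with $\det$ or, more directly, reading off the sign in the formula $\varphi(d^xb^y)=d^{\pm x}b^{\pm y}$), and its kernel is precisely $\im\Xi$. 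Since $\Xi$ is an injective homomorphism (as noted just before the statement), this gives the short exact sequence.

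Next I would produce the section. Take $\rho\in\Aut_0^0(E)$ to be the second-type automorphism with $n=c=0$, i.e.
\[
\rho(d^xb^y)=d^{-x}b^{-y},\quad \rho(a)=d^{-1}a,\quad \rho(v)=d^{r(u-1)}b^{-s(u-1)}v.
\]
One must check $\rho^2=\id_E$: this is a direct computation using the presentation~(\ref{sapphirepresentation}) — applying $\rho$ twice to each of $d,b,v,a$ and simplifying with the relations $a^2=d$, $ab=b^{-1}a$, $ava^{-1}=d^{r-ru-st}b^{s-2su}v^{-1}$ and $vdv^{-1}=\theta(d)$, $vbv^{-1}=\theta(b)$. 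Granting $\rho^2=\id_E$, the subgroup $\langle\rho\rangle\cong\zz_2$ maps isomorphically onto the quotient $\zz_2$, so the sequence splits.

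Finally I would compute the action of $\rho$ on $\im\Xi$ by conjugation. For $\varphi=\Xi(n,c)$ one computes $\rho\varphi\rho^{-1}=\rho\varphi\rho$ on the generators $d,b,v,a$; since $\rho$ negates the $\zz\oplus\zz$ part and the formula for $\varphi$ on $a$ and $v$ is ``linear'' in $(n,c)$, the conjugate should come out to be $\Xi(-n,-c)$. Concretely: $\rho\varphi\rho(d^xb^y)=d^xb^y$; $\rho\varphi\rho(a)=\rho\varphi(d^{-1}a)=\rho(d^{-1}b^na)=d\,b^{-n}d^{-1}a=b^{-n}a$ (using $db=bd$ and $d^{-1}a=ad$ from $a^2=d$, so that the $d$'s cancel appropriately); and similarly the $v$-component negates. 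Hence conjugation by $\rho$ acts on $\zz\oplus\zz=\im\Xi$ as $-I_2$, giving $\Aut_0^0(E)\cong(\zz\oplus\zz)\rtimes_{-I_2}\zz_2$.

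The main obstacle is purely computational: verifying $\rho^2=\id_E$ and the conjugation formula $\rho\Xi(n,c)\rho=\Xi(-n,-c)$ both require careful bookkeeping with the noncommutative relations in~(\ref{sapphirepresentation}), in particular keeping track of how $v$ conjugates the $\zz\oplus\zz$ generators via $\theta$ and how $a$ interacts with $d$ and $b$; this is the kind of identity the authors mention checking with Maxima. Everything else — well-definedness of the map to $\zz_2$, exactness, and the splitting — is formal once these two identities are in hand.
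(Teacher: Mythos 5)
Your proposal follows essentially the same route as the paper's proof: define the map to $\zz_2$ by the sign of $\varphi|_{\zz\oplus\zz}$, split it with the order-two involution $\rho$ of the second type (your $\rho$ with $n=c=0$ is exactly the one the paper uses, since $b^{-s(u-1)}=b^{s(1-u)}$), and verify by direct computation that conjugation by $\rho$ sends $\Xi(n,c)$ to $\Xi(-n,-c)$. The only cosmetic difference is that the paper asserts $\rho^2=1_E$ for every second-type element rather than just the chosen section, which changes nothing.
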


\begin{proof}
  The homomorphism $\Aut_0^0(E) \to \zz_2$ is defined by mapping to
  the generator of $\zz_2$ exactly those automorphisms $\varphi$ such
  that $\varphi(d^xb^y) = d^{-x}b^{-y}$, so the existence of the short
  exact sequence is clear. Also, a simple calculation shows that any
  $\rho\in \Aut_0^0(E)$ that satisfies $\rho(d^xb^y) = d^{-x}b^{-y}$
  also satisfies $\rho^2=1_E$, so the sequence splits. In particular,
  for
  \begin{align*}
    \rho(d^xb^y) &= d^{-x}b^{-y} & \varphi(d^xb^y) &= d^xb^y \\
    \rho(a) &= d^{-1}a & \varphi(a) &= b^na \\
    \rho(v) &= d^{r(u-1)}b^{s(1-u)}v & \varphi(v) &= d^{rtn+ct}b^{-stn-cu}v, \\
  \end{align*}
  we have
  \begin{align*}
    \rho\varphi\rho(d^xb^y) &= d^xb^y, \\
    \rho\varphi\rho(a) &= b^{-n}a, \\
    \rho\varphi\rho(v) &= d^{-rtn-ct}b^{stn+cu}v,
  \end{align*}
  so the map $\zz_2 \to \Aut(\zz\oplus\zz)$ takes the generator of
  $\zz_2$ to the matrix $-I_2$.
\end{proof}

Now we investigate the cosets $\Aut_0^k(E)$ of $\Aut_0^0(E)$ in
$\Aut_0(E)$. The next result shows that $\Aut_0^k(E)\ne\emptyset$
when $k$ is even.

\begin{theorem}
If $k=2$, an extension $\varphi$ as in equation~(\ref{eq:phiattemptI})
exists.
\end{theorem}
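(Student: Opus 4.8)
The plan is to construct an explicit automorphism $\varphi$ of $E$ of the form~(\ref{eq:phiattemptI}) with $k=2$. First I would use Lemma~\ref{lemma:typeIcondition}: with $k=2$ the condition $2\ell = k\ell_0$ forces $\ell = \ell_0$, and $\varepsilon^k\det(M_0)^\ell = \varepsilon^2\det(M_0)^{\ell_0} = \det(M_0)^{\ell_0} = \det(\theta(1))\cdot(\text{sign adjustment})$; since $\det(\theta(1))=1$ we need $\delta$ chosen so that the determinant relation from~(\ref{eq:phiattemptI}) is consistent — concretely I would try $\delta M_0^{\ell_0}$ as the action on $\zz\oplus\zz$, which is precisely $\pm\theta(1)$, and check that $ab = b^{-1}a$ is preserved, which is guaranteed by the Lemma.

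The main work is then to verify the remaining two relations $a^2 = d$ and $ava^{-1} = d^{r-ru-st}b^{s-2su}v^{-1}$. For $a^2=d$: with $\varphi(a) = d^mb^nv^2a$ we compute $\varphi(a)^2 = d^mb^nv^2a\,d^mb^nv^2a$. Using $a d^xb^y a^{-1}$ (which is read off from the relations $a^2=d$, $ab=b^{-1}a$) and $a v a^{-1} = d^{r-ru-st}b^{s-2su}v^{-1}$, I would push the second $a$ to the right and collect the $\zz\oplus\zz$-part and the $v$-part; the $v$-exponents must cancel ($v^2 \cdot v^{-2}$ coming through the conjugation by $a$) and the abelian part must equal $\varphi(d) = \delta M_0^{\ell_0}(d) = \pm\theta(1)(d)$. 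This yields a linear system in $(m,n)$ over $\zz$; I expect it to be solvable because $I_2 - \theta(1)$ (or a closely related matrix built from $\theta^{\pm1}$ and $\mathrm{diag}(1,-1)$) has the same structure exploited in Proposition~\ref{proposition:G0}, where the analogous systems for $k=0$ were shown to have integer solutions.

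For the relation $ava^{-1} = d^{r-ru-st}b^{s-2su}v^{-1}$: applying $\varphi$ to both sides, the left side becomes $\varphi(a)\varphi(v)\varphi(a)^{-1}$ with $\varphi(v) = d^pb^qv$, and the right side becomes $\delta M_0^{\ell_0}\begin{pmatrix}r-ru-st\\ s-2su\end{pmatrix}$ times $\varphi(v)^{-1}$. Expanding the left side using the same conjugation rules gives another equation whose $v$-exponent part is automatically satisfied (both sides have $v$-exponent $-1$ once the $v^2$ from $\varphi(a)$ is accounted for) and whose abelian part is a second linear system in $(p,q)$; since $p,q$ appear linearly and the coefficient matrix is $\theta^{-1} + \mathrm{diag}(1,-1)$ (as in Proposition~\ref{proposition:G0}, whose rank-one structure I would reuse), there is a one-parameter family of integer solutions. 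Finally I would confirm $\varphi$ is a bijection: it is clearly surjective since $d,b,v,a$ are all in the image (the matrix $\delta M_0^{\ell_0}$ is invertible, $v$ is recovered from $\varphi(v)$ modulo $\zz\oplus\zz$, and $a$ modulo lower terms), and a homomorphic surjection of this finitely generated group onto itself that respects the characteristic filtration is an isomorphism.

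The hardest part will be the bookkeeping in the relation $a^2=d$, where conjugating $v^2$ past $a$ introduces $\theta$-twisted terms and one must track that the $v$-powers cancel exactly while the abelian residue matches $\pm\theta(1)(d)$; this is exactly the kind of identity the authors note they verified with Maxima. I would organize it by first recording once and for all the rules $a(d^xb^y)a^{-1} = d^{x}b^{-y}$ (from $ab=b^{-1}a$, $a^2=d$ central-ish) and $a v^j a^{-1} = (d^{r-ru-st}b^{s-2su})^{?}v^{-j}$ with the correct $\theta$-twist, then substitute mechanically.
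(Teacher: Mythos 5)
Your overall strategy is the same as the paper's: impose the three relations of the presentation~(\ref{sapphirepresentation}), reduce each to a linear system over $\zz$, and solve. But the heart of the theorem is precisely the step you wave at with ``I expect it to be solvable,'' and that expectation is not justified by the analogy you invoke. For $k=2$ the relation $a^2=d$ produces the system
\[
\left(I_2+\theta^2\begin{pmatrix}1 & 0\\0&-1\end{pmatrix}\right)\begin{pmatrix}m\\n\end{pmatrix} +
(\theta+\theta^2)\begin{pmatrix}r-ru-st\\s-2su\end{pmatrix} = (\varepsilon \theta-I_2)\begin{pmatrix}1\\0\end{pmatrix},
\]
whose coefficient matrix is \emph{singular}, so existence of even a rational solution requires verifying a compatibility condition on the right-hand side; in the paper this condition evaluates to $2(d-1)s(2st-1)$ with $d=ru-st$, and it vanishes only because $\det(B)=1$. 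After that one still needs \emph{integer} solutions, which rests on the arithmetic fact $\gcd(2st+1,2rt)=1$. Neither step is ``the same structure'' as Proposition~\ref{proposition:G0}: the matrices and right-hand sides are different, and the vanishing of the obstruction is a genuine identity to be checked, not a formal consequence.

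The same gap recurs, worse, in your treatment of the third relation. You assert the coefficient matrix is $\theta^{-1}+\mathrm{diag}(1,-1)$ and that you can ``reuse the rank-one structure''; in fact it is $\theta^2\,\mathrm{diag}(1,-1)+\theta^{-1}$, again singular, and the compatibility condition now involves the particular $(m,n)$ found in the previous step --- the paper must verify that a specific polynomial expression factors with $(ru-st-1)$ as a factor, and then prove $\gcd\bigl(u(4ru-3),\,t(4ru-1)\bigr)=1$ by a divisibility chain to get integrality. These verifications are the actual content of the proof; without them you have only shown that \emph{if} the two singular systems happen to be consistent over $\zz$, then $\varphi$ exists. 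Two smaller points: your handling of $\varepsilon=-1$ by composing with an element of $\Aut_0^0(E)$ matches the paper and is fine, and your Hopfian-type argument for bijectivity is acceptable (the paper leaves this implicit), but neither compensates for the missing solvability arguments.
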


\begin{proof}
If the extension $\varphi$ exists, we have $\ell=\ell_0$, $M_0^\ell =\varepsilon\theta$ ($\varepsilon=\pm 1$) and the
relation $a^2=d$ gives us
\begin{gather*}
d^mb^nv^2ad^mb^nv^2a = \varepsilon\theta(d) \iff \\
d^mb^nv^2d^mb^{-n}(ava^{-1})^2a^2 = \varepsilon\theta(d) \\
d^mb^n\theta^2(d^mb^{-n})v^2(d^{r-ru-st}b^{s-2su}v^{-1})^2d = \varepsilon\theta(d) \\
\left(I_2+\theta^2\begin{pmatrix}1 & 0\\0&-1\end{pmatrix}\right)\begin{pmatrix}m\\n\end{pmatrix} +
(\theta+\theta^2)\begin{pmatrix}r-ru-st\\s-2su\end{pmatrix} = (\varepsilon \theta-I_2)\begin{pmatrix}1\\0\end{pmatrix}.
\end{gather*}
We put the above system (with unknowns $m$, $n$) in the matrix form
$A\begin{pmatrix}m\\n\end{pmatrix} = \begin{pmatrix}b_1 \\ b_2\end{pmatrix}$ and analyze two cases,
corresponding to $\varepsilon = \pm1$. In both cases, we have
\[
A = \begin{pmatrix}
  (ru+st)^2+4rstu+1 & 4rt(ru+st) \\
  -4su(ru+st) & -(ru+st)^2-4rstu+1
\end{pmatrix},
\]
and note that $\det(A)=0$. Hence the system will have (rational)
solutions if, and only if, $A_{11}b_2-A_{21}b_1=0$. Now we write
$d=ru-st=1$ and notice that, in both cases, corresponding to the
$+$ or $-$ signs, we have
\[
A_{11}b_2-A_{21}b_1 = 2(d-1)s(2st-1) = 0.
\]
Assuming $\varepsilon=1$, the linear system is equivalent to
\[
(2st+1)m+(2rt)n = 1-r,
\]
which has integer solutions since $\gcd(2st+1,2rt)=1$. The general
integer solution $(m,n)$ is given by
\begin{align*}
m &= (ru+st-r)+c(2rt)\\
n &= (s-2su)-c(2st+1).
\end{align*}

Still assuming $\varepsilon=1$, we now consider the relation $ava^{-1}
= d^{r-ru-st}b^{s-2su}v^{-1}$, which yields
\[
d^mb^nv^2ad^pb^qva^{-1}v^{-2}d^{-m}b^{-n} = \theta(d^{r-ru-st}b^{s-2su})v^{-1}d^{-p}b^{-q},
\]
from which we obtain the following linear system in the unknowns $p$ and $q$:
\[
\left(I_2-\theta^{-1}\right)\begin{pmatrix}m\\n\end{pmatrix} +
\left(\theta^2\begin{pmatrix}1&0\\0&-1\end{pmatrix} + \theta^{-1}\right)
\begin{pmatrix}p\\q\end{pmatrix} =
(\theta-\theta^2)\begin{pmatrix}r-ru-st\\s-2su\end{pmatrix}.
\]
Writing the above system in the matrix form
$C\begin{pmatrix}p\\q\end{pmatrix}=\begin{pmatrix}e_1 \\ e_2 \end{pmatrix}$, we have $\det(C)=0$ and hence
the system has rational solutions if, and only if,
$C_{11}e_2-C_{21}e_1 = 0$. We get
\begin{align*}
  C_{11}e_2-C_{21}e_1 &= (8s^2t^2+10st+2)(2msu-2su+4s^2t+2nst+2s) - \\
  & ((2-4r)st+2mst+2nrt)(-8s^2t-2s)u.
\end{align*}
Substituting the expressions for $m$ and $n$ in the above expression,
it factors as $4s(4st + 1)(u + 2cst^2 - 3st + ct - 1)(ru - st - 1) =
0$, so the linear system has rational solutions and is equivalent to
\begin{gather*}
(8s^2t^2+10st+2)p+(8rst^2+6rt)q = t(-4rs-2cr) \\
u(4ru-3)p+t(4ru-1)q = -t(2s+c),
\end{gather*}
that has integer solutions for any $c\in\zz$ since
$\gcd(u(4ru-3),t(4ru-1))=1$. In fact
\begin{gather*}
  \left|
  \begin{array}{l}
    d\mid 4ru^2-3u\\
    d\mid 4rut-t
  \end{array}
  \right.\then
  \left|
  \begin{array}{l}
    d\mid 4ru^2t-3ut\\
    d\mid 4rut-t
  \end{array}
  \right.\then
  \left|
  \begin{array}{l}
    d\mid 2ut\\
    d\mid 4rut-t
  \end{array}
  \right.\then
  \left|
  \begin{array}{l}
    d\mid 4rut\\
    d\mid 4rut-t
  \end{array}
  \right.\then \\
  \left|
  \begin{array}{l}
    d\mid t\\
    d\mid u(4ru-3)
  \end{array}
  \right.\stackrel{\gcd(u,t)=1}{\then}
  \left|
  \begin{array}{l}
    d\mid t\\
    d\mid 4ru-3
  \end{array}
  \right.\then
  \left|
  \begin{array}{l}
    d\mid t\\
    d\mid 4(1+st)-3 = 4st+1
  \end{array}
  \right.\then
  d\mid 1.
\end{gather*}

Finally, in the case $\varepsilon=-1 \iff M_0^\ell = -\theta$, we also
have extensions $\varphi$ since they are all obtained by composing
with an appropriate element of $\Aut_0^0(E)$.
\end{proof}

The investigation of the cosets $\Aut_0^k(E)$ for $k$ odd is greatly
simplified now that the even case is understood. For $k$ odd, we have
$\Aut_0^k(E)\ne\emptyset \iff \Aut_0^1(E)\ne\emptyset$ We start with a
very simple lemma.

\begin{lemma}
If neither $\theta(1)$ nor $-\theta(1)$ has a square root, there is no
automorphism $\varphi$ with $k$ odd.
\end{lemma}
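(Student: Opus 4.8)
The plan is to prove the contrapositive: if there is an automorphism $\varphi$ as in equation~(\ref{eq:phiattemptI}) with $k$ odd, then $\theta(1)$ or $-\theta(1)$ has a square root in $GL_2(\zz)$. The starting point is Lemma~\ref{lemma:typeIcondition} (Fundamental I), which tells us that whenever such a $\varphi$ exists we must have $2\ell = k\ell_0$ and $\varepsilon^k\det(M_0)^\ell = 1$, where $\theta(1) = \varepsilon M_0^{\ell_0}$. I would first observe that since $k$ is odd, $k\ell_0 = 2\ell$ forces $\ell_0$ to be even, say $\ell_0 = 2\ell_1$; then $\ell = k\ell_1$.

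\medskip

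Next I would exhibit the square root explicitly. From $\theta(1) = \varepsilon M_0^{\ell_0} = \varepsilon M_0^{2\ell_1} = \varepsilon(M_0^{\ell_1})^2$, the matrix $M_0^{\ell_1}$ is a candidate square root of $\theta(1)$ (if $\varepsilon = 1$) or of $-\theta(1)$ (if $\varepsilon = -1$), provided it lies in $GL_2(\zz)$ — which it does, since $M_0 \in GL_2(\zz)$. So the conclusion $\theta(1)$ or $-\theta(1)$ has a square root follows immediately once we know $\ell_0$ is even. Thus the entire content of the lemma reduces to the parity observation in the previous paragraph, which is a direct consequence of the first identity $2\ell = k\ell_0$ of Lemma~\ref{lemma:typeIcondition} together with $k$ being odd.

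\medskip

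The main (and only) obstacle is making sure the logical direction is handled correctly: the lemma as stated is an implication "no square root $\Rightarrow$ no odd-$k$ automorphism," and the cleanest route is the contrapositive "odd-$k$ automorphism $\Rightarrow$ some square root exists," which is what the argument above establishes. There is essentially no computation involved — one only needs to invoke Lemma~\ref{lemma:typeIcondition} and do elementary parity arithmetic on the exponents. I would also remark that this lemma is the natural first step toward analysing $\Aut_0^1(E)$: it isolates the necessary condition (evenness of $\ell_0$, equivalently existence of a square root of $\pm\theta(1)$) that the later, more delicate results assume when they posit $\Aut_0^1(E)\ne\emptyset$.
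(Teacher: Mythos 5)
Your proposal is correct and follows essentially the same route as the paper: invoke Lemma~\ref{lemma:typeIcondition} to get $2\ell = k\ell_0$, deduce that $\ell_0$ is even because $k$ is odd, and exhibit $M_0^{\ell_0/2}$ as a square root of $\pm\theta(1)$. Your write-up is in fact slightly more careful than the paper's (which loosely names $M_0^{\ell}$ rather than $M_0^{\ell_0/2}$ as the square root), but the argument is the same.
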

\begin{proof}
By the fundamental lemma, if there is an automorphism with $k$ odd,
then we have $\ell_0$ even. Since $\theta(1)=\pm M_0^{\ell_0}$, there  follows
that $M_0^{\ell}$ is a square root of either $\theta(1)$ nor
$-\theta(1)$. So the result follows.
\end{proof}

Let $k=1$, which by the fundamental lemma implies $\ell_0=2\ell$. As a
consequence of the previous lemma, we need to consider two cases, the
first being when $\theta(1)$ admits square root, and the second when
$-\theta(1)$ admits square root.

Let us set up some notation. Because the two elements of the diagonal
of $\theta(1)$ are the same, this is also the case with $M_0$. So let
us write
$M_0^{\ell} = \begin{pmatrix} r & s
  \\ t & r
  \end{pmatrix}$,
  where $M_0$ is Anosov, so

$M_0^{\ell_0} = \begin{pmatrix} r^2+st & 2rs
  \\ 2rt & r^2+st
  \end{pmatrix}.$
  
According to Lemma~\ref{lemma:typeIcondition}, the two cases to
analyze are as follows, knowing that the relation $ab=b^{-1}a$ is
satisfied:

\begin{enumerate}[1)]
\item $\theta(1)=M_0^{\ell_0}$ (which implies $\det(M_0^\ell)=1$),
  $\varphi(d^xb^y)= \delta M_0^\ell(d^xb^y)$, $\delta = \pm
  1$.
\item $\theta(1)=-M_0^{\ell_0}$ (which implies $\det(M_0^{\ell})=-1$),
  $\varphi(d^xb^y)= \delta M_0^\ell(d^xb^y)$, $\delta = \pm
  1$.
\end{enumerate}

We will show below that in the two cases above we have extensions
$\varphi\in \Aut_0^1(E)$ for $\delta = 1$. Composing with
elements of $\Aut_0^0(E)$, this means we also have extensions with
$\delta=-1$. So we will assume that $\delta=1$ in our
analysis.

Consider the second relation $a^2=d$. To study the equation
$\varphi(a)^2=\varphi(d)$, we need to have our sapphire defined, since
we use the relation $ava^{-1}$, which was not the case for the first
relation. Let $\theta(1)$ be as given in case $1$ and consider the
sapphire given by the matrix
\[
B = \begin{pmatrix} r & -t
  \\  -s & r
\end{pmatrix}.
\]
The two fold cover is the torus bundle given by $\theta(1)$ and
observe that $\det(N)=1$.

\begin{theorem}\label{help3I}
  In case $1$, there is a $\varphi\in \Aut_0^1(E)$.
\end{theorem}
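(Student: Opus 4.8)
The plan is to exhibit an explicit extension $\varphi$ directly, mimicking the structure of the proof of the even case ($k=2$), but now with $k=1$. By Lemma~\ref{lemma:typeIcondition} (Fundamental I) we already know that in case $1$ the relation $ab=b^{-1}a$ is satisfied (since $\delta=1$, $\ell_0=2\ell$, and $\det(M_0)^\ell=1$). So I set $\varphi(d^xb^y)=M_0^\ell(d^xb^y)$ with $M_0^\ell=\begin{pmatrix}r&s\\t&r\end{pmatrix}$, $\varphi(v)=d^pb^qv$, $\varphi(a)=d^mb^nv a$, and I must verify that the remaining two relations of~(\ref{sapphirepresentation}), namely $a^2=d$ and $ava^{-1}=d^{r-ru-st}b^{s-2su}v^{-1}$, can be satisfied by a suitable integer choice of $p,q,m,n$. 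Here the sapphire is the one defined by $B=\begin{pmatrix}r&-t\\-s&r\end{pmatrix}$, so the coefficients $r-ru-st$, $s-2su$ appearing in the relation are computed from this $B$.

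First I would impose $a^2=d$: expanding $\varphi(a)^2=d^mb^nv a\,d^mb^nv a$ and pushing all occurrences of $a$ to the right using $ab=b^{-1}a$ and $ava^{-1}=d^{r-ru-st}b^{s-2su}v^{-1}$ (together with $a^2=d$ in $E$), this becomes a linear equation in $\begin{pmatrix}m\\n\end{pmatrix}$ of the form $\bigl(I_2+\theta\begin{pmatrix}1&0\\0&-1\end{pmatrix}\bigr)\begin{pmatrix}m\\n\end{pmatrix}+(\text{const})= (\text{RHS})$, exactly parallel to the $k=2$ computation but with $\theta$ in place of $\theta^2$ and with the $v$-exponent on $\varphi(a)$ being $1$ instead of $2$. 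I expect the coefficient matrix to be singular with determinant zero, so the solvability reduces to a single scalar divisibility/consistency condition; using $\det B=ru-st=1$ and $rstu\ne 0$, and the identities relating $r,s,t,u$ to the entries of $\theta(1)=M_0^{\ell_0}=\begin{pmatrix}r^2+st&2rs\\2rt&r^2+st\end{pmatrix}$, this condition should collapse to $0=0$, and then I solve for the general integer solution $(m,n)$ depending on a free parameter $c$.

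Next, with such $(m,n)$ fixed, I impose the last relation $ava^{-1}=d^{r-ru-st}b^{s-2su}v^{-1}$ on $\varphi$: this produces a linear system in $\begin{pmatrix}p\\q\end{pmatrix}$, again with singular coefficient matrix, so solvability is one scalar condition; I substitute the expressions for $m,n$ in terms of $c$ and check the condition factors through $ru-st-1=0$ (or a similar factor that vanishes), leaving a genuinely solvable system. The final step is a $\gcd$ computation showing the system has an \emph{integer} solution $(p,q)$ — structurally the same kind of argument as the chain of divisibility implications $d\mid t,\ d\mid 4ru-3,\ \dots,\ d\mid 1$ used in the $k=2$ case — using $\gcd(u,t)=1$ and $\gcd(r,t)=1$ (consequences of $\det M_0^\ell=\pm1$). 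I would remark that, having produced one $\varphi$ with $\delta=1$, composing with suitable elements of $\Aut_0^0(E)$ gives the $\delta=-1$ case as well, though for the statement only existence of one such $\varphi$ is needed.

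The main obstacle is the bookkeeping in the two singular linear systems: getting the constant vectors right requires careful use of the sapphire relation $ava^{-1}=d^{r-ru-st}b^{s-2su}v^{-1}$ with the \emph{specific} $B=\begin{pmatrix}r&-t\\-s&r\end{pmatrix}$, and then verifying that the two scalar consistency conditions vanish identically modulo $ru-st=1$. As the authors note for Section~\ref{section:sapphires}, these identities are best checked with computer algebra; in the write-up I would state the systems, assert (with a one-line factorization) that the consistency conditions hold, and exhibit the explicit general solution, deferring the mechanical verification.
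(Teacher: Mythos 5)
Your proposal takes essentially the same route as the paper's proof: write down the candidate $\varphi$ with $k=\delta=1$ for the sapphire $B=\begin{pmatrix}r&-t\\-s&r\end{pmatrix}$, impose the remaining relations $a^2=d$ and $ava^{-1}=d^{r-r^2-st}b^{-t+2rt}v^{-1}$, and solve the resulting singular linear systems over $\zz$. The only point worth noting is that the bookkeeping is lighter than you anticipate: the system for $(m,n)$ turns out to be homogeneous, so the paper simply takes $m=n=0$, and the $(p,q)$ condition collapses (using $\det M_0^\ell=r^2-st=1$) to the single equation $2(r^2+st)p-4rsq=2-2r$, solved explicitly by $p=r^2-r+st$, $q=t(2r-1)$, with no consistency condition or $\gcd$ chase required.
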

\begin{proof}
  Let $\varphi$ be as in~(\ref{eq:phiattemptI}) with $k=\delta=1$. The
  relation $\varphi^2(a)=\varphi(d)$ is equivalent to the homogeneous
  linear system
  \[
  \begin{pmatrix}
    r^2+st+1 & -2rs \\
    2rt & -r^2-st+1
  \end{pmatrix}
  \begin{pmatrix}
    m\\n
  \end{pmatrix} =
  \begin{pmatrix}
    0\\0
  \end{pmatrix},
  \]
  so we take the obvious solution $m=n=0$. The last relation to be verified is then
  \[
  \varphi(ava^{-1}) = \varphi(d^{r-r^2-st}b^{-t+2rt}v^{-1}),
  \]
  which is equivalent to $2(r^2+st)p-(4rs)q = 2-2r$ and admits $p =
  r^2-r+st$, $q = t(2r-1)$ as a solution. One explicit element
  $\omega\in \Aut_0^1(E)$ is then given by
  \begin{align*}
    \omega(d^xb^y) &= M(d^xb^y), \\
    \omega(v) &= d^{r^2-r+st}b^{t(2r-1)}v, \\
    \omega(a) &= va,
  \end{align*}
  where $M = M_0^\ell = \begin{pmatrix}r & s\\ t&r\end{pmatrix}.$
\end{proof}

Case 2 is similar: we consider the sapphire given by the matrix
\[
B = \begin{pmatrix}
  r & -t \\
  s & -r
\end{pmatrix}
\]
and then we can state:

\begin{theorem}\label{help3II}
  In case $2$, there is also a $\varphi\in \Aut_0^1(E)$.
\end{theorem}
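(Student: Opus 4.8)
The plan is to mimic exactly the argument given for Theorem~\ref{help3I}, adjusting for the fact that we are now in case $2$, where $\theta(1) = -M_0^{\ell_0}$, $\det(M_0^\ell) = -1$, and the sapphire is defined by $B = \begin{pmatrix} r & -t \\ s & -r \end{pmatrix}$. Write $M = M_0^\ell = \begin{pmatrix} r & s \\ t & r \end{pmatrix}$, so that $\theta(1) = -M^2 = -\begin{pmatrix} r^2+st & 2rs \\ 2rt & r^2+st \end{pmatrix}$. By Lemma~\ref{lemma:typeIcondition}, with $k=1$ we have $2\ell = \ell_0$ and (since $\det(M_0)^\ell = \det(M_0^\ell) = -1$ and $\varepsilon = -1$) the compatibility condition $\varepsilon^k \det(M_0)^\ell = (-1)(-1) = 1$ holds, so the relation $ab = b^{-1}a$ is automatically preserved. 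Thus it remains to impose the last two relations of~(\ref{sapphirepresentation}), namely $a^2 = d$ and $ava^{-1} = d^{r-ru-st}b^{s-2su}v^{-1}$, where now $u = -r$, $ru - st = -r^2 - st$... wait — recall that our fixed sapphire matrix $B$ has determinant $1$, so here $\det B = r(-r) - (-t)(s) = -r^2 + st = 1$, i.e. $st - r^2 = 1$; I would carry this identity along throughout.

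First I would attempt the extension $\varphi$ as in~(\ref{eq:phiattemptI}) with $k = \delta = 1$, i.e. $\varphi(d^xb^y) = M(d^xb^y)$, $\varphi(v) = d^pb^qv$, $\varphi(a) = d^mb^nva$, and translate each of the two remaining relations into a linear system in the integer unknowns $(m,n)$ and then $(p,q)$, exactly as in the proof of Theorem~\ref{help3I}. For the relation $a^2 = d$: expanding $\varphi(a)^2 = d^mb^nva\,d^mb^nva = d^mb^n\,\theta(d^mb^{-n})\,v(ava^{-1})va^2$ and using $a^2 = d$, $ava^{-1} = d^{r-ru-st}b^{s-2su}v^{-1}$ (with $u = -r$), I expect — as in case $1$ — to get a homogeneous linear system $\left(I_2 + \theta(1)\begin{pmatrix}1 & 0 \\ 0 & -1\end{pmatrix}\right)\binom{m}{n} = \binom{0}{0}$ or something very close to it, whose obvious solution is $m = n = 0$; I would verify this reduces to a genuine identity using $st - r^2 = 1$. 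Then, with $m = n = 0$, the relation $ava^{-1} = d^{r-ru-st}b^{s-2su}v^{-1}$ becomes a single (after the determinant-zero reduction) linear equation in $(p,q)$, and I would exhibit an explicit solution — by analogy with $p = r^2 - r + st$, $q = t(2r-1)$ from case $1$, adjusted for the sign change — thereby producing an explicit $\omega \in \Aut_0^1(E)$ of the form $\omega(d^xb^y) = M(d^xb^y)$, $\omega(v) = d^pb^qv$, $\omega(a) = va$.

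The main obstacle, as the preamble to this section already flags, is purely computational: correctly bookkeeping the sign changes coming from $\theta(1) = -M^2$ (rather than $+M^2$) and from $B = \begin{pmatrix} r & -t \\ s & -r \end{pmatrix}$ (rather than $\begin{pmatrix} r & -t \\ -s & r \end{pmatrix}$), and checking that the relevant $2\times 2$ coefficient matrices still have determinant zero so that the rational solvability criterion $A_{11}b_2 - A_{21}b_1 = 0$ applies, and that the resulting scalar Diophantine equation has coprime coefficients so that an integer solution exists. These are exactly the kinds of identities the authors verified with Maxima, so I would do the same rather than grind them by hand. Once $\delta = 1$ works, the $\delta = -1$ case follows immediately by composing $\omega$ with a suitable element of $\Aut_0^0(E)$, just as in the even case and in case~$1$; this finishes the proof that $\Aut_0^1(E) \ne \emptyset$ in case~$2$.
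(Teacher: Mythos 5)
Your proposal follows essentially the same route as the paper: the authors explicitly omit the proof of this case as ``completely analogous'' to Theorem~\ref{help3I} and merely record the resulting explicit element $\omega(d^xb^y)=M(d^xb^y)$, $\omega(v)=d^{-r}b^{-t}v$, $\omega(a)=va$, which is exactly the computation (with $m=n=0$ and the sign bookkeeping via $st-r^2=1$) that you outline and defer to machine verification. The only difference is that you stop short of producing the explicit pair $(p,q)=(-r,-t)$, which the paper supplies.
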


The proof of case $2$ is completely analogous and will be ommitted. We
will, however, write down an explicit element $\omega\in \Aut_0^1(E)$
obtained in this case:
\begin{align*}
  \omega(d^xb^y) &= M(d^xb^y), \\
  \omega(v) &= d^{-r}b^{-t}v, \\
  \omega(a) &= va,
\end{align*}
where $M = M_0^\ell = \begin{pmatrix}r & s\\ t&r\end{pmatrix}.$

%
%
%
%
%
%
%

\bigskip

The coset $\Aut_0^1(E)$ of $\Aut_0^0(E)$ may be empty or not. In any
case, we have a homomorphism $\Aut_0(E) \to \zz$ given by
\[
\varphi \in \Aut_0^k(E) \mapsto k,
\]
whose image is either $\zz$ or $2\zz \cong \zz$, so there is a split
short exact sequence
\[
\begin{tikzcd}
  1 \ar[r] & \Aut_0^0(E) \ar[r] & \Aut_0(E) \ar[r] & \zz \ar[r] & 1
\end{tikzcd}
\]
and we have

\begin{theorem}\label{theorem:aut0}
  $\Aut_0(E) \cong \Aut_0^0(E) \rtimes_\omega \zz \cong
  [(\zz\oplus\zz)\rtimes_{-I_2}\zz_2]\rtimes_\omega\zz$.
\end{theorem}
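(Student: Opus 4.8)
The plan is to read the claim off from the split short exact sequence displayed just above, combined with Theorem~\ref{theorem:aut00}. First I would pin down the image of the homomorphism $\Aut_0(E)\to\zz$, $\varphi\in\Aut_0^k(E)\mapsto k$. The set $K$ of integers $k$ for which $\Aut_0^k(E)\ne\emptyset$ is a subgroup of $\zz$ (this was already observed: $\varphi_{k_1}\varphi_{k_2}\in\Aut_0^{k_1+k_2}(E)$ and $\varphi_k^{-1}\in\Aut_0^{-k}(E)$), and by the theorem that $\Aut_0^2(E)\ne\emptyset$ it contains $2$; hence $K=\zz$ or $K=2\zz$. In the first case $\Aut_0^1(E)\ne\emptyset$ and I take for $\omega$ the explicit automorphism produced in Theorem~\ref{help3I} (case $1$) or Theorem~\ref{help3II} (case $2$); in the second case $\Aut_0^1(E)=\emptyset$ and I take for $\omega$ the explicit element of $\Aut_0^2(E)$ exhibited in the proof of the $k=2$ theorem.

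Next I would note that in either case $\omega$ maps onto a generator of $K\cong\zz$, so sending that generator to $\omega$ defines a set-theoretic section of $\Aut_0(E)\to K$; since the base is infinite cyclic, hence free, this section is automatically a group homomorphism, and the sequence
\[
\begin{tikzcd}
  1 \ar[r] & \Aut_0^0(E) \ar[r] & \Aut_0(E) \ar[r] & \zz \ar[r] & 1
\end{tikzcd}
\]
splits. Therefore $\Aut_0(E)\cong\Aut_0^0(E)\rtimes_\omega\zz$, where the action of the generator of $\zz$ on $\Aut_0^0(E)$ is conjugation by $\omega$; this is well defined because $\Aut_0^0(E)$ was already shown to be normal in $\Aut_0(E)$. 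Substituting the isomorphism $\Aut_0^0(E)\cong(\zz\oplus\zz)\rtimes_{-I_2}\zz_2$ of Theorem~\ref{theorem:aut00} then yields the iterated semidirect product $[(\zz\oplus\zz)\rtimes_{-I_2}\zz_2]\rtimes_\omega\zz$ in the statement.

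I do not expect a serious obstacle: the analytic content — the existence of a Type~I extension with $k=1$ (when possible) or $k=2$, and the structure of $\Aut_0^0(E)$ — has already been established, and what remains is formal assembly. The only point deserving a word of care is that the two alternatives $K=\zz$ and $K=2\zz$ should be handled uniformly: since $2\zz\cong\zz$, both cases give the same abstract form $\Aut_0^0(E)\rtimes_\omega\zz$, merely with a different choice of the generating automorphism $\omega$. Making the action of $\omega$ on $\Aut_0^0(E)$ fully explicit is not needed for this statement and will be carried out later when $\Out(E)$ is computed.
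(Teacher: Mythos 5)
Your argument is correct and follows the same skeleton as the paper: the grading homomorphism $\Aut_0(E)\to\zz$, $\varphi\in\Aut_0^k(E)\mapsto k$, has image $\zz$ or $2\zz$ (nonempty by the $k=2$ existence theorem, all of $\zz$ exactly when $\Aut_0^1(E)\ne\emptyset$), a homomorphic section exists because the quotient is free, and the kernel is $\Aut_0^0(E)\cong(\zz\oplus\zz)\rtimes_{-I_2}\zz_2$ by Theorem~\ref{theorem:aut00}. The paper records precisely this splitting in the displayed sequence just before the statement, so for the bare isomorphism type your proof is complete. The one substantive difference is that the paper's proof of this theorem consists almost entirely of what you defer: it fixes the generators $\alpha,\beta,\rho$ of $\Aut_0^0(E)$, writes down an explicit generator $\omega$ in each of the three situations ($\Aut_0^1(E)=\emptyset$ with $\omega\in\Aut_0^2(E)$, and the two subcases $\theta(1)=\pm M_0^{\ell_0}$ with $\omega\in\Aut_0^1(E)$), and computes $\omega\alpha\omega^{-1}$, $\omega\beta\omega^{-1}$, $\omega\rho\omega^{-1}$ in closed form. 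Since the symbol $\rtimes_\omega$ in the statement refers to a specific action, and since the later results (the splitting of $\Aut(E)$ and the presentations of $\Out(E)$) use those explicit conjugation formulas, you should be aware that the computational content you postpone is really the point of this proof in the paper; but as a verification of the stated isomorphism your reasoning has no gap.
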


\begin{proof}
  Following theorem~\ref{theorem:aut00} and considering the morphism
  $\Xi$ just before it, the standard generators of
  $(\zz\oplus\zz)\rtimes_{-I_2}\zz_2$ corresponding to $(1,0,0)$,
  $(0,1,0)$ and $(0,0,1)$ are $\alpha,\beta,\rho \in \Aut_0^0(E)$
  given by
  \begin{align*}
    \alpha(d^xb^y) &= d^xb^y & \beta(d^xb^y) &= d^xb^y & \rho(d^xb^y) &= d^{-x}b^{-y} \\
    \alpha(v) &= d^{rt}b^{-st}v & \beta(v) &= d^tb^{-u}v & \rho(v) &= d^{r(u-1)}b^{s(1-u)}v \\
    \alpha(a) &= ba & \beta(a) &= a & \rho(a) &= d^{-1}a.
  \end{align*}
  We have two cases to consider. If $\Aut_0^1(E) = \emptyset$, then
  $\omega$ is determined by an element of $\Aut_0^2(E)$. One such
  element, that we also call $\omega$, is given by
  \begin{align*}
    \omega(d^xb^y) &= \theta(d^xb^y)\\
    \omega(v) &= d^{4rst}b^{2s-4rsu}v \\
    \omega(a) &= d^{ru+st-r}b^{s-2su}v^2a.
  \end{align*}
  Its inverse is given by
  \begin{align*}
    \omega^{-1}(d^xb^y) &= \theta^{-1}(d^xb^y) \\
    \omega^{-1}(v) &= b^{2s}v \\
    \omega^{-1}(a) &= d^{-8rstu+r-1}b^{4su-8rsu^2+s}v^{-2}a.
  \end{align*}
  The calculation of $\omega\alpha\omega^{-1}$,
  $\omega\beta\omega^{-1}$ and $\omega\rho\omega^{-1}$ is extensive
  but straightforward. We get
  \begin{align*}
    (\omega\alpha\omega^{-1})(d^xb^y) &= d^xb^y \\
    (\omega\alpha\omega^{-1})(v) &= d^{rt(4st+1)}b^{-st(4st+3)}v & \then && \omega\alpha\omega^{-1} &= \alpha^{ru+st}\beta^{2rst} \\
    (\omega\alpha\omega^{-1})(a) &= b^{ru+st}(a) \\
    \\
    (\omega\beta\omega^{-1})(d^xb^y) &= d^xb^y \\
    (\omega\beta\omega^{-1})(v) &= d^{t(4st+3)}b^{-u(4st+1)}v & \then && \omega\beta\omega^{-1} &= \alpha^{2u}\beta^{ru+st}\\
    (\omega\beta\omega^{-1})(a) &= b^{2u}a \\
    \\
    (\omega\rho\omega^{-1})(d^xb^y) &= d^{-x}b^{-y} \\
    (\omega\rho\omega^{-1})(v) &= d^{4s^2t^2+4rst+5st-r+1}b^{-s(1+4st+3u+4stu)}v & \then && \omega\rho\omega^{-1} &= \alpha^{2s(u+1)}\beta^{2rs(u+1)}\rho\\
    (\omega\rho\omega^{-1})(a) &= d^{-1}b^{2s+2su}a
  \end{align*}

  The second case is $\Aut_0^1(E)\ne \emptyset$, which breaks into  two
  subcases. The first is $\theta(1) = M_0^{\ell_0} = (M_0^\ell)^2$,
  $\det(M_0^\ell) = 1$, where $M_0^\ell = \begin{pmatrix}
    r&s\\t&r\end{pmatrix}$ and the sapphire is defined by
    $B=\begin{pmatrix}r&-t\\-s&r\end{pmatrix}$. The maps $\alpha$,
    $\beta$, $\rho$ and $\omega$ analogous to the ones considered in
    the first case are given by

  \begin{align*}
    \alpha(d^xb^y) &= d^xb^y & \beta(d^xb^y) &= d^xb^y & \rho(d^xb^y) &= d^{-x}b^{-y} \\
    \alpha(v) &= d^{-rs}b^{-st}v & \beta(v) &= d^{-s}b^{-r}v & \rho(v) &= d^{r(r-1)}b^{t(r-1)}v \\
    \alpha(a) &= ba & \beta(a) &=a & \rho(a) &= d^{-1}a
  \end{align*}
  \begin{align*}
    \omega(d^xb^y) &= M_0^\ell(d^xb^y) & \omega^{-1}(d^xb^y) &= M_0^{-\ell}(d^xb^y) \\
    \omega(v) &= d^{r^2-r+st}b^{t(2r-1)}v & \omega^{-1}(v) &= d^{1-r}b^{-t}v \\
    \omega(a) &= va & \omega^{-1}(a) &= d^{-2st+r-1}b^{t(2r-1)}v^{-1}a.
  \end{align*}

  What we get in this case is
  \begin{align*}
    \omega\alpha\omega^{-1} &= \alpha^r\beta^{st} \\
    \omega\beta\omega^{-1} &= \alpha\beta^r \\
    \omega\rho\omega^{-1} &= \alpha^{-t}\beta^{-t(r+1)}\rho.
  \end{align*}

  Finally, the second subcase is $\theta(1) = -M_0^{\ell_0} =
  -(M_0^\ell)^2$, $\det(M_0^\ell) = -1$, where $M_0^\ell
  = \begin{pmatrix} r&s\\t&r\end{pmatrix}$ and the sapphire is defined
    by $B=\begin{pmatrix}r&-t\\s&-r\end{pmatrix}$. The maps $\alpha$,
    $\beta$, $\rho$ and $\omega$ are now given by

  \begin{align*}
    \alpha(d^xb^y) &= d^xb^y & \beta(d^xb^y) &= d^xb^y & \rho(d^xb^y) &= d^{-x}b^{-y} \\
    \alpha(v) &= d^{rs}b^{st}v & \beta(v) &= d^{s}b^{r}v & \rho(v) &= d^{-r(r+1)}b^{-t(r+1)}v \\
    \alpha(a) &= ba & \beta(a) &=a & \rho(a) &= d^{-1}a
  \end{align*}
  \begin{align*}
    \omega(d^xb^y) &= M_0^\ell(d^xb^y) & \omega^{-1}(d^xb^y) &= M_0^{-\ell}(d^xb^y) \\
    \omega(v) &= d^{-r}b^{-t}v & \omega^{-1}(v) &= dv \\
    \omega(a) &= va & \omega^{-1}(a) &= d^{2st-1}b^{-2rt}v^{-1}a.
  \end{align*}

  Once again we get
  \begin{align*}
    \omega\alpha\omega^{-1} &= \alpha^r\beta^{st} \\
    \omega\beta\omega^{-1} &= \alpha\beta^r \\
    \omega\rho\omega^{-1} &= \alpha^{-t}\beta^{-t(r+1)}\rho.
  \end{align*}
\end{proof}

Let's now consider the type II automorphisms. First we observe that
the group $(\zz\oplus\zz)\rtimes_\theta\zz$, where
\[
\theta(1) = \begin{pmatrix}
  ru+st & -2rt \\
  -2su & ru+st
\end{pmatrix},
\]
always admits automorphisms that induces $-id_\zz$ on the quotient
$[(\zz\oplus\zz)\rtimes_\theta\zz]/(\zz\oplus\zz)$, and the
restrictions to $(\zz\oplus\zz)$ can always be written on the form
$\pm B_0M_0^\ell$ for some $\ell\in\zz$, where $B_0$ satisfies
$B_0\theta(1) B_0^{-1} = \theta^{-1}$. A solution for $B_0$ on the
latter equation is given by
\[
B_0 = \begin{pmatrix}
1 & 0 \\
0 & -1
\end{pmatrix},
\]
see Proposition~\ref{eqanti} in the Appendix.

If $\varphi_1$ and $\varphi_2$ are two automorphisms of $E$ whose
restrictions to $(\zz\oplus\zz)\rtimes_\theta\zz$ are Type II
automorphisms, then the restriction of $\varphi_1\varphi_2^{-1}$ to
$(\zz\oplus\zz)\rtimes_\theta\zz$ is a Type I automorphism. Hence, if
we denote by $\Aut_0(E)$ the elements of $\Aut(E)$ that induce Type I
automorphisms on $(\zz\oplus\zz)\rtimes_\theta\zz$, we have the short
exact sequence of groups
\begin{equation}\label{eq:autE}
\begin{tikzcd}
1 \arrow{r} & \Aut_0(E) \arrow{r} & \Aut(E) \arrow{r} & \zz_2 \arrow{r} & 1.
\end{tikzcd}
\end{equation}
The generator of the quotient $\zz_2$ is the class of any element of
$\Aut(E)$ that induces a Type II automorphism on
$(\zz\oplus\zz)\rtimes_\theta\zz$. One such family of automorphisms is
given by
\begin{align*}
&\zeta \colon E \to E\\
&\zeta(d^xb^y) = \begin{pmatrix}1 & 0 \\ 0 & -1\end{pmatrix}\begin{pmatrix}x \\ y\end{pmatrix}, \\
&\zeta(v) = d^{p}b^{q}v^{-1}, \\
&\zeta(a) = a,
\end{align*}
where we must find all $p$ and $q$ for which the map above extends to an
automorphism. The calculation leads to
\begin{align*}
  p & =r-(ru+st)+\lambda \frac{(ru+st-1)}{\gcd\{-2su, ru+st-1\}} = r-(ru+st)+\lambda t, \\
  q & =(s-2su)-\lambda \frac{(-2su)}{\gcd\{-2su, ru+st-1\}} = (s-2su)+\lambda u,
\end{align*}
for any $\lambda\in \zz$, since $ru+st-1 = 2st$ and $\gcd(2su,2st)=2s$.

For $\lambda=s$, we have $p=r-ru$ and $q=s-su$ and a direct
computation shows that $\zeta^2=1_E$. Hence we have

\begin{theorem}\label{theorem:autE}
  The short exact sequence
  \[
  \begin{tikzcd}
    1 \arrow{r} & \Aut_0(E) \arrow{r} & \Aut(E) \arrow{r} & \zz_2 \arrow{r} & 1
  \end{tikzcd}
  \]
  splits, hence
  \[
  \Aut(E) \cong \Aut_0(E)\rtimes_\alpha\zz_2 \cong \bigr[\bigl(\zz\oplus\zz)\rtimes_{-I_2}\zz_2\bigr]\rtimes_\omega\zz\bigr]\rtimes_\zeta \zz_2.
  \]
\end{theorem}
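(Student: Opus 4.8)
The plan is to invoke the elementary fact that an extension $1 \to N \to G \to \zz_2 \to 1$ splits exactly when $G$ contains an element of order two that lies outside $N$ and maps onto the generator of $\zz_2$: such an element generates a complement isomorphic to $\zz_2$, giving $G \cong N\rtimes\zz_2$. Here $N = \Aut_0(E)$, the middle group is $\Aut(E)$, and the quotient $\zz_2$ in~(\ref{eq:autE}) is generated by the class of any Type II automorphism. So the whole argument reduces to exhibiting one Type II automorphism of $E$ that is an involution.

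For this I would use the family of candidate maps $\zeta$ displayed just before the statement, determined by $\zeta(d^xb^y) = \begin{pmatrix}1 & 0\\0 & -1\end{pmatrix}\begin{pmatrix}x\\y\end{pmatrix}$, $\zeta(v) = d^pb^qv^{-1}$, $\zeta(a) = a$. As already recorded there, imposing the last three relations of the presentation~(\ref{sapphirepresentation}) of $E$ pins down $(p,q)$ to the displayed one-parameter family (indexed by $\lambda\in\zz$), and for every member of that family $\zeta$ respects all the relations and hence defines an endomorphism of $E$. Because $\zeta(v)$ involves $v^{-1}$, this endomorphism induces $-\id_\zz$ on $[(\zz\oplus\zz)\rtimes_\theta\zz]/(\zz\oplus\zz)$, so it is of Type II and its class is the generator of the quotient $\zz_2$ in~(\ref{eq:autE}). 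It then remains to select a member of the family that has order two. I would specialize to $\lambda = s$, which gives $p = r-ru$ and $q = s-su$, and compute $\zeta^2$ on the four generators $d$, $b$, $v$, $a$ to verify $\zeta^2 = 1_E$; this at the same time shows that $\zeta$ is bijective, being its own inverse, so $\zeta\in\Aut(E)$.

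With this involution in hand, $\langle\zeta\rangle\cong\zz_2$ is a complement to $\Aut_0(E)$ in $\Aut(E)$; hence the sequence~(\ref{eq:autE}) splits and $\Aut(E)\cong\Aut_0(E)\rtimes_\zeta\zz_2$, the action being conjugation by $\zeta$. Substituting the description $\Aut_0(E)\cong[(\zz\oplus\zz)\rtimes_{-I_2}\zz_2]\rtimes_\omega\zz$ from Theorem~\ref{theorem:aut0} then gives the stated iterated semi-direct product $\Aut(E)\cong[[(\zz\oplus\zz)\rtimes_{-I_2}\zz_2]\rtimes_\omega\zz]\rtimes_\zeta\zz_2$. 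The only real obstacle is the computational step that the Type II lift can be chosen to be an involution, i.e.\ the identity $\zeta^2 = 1_E$ for $\lambda = s$: a generic member of the family only squares into $\Aut_0^0(E)$, so the specific value $\lambda = s$ is essential. Everything else is formal once~(\ref{eq:autE}) and Theorem~\ref{theorem:aut0} are in place.
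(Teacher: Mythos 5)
Your proposal matches the paper's own argument: the paper likewise establishes the splitting by exhibiting the Type~II automorphism $\zeta$ with $\lambda=s$ (so $p=r-ru$, $q=s-su$) and checking directly that $\zeta^2=1_E$, then combines this with Theorem~\ref{theorem:aut0} to get the iterated semi-direct product. The only thing the paper's proof adds beyond your outline is the explicit computation of the conjugation action of $\zeta$ on the generators $\alpha,\beta,\rho,\omega$ of $\Aut_0(E)$ in each of the cases, which pins down the action $\rtimes_\zeta$ concretely but is not needed for the splitting itself.
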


\begin{proof}
  The structure of $\Aut(E)$ is clear. We just need to compute the action of $\zeta$, where
  \begin{align*}
  \zeta(d^xb^y) &= d^xb^{-y} \\
  \zeta(v) &= d^{r-ru}b^{s-su}v^{-1} \\
  \zeta(a) &= a
  \end{align*}
  is an automorphism of type II of order $2$. We will follow the
  notation from theorem~\ref{theorem:aut0}. When $\Aut_0^1(E) =
  \emptyset$, we get
  \[
  \zeta\alpha\zeta = \alpha^{-1}, \quad \zeta\beta\zeta = \beta, \quad
  \zeta\rho\zeta = \rho, \quad \zeta\omega\zeta = \alpha^{-2s(u+1)}\beta^{2rs(u+1)}\omega^{-1}.
  \]
  When $\Aut_0^1(E)\ne\emptyset$, we have the same two subcases
  considered in theorem~\ref{theorem:aut0}. In the first subcase, we
  have $\theta(1) = M_0^{\ell_0} = (M_0^\ell)^2$, $\det(M_0^\ell) =
  1$, where $M_0^\ell = \begin{pmatrix} r&s\\t&r\end{pmatrix}$ and the
    sapphire is defined by $B=\begin{pmatrix}r&-t\\-s&r\end{pmatrix}$.
  The map $\zeta$ is given by
  \begin{align*}
  \zeta(d^xb^y) &= d^xb^{-y}, \\
  \zeta(v) &= d^{r(1-r)}b^{t(r-1)}v^{-1}, \\
  \zeta(a) &= a,    
  \end{align*}
  and we get
  \[
  \zeta\alpha\zeta = \alpha^{-1}, \quad \zeta\beta\zeta = \beta, \quad
  \zeta\rho\zeta = \rho, \quad \zeta\omega\zeta = \alpha^{t}\beta^{-t(r+1)}\omega^{-1}.
  \]
  In the second subcase, we have $\theta(1) = -M_0^{\ell_0} =
  -(M_0^\ell)^2$, $\det(M_0^\ell) = -1$, where $M_0^\ell
  = \begin{pmatrix} r&s\\t&r\end{pmatrix}$ and the sapphire is defined
    by $B=\begin{pmatrix}r&-t\\s&-r\end{pmatrix}$. The map $\zeta$ is given by
  \begin{align*}
  \zeta(d^xb^y) &= d^xb^{-y}, \\
  \zeta(v) &= d^{r(r+1)}b^{-t(r+1)}v^{-1}, \\
  \zeta(a) &= a,    
  \end{align*}
  and we get
  \[
  \zeta\alpha\zeta = \alpha^{-1}, \quad \zeta\beta\zeta = \beta, \quad
  \zeta\rho\zeta = \rho, \quad \zeta\omega\zeta = \alpha^{t}\beta^{-rt}\rho\omega^{-1}.
  \]
\end{proof}

Now we compute $\Out(E)$. If $g\in E$, let's denote by $\kappa_g \in
\Inn(E)$ the conjugation by $g$: $\kappa_g(x) = gxg^{-1}$. We will use
the notations from theorems~\ref{theorem:aut0} and~\ref{theorem:autE}.

If $\Aut_0^1(E)=\emptyset$, we have
\[
\kappa_d = \beta^{-2s}, \quad \kappa_b = \alpha^2, \quad
\kappa_v = \alpha^{-2s}\beta^{-2rs}\omega, \quad \kappa_a = \beta^{-s}\zeta,
\]
hence a presentation of $\Out(E)$ is given by
\begin{align*}
  \Out(E) = \langle \alpha,\,\beta,\,\rho,\,\omega,\,\zeta \mid \:&
  \alpha\beta=\beta\alpha,\, \rho^2=1,\, \rho\alpha\rho=\alpha^{-1},\, \rho\beta\rho=\beta^{-1},\\
  &\omega\alpha\omega^{-1} = \alpha^{ru+st}\beta^{2rst},\, \omega\beta\omega^{-1} = \alpha^{2s(u+1)}\beta^{2rs(u+1)}\rho,\\
  &\zeta^2=1,\, \zeta\alpha\zeta=\alpha^{-1},\, \zeta\beta\zeta=\beta,\\
  &\zeta\rho\zeta=\rho,\, \zeta\omega\zeta=\alpha^{-2s(u+1)}\beta^{2rs(u+1)}\omega^{-1},\\
  &\beta^{-2s}=\alpha^2=\alpha^{-2s}\beta^{-2rs}\omega=\beta^{-s}\zeta=1
  \rangle.
\end{align*}

We have $\zeta=\beta^s$, and $\omega=\alpha^{2s}\beta^{2rs} = 1$, so
\begin{align*}
  \Out(E) = \langle \alpha,\,\beta,\,\rho, \mid \:&
  \alpha^2=\beta^{2s}=\rho^2=1,\\
  &\alpha\beta=\beta\alpha,\, \rho\alpha\rho=\alpha,\, \rho\beta\rho=\beta^{-1}
  \rangle
\end{align*}
and $\Out(E) \cong (\zz_2\oplus\zz_{2s})\rtimes_{-1}\zz_2$.

\bigskip

If $\Aut_0^1(E)\ne\emptyset$, as usual we have two subcases: in the
first one, when $\theta(1) = M_0^{\ell_0} = (M_0^\ell)^2$,
$\det(M_0^\ell) = 1$, $M_0^\ell = \begin{pmatrix}
  r&s\\t&r\end{pmatrix}$ and the sapphire is defined by
  $B=\begin{pmatrix}r&-t\\-s&r\end{pmatrix}$, we get
\[
\kappa_d = \beta^{2t}, \quad \kappa_b = \alpha^2, \quad
\kappa_v = \alpha^{2t}\beta^{2rt}\omega^2, \quad \kappa_a = \beta^{t}\zeta,
\]
hence a presentation of $\Out(E) \cong
[(\zz_2\oplus\zz_{2t})\rtimes_{-1}\zz_2]\rtimes_\omega \zz_2$ is given by
\begin{align*}
  \Out(E) = \langle \alpha,\,\beta,\,\rho,\,\omega \mid \:&
  \alpha^2=\beta^{2t}=\rho^2=\omega^2=1,\\
  & \alpha\beta=\beta\alpha,\, \rho\alpha\rho=\alpha,\, \rho\beta\rho=\beta^{-1},\\
  & \omega\alpha\omega^{-1} = \alpha^r\beta^{st},\, \omega\beta\omega^{-1} = \alpha\beta^r,\\
  & \omega\rho\omega^{-1} = \alpha^t\beta^{t(r+1)}\rho
  \rangle.
\end{align*}

\bigskip
Finally, when $\theta(1) = -M_0^{\ell_0} = -(M_0^\ell)^2$,
$\det(M_0^\ell) = -1$, $M_0^\ell = \begin{pmatrix}
  r&s\\t&r\end{pmatrix}$ and the sapphire is defined by
  $B=\begin{pmatrix}r&-t\\s&-r\end{pmatrix}$, we have
\[
\kappa_d = \beta^{2t}, \quad \kappa_b = \alpha^2, \quad
\kappa_v = \beta^{-t}\rho\omega^2, \quad \kappa_a = \beta^{t}\zeta,
\]
hence $\Out(E)$ fits in the short exact sequence
\[
\begin{tikzcd}
  1 \ar[r] & (\zz_2\oplus\zz_{2t})\rtimes_{-1}\zz_2 \ar[r] & \Out(E) \ar[r] & \zz_2 \ar[r] & 1
\end{tikzcd}
\]
and a presentation is given by
\begin{align*}
  \Out(E) = \langle \alpha,\,\beta,\,\rho,\,\omega \mid \:&
  \alpha^2=\beta^{2t}=\rho^2=1,\\
  & \alpha\beta=\beta\alpha,\, \rho\alpha\rho=\alpha,\, \rho\beta\rho=\beta^{-1},\\
  & \omega^2 = \beta^t\rho,\, \omega\alpha\omega^{-1} = \alpha^r\beta^{st},\, \omega\beta\omega^{-1} = \alpha\beta^r,\\
  & \omega\rho\omega^{-1} = \alpha^t\beta^{t(r+1)}\rho
  \rangle.
\end{align*}

\bigskip
   
As a result of the above we obtain:

\begin{theorem}\label{theorem:outE}
\begin{enumerate}[I)]
\item If $\Aut_0^1(E)=\emptyset$ we have
\begin{align*}
  \Out(E) = \langle \alpha,\,\beta,\,\rho, \mid \:&
  \alpha^2=\beta^{2s}=\rho^2=1,\\
  &\alpha\beta=\beta\alpha,\, \rho\alpha\rho=\alpha,\, \rho\beta\rho=\beta^{-1}
  \rangle,
\end{align*}
hence  $\Out(E) \cong (\zz_2\oplus\zz_{2s})\rtimes_{-1}\zz_2$.\\
\item If  $\Aut_0^1(E)\ne\emptyset$
and $\theta(1) = M_0^{\ell_0} = (M_0^\ell)^2$, 
$\det(M_0^\ell) = 1$,  $M_0^\ell = \begin{pmatrix}
  r&s\\t&r\end{pmatrix}$ and the sapphire is defined by
  $B=\begin{pmatrix}r&-t\\-s&r\end{pmatrix}$, we have 
   $\Out(E) \cong
[(\zz_2\oplus\zz_{2t})\rtimes_{-1}\zz_2]\rtimes_\omega \zz_2$ where 
\begin{align*}
   \omega\alpha\omega^{-1} = \alpha^r\beta^{st},\, \omega\beta\omega^{-1} = \alpha\beta^r,  \omega\rho\omega^{-1} = \alpha^t\beta^{t(r+1)}\rho.
\end{align*}\\
\item If   $\Aut_0^1(E)\ne\emptyset$ and $\theta(1) = -M_0^{\ell_0} = -(M_0^\ell)^2$,
$\det(M_0^\ell) = -1$, $M_0^\ell = \begin{pmatrix}
  r&s\\t&r\end{pmatrix}$ and the sapphire is defined by
  $B=\begin{pmatrix}r&-t\\s&-r\end{pmatrix}$, we have 
 a short exact sequence   

\[
\begin{tikzcd}
  1 \ar[r] & (\zz_2\oplus\zz_{2t})\rtimes_{-1}\zz_2 \ar[r] & \Out(E) \ar[r] & \zz_2 \ar[r] & 1
\end{tikzcd}
\]
and a presentation of $\Out(E)$ is given by
\begin{align*}
  \Out(E) = \langle \alpha,\,\beta,\,\rho,\,\omega \mid \:&
  \alpha^2=\beta^{2t}=\rho^2=1,\\
  & \alpha\beta=\beta\alpha,\, \rho\alpha\rho=\alpha,\, \rho\beta\rho=\beta^{-1},\\
  & \omega^2 = \beta^t\rho,\, \omega\alpha\omega^{-1} = \alpha^r\beta^{st},\, \omega\beta\omega^{-1} = \alpha\beta^r,\\
  & \omega\rho\omega^{-1} = \alpha^t\beta^{t(r+1)}\rho
  \rangle.
\end{align*}
\end{enumerate}
\end{theorem}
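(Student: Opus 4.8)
The plan is to obtain $\Out(E)$ as the quotient $\Aut(E)/\Inn(E)$, starting from the explicit presentation of $\Aut(E)$ furnished by Theorem~\ref{theorem:autE}, in which $\Aut(E)$ is generated by the five automorphisms $\alpha,\beta,\rho,\omega,\zeta$ whose defining relations — and in particular the action of $\zeta$ in each of the three cases — were recorded in the proofs of Theorems~\ref{theorem:aut0} and~\ref{theorem:autE}. Since $d,b,v,a$ generate $E$, the map $g\mapsto\kappa_g$ shows that $\Inn(E)$ is already generated (not merely normally generated) by $\kappa_d,\kappa_b,\kappa_v,\kappa_a$. Consequently a presentation of $\Out(E)$ is obtained from that of $\Aut(E)$ simply by adjoining the four relations $\kappa_d=\kappa_b=\kappa_v=\kappa_a=1$, and the whole argument reduces to two steps: (i) express each $\kappa_g$ as a word in $\alpha,\beta,\rho,\omega,\zeta$, and (ii) simplify the resulting presentation by Tietze transformations.

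For step (i) one evaluates $\kappa_g(x)=gxg^{-1}$ on the generators $d^xb^y$, $v$, $a$ of $E$ using presentation~(\ref{sapphirepresentation}) — crucially the relation $ava^{-1}=d^{r-ru-st}b^{s-2su}v^{-1}$ — and then matches the output against the explicit formulas for $\alpha,\beta,\rho,\omega,\zeta$ displayed in the proofs of Theorems~\ref{theorem:aut0} and~\ref{theorem:autE}. Carried out separately in the three cases, this produces exactly the identities stated just before the theorem: $\kappa_d=\beta^{-2s}$, $\kappa_b=\alpha^2$, $\kappa_v=\alpha^{-2s}\beta^{-2rs}\omega$, $\kappa_a=\beta^{-s}\zeta$ when $\Aut_0^1(E)=\emptyset$; $\kappa_d=\beta^{2t}$, $\kappa_b=\alpha^2$, $\kappa_v=\alpha^{2t}\beta^{2rt}\omega^2$, $\kappa_a=\beta^t\zeta$ in the subcase $\theta(1)=M_0^{\ell_0}$; and $\kappa_d=\beta^{2t}$, $\kappa_b=\alpha^2$, $\kappa_v=\beta^{-t}\rho\omega^2$, $\kappa_a=\beta^t\zeta$ in the subcase $\theta(1)=-M_0^{\ell_0}$. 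These are the lengthy identities whose verification the Maxima computation supports.

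For step (ii), the relation from $\kappa_a$ eliminates $\zeta$ (it becomes a power of $\beta$), after which the $\zeta$-relations of $\Aut(E)$ are redundant; the relation from $\kappa_v$ then either forces $\omega=\alpha^{2s}\beta^{2rs}$, which collapses to $\omega=1$ once $\alpha^2=\beta^{2s}=1$ are imposed by $\kappa_b,\kappa_d$ (case I), or yields $\omega^2=1$ (case II) or $\omega^2=\beta^t\rho$ (case III) once $\alpha^2=\beta^{2t}=1$ hold; and $\kappa_d,\kappa_b$ supply the torsion relations $\beta^{2s}=\alpha^2=1$, resp. $\beta^{2t}=\alpha^2=1$. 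Collecting surviving generators and relations gives the three presentations in the statement. One then reads off the structure: in each case $\langle\alpha,\beta\rangle$ is the asserted abelian group $\zz_2\oplus\zz_{2s}$ (resp. $\zz_2\oplus\zz_{2t}$), $\rho$ adjoins a $\zz_2$ acting by $-1$, so $\langle\alpha,\beta,\rho\rangle\cong(\zz_2\oplus\zz_{2s})\rtimes_{-1}\zz_2$ (resp. with $2t$); in cases I and II this subgroup is normal with quotient trivial (resp. the order-$2$ complement generated by $\omega$), giving the stated semidirect products, while in case III the relation $\omega^2=\beta^t\rho$ shows $\omega$ does not split off but $\omega^2$ lies in the normal subgroup $\langle\alpha,\beta,\rho\rangle$, whence the displayed extension of $\zz_2$ is exact.

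The main obstacle is step (i): tracking the compositions of the rather intricate explicit automorphisms through the sapphire relation $ava^{-1}=d^{r-ru-st}b^{s-2su}v^{-1}$ to pin down $\kappa_d,\kappa_b,\kappa_v,\kappa_a$ in terms of $\alpha,\beta,\rho,\omega,\zeta$ — exactly the bookkeeping the authors confirmed by computer. Once these four words are correct, step (ii) and the identification of the group structure are mechanical.
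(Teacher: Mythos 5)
Your proposal is correct and follows essentially the same route as the paper: the authors likewise compute $\kappa_d=\beta^{-2s}$ (resp.\ $\beta^{2t}$), $\kappa_b=\alpha^2$, $\kappa_v$ and $\kappa_a$ as words in $\alpha,\beta,\rho,\omega,\zeta$ in each of the three cases, adjoin the relations $\kappa_d=\kappa_b=\kappa_v=\kappa_a=1$ to the presentation of $\Aut(E)$ from Theorems~\ref{theorem:aut0} and~\ref{theorem:autE}, and simplify (eliminating $\zeta$ via $\kappa_a$ and determining $\omega$ or $\omega^2$ via $\kappa_v$) to obtain the stated presentations and group structures. The only substantive content, as you note, is the verification of the four words for $\kappa_g$, which the paper carries out (with computer assistance) exactly as you describe.
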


\section {Appendix}

The main goal of this appendix is to provide a classification of the Torus
bundles $E$ which admits a homeomorphism such that the induced
automorphism on the quotient of the fundamental group
$\pi_1(E)/(\zz\oplus\zz)\cong\zz$ is multiplication by $-1$. We also  show some results about $\Aut(E)$.

\subsection{Equations on $GL_2(\zz)$}\label{subsection:eqgl2}

We begin by recalling some results from \cite{sakuma} which are
relevant for our purpose. Given $A\in GL_2(\zz)$, $M_A$ denotes the torus bundles associated  to the homeomorphism of the torus determined by the 
matrix $A$.

\begin{lemma}\label{sak}\cite[Lemma 1.1]{sakuma}
  Let $A$ and $B$ be matrices in $GL_2(\zz)$ . Then the following
  conditions are equivalent:\\
\begin{enumerate}[(1)]
\item $M_A$ is homeomorphic to $M_B$.
\item $\pi_1(M_A)$ is isomorphic to $\pi_1(M_B)$.
\item The $\zz\langle t\rangle$-module $H_A$ is isomorphic or
  anti-isomorphic to the $\zz\langle t\rangle$-module $H_B$.
\item $A$ is conjugate to $B$ or $B^{-1}$.
\end{enumerate}
\end{lemma}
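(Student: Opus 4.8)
The plan is to prove the cycle of implications $(4)\Rightarrow(1)\Rightarrow(2)\Rightarrow(4)$ and, separately, the equivalence $(3)\Leftrightarrow(4)$, since $(1)\Rightarrow(2)$ is trivial and the remaining arrows carry all the content. For $(4)\Rightarrow(1)$ I would argue geometrically: if $B=PAP^{-1}$ with $P\in GL_2(\zz)$, then $P$ acting linearly on $\rr^2$ descends to a homeomorphism of $T^2=\rr^2/\zz^2$ conjugating the monodromy $f_A$ to $f_B$, and such a conjugacy of monodromies induces a fibre-preserving homeomorphism $M_A\to M_B$; if instead $B=PA^{-1}P^{-1}$, one first notes that the mapping torus of a homeomorphism and of its inverse are homeomorphic (reverse the $S^1$-direction), so $M_A\cong M_{A^{-1}}$, and then composes with the previous case. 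The implication $(1)\Rightarrow(2)$ needs no comment; alternatively $(2)\Rightarrow(1)$ follows from Waldhausen's rigidity theorem for closed Haken $3$-manifolds recalled in the Introduction, but the cycle above avoids it.

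The heart of the argument is $(2)\Rightarrow(4)$. Write $G_A=\pi_1(M_A)=(\zz\oplus\zz)\rtimes_A\zz$ and likewise $G_B$. The key fact I would use is that the normal subgroup $\zz\oplus\zz\trianglelefteq G_A$ is characteristic — in the $Sol$ case this is exactly Lemma~\ref{extend0}, and in general it can be pinned down intrinsically as the isolator $\{g\in G_A : g^k\in[G_A,G_A]\text{ for some }k\ge1\}$, using that $[G_A,G_A]=(I_2-A)(\zz\oplus\zz)$ has finite index in $\zz\oplus\zz$ (since an Anosov matrix has no eigenvalue $1$) whereas no element outside $\zz\oplus\zz$ has a power inside it. Given this, any isomorphism $\phi\colon G_A\to G_B$ restricts to an isomorphism $\zz\oplus\zz\to\zz\oplus\zz$, i.e. a matrix $P\in GL_2(\zz)$, and induces multiplication by some $\varepsilon\in\{\pm1\}$ on the quotients $\zz\to\zz$. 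Comparing the conjugation action of a lift of the generator on $\zz\oplus\zz$ before and after $\phi$ then forces $PA^{\varepsilon}P^{-1}=B$ (a discrepancy between $\phi(v_A)$ and $v_B^{\varepsilon}$ by an element of $\zz\oplus\zz$ is harmless, conjugation by $\zz\oplus\zz$ being trivial there), which is $(4)$.

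For $(3)\Leftrightarrow(4)$ I would just unwind definitions: $H_A$ is the $\zz\langle t\rangle$-module with underlying group $\zz\oplus\zz$ on which $t$ acts as $A$, so a module isomorphism $H_A\to H_B$ is a $P\in GL_2(\zz)$ with $PA=BP$, and an anti-isomorphism (the isomorphism twisted by $t\mapsto t^{-1}$) is a $P$ with $PA^{-1}=BP$; these are exactly the two options in $(4)$. The equivalence $(2)\Leftrightarrow(3)$ can also be read straight off the characteristic-subgroup discussion, the ambiguity $t\mapsto t^{-1}$ corresponding to the choice of generator of the quotient $\zz$.

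The hard part will be the characteristic-subgroup step in $(2)\Rightarrow(4)$ together with the bookkeeping with lifts that turns the group isomorphism into the matrix equation $PA^{\varepsilon}P^{-1}=B$; everything else is formal once that is in place. It is also the only point where the hypothesis that $M_A$ admits $Sol$ geometry (equivalently that $A$ is Anosov) is genuinely needed — for an arbitrary $A\in GL_2(\zz)$ one would have to deal with finite-order and parabolic monodromies separately, but that is not required here.
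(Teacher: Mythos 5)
The paper does not actually prove this lemma: it is quoted verbatim from Sakuma's \cite[Lemma 1.1]{sakuma}, so there is no in-paper argument to compare yours against. Taken on its own terms, your proof is correct and essentially complete in the case that matters for this paper, namely $A$ (hence $B$) Anosov. The decomposition $(4)\Rightarrow(1)\Rightarrow(2)\Rightarrow(4)$ plus $(3)\Leftrightarrow(4)$ is the right one, the computation $[G_A,G_A]=(I_2-A)(\zz\oplus\zz)$ is correct, and identifying $\zz\oplus\zz$ as the isolator of the commutator subgroup is a clean intrinsic route to the characteristic-subgroup property (which the paper, in Lemma~\ref{extend0}, simply cites from \cite{daci-peter}). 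The lift bookkeeping is also right: $\phi(v_A)=(m,n)v_B^{\varepsilon}$ and the triviality of conjugation by $\zz\oplus\zz$ on itself give exactly $PAP^{-1}=B^{\varepsilon}$.

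The one genuine gap is one of scope, which you flag but then dismiss. The lemma as stated is for arbitrary $A,B\in GL_2(\zz)$, and your isolator argument needs $\det(I_2-A)\ne 0$, i.e., that $1$ is not an eigenvalue of $A$. For parabolic $A$ (e.g., $A=\left(\begin{smallmatrix}1&1\\0&1\end{smallmatrix}\right)$, where $G_A$ is the integral Heisenberg group) the subgroup $\zz\oplus\zz$ is genuinely not characteristic — there are automorphisms mixing the fibre and base directions — so $(2)\Rightarrow(4)$ requires a different argument there, and likewise for $A=I_2$, where $G_A\cong\zz^3$. This is irrelevant to the Sol/Anosov setting in which the paper invokes the lemma, so for the purposes of this paper your argument suffices; but as a proof of the statement as literally written it only establishes the result under a standing Anosov hypothesis, and you should either add that hypothesis to the statement or supply the (easy but separate) elliptic and parabolic cases.
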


For a matrix  $A$ in $GL_2(\zz)$ we define the following sets:

\begin{definition} 
\begin{enumerate}[(1)]
\item For a matrix $A$ in $GL_2(\zz)$ let 
$C(A)=\{B\in GL_2(\zz) | BAB^{-1} =A\}$ and let
$R(A) = \{B\in GL_2(\zz) | BAB^{-1} = A^{-1}\}$,
\item $A$ is called {\it exceptional}, if one of the following conditions is satisfied:
  \begin{enumerate}[(i)]
  \item $\det(A)=1$ and $|tr(A)|\leq 2$.
  \item $\det(A)=-1$ and $tr(A)=0$.
  \end{enumerate}
\item $A$ is called Anosov if $A$ is not exceptional.
\end{enumerate}
\end{definition}

One way to provide the classification of the matrices $A$ such that
$R(A)\ne \emptyset$ is by means of Lemma 1.7, item 2,
in~\cite{sakuma}.  Unfortunately the criterion is not too practical to
be used in our present work. Once we know that for a given Torus
bundle determined by a matrix $A$ there is a $B_0\in R(A)$, then it is
easy to see that $R(A)=\{B \in GL_2(\zz) \::\: B\theta
B^{-1}=\theta^{-1}\}$ is the set of elements of the form $B_0C$ as $C$
runs over $C(A)$.

We have seen that given $\theta$ if $R(\theta)=\emptyset$ , then
$\Aut(E)=\Aut_0(E)$ and this group is given by Proposition
\ref{ident}.

So suppose that the equation $B\theta B^{-1}=\theta^{-1}$ has a
solution $B_0$.  We provide an explicit classification, up to
conjugacy class, of the matrices $\theta$ where this equation admits
at least one solution $B_0$. Then we use the result to  describe $\Aut(E)$, $\Out(E)$  which is given in  Theorems \ref{mainres},  \ref{mainres1}, respectively.

Let $A$ be an Anosov matrix.  We quote the result that $C(A)$ is
isomorphic $\zz\oplus \zz_2$. The generator of $\zz_2$ corresponds 
 to the matriz $-I_2$. Let $A_0$ be a generator of the
summand $\zz$, which is certainly a primitive element, i.e. it is not a proper power of another matrix.  There exists an
integer $\ell$ such that $A_0^{\ell}$ is either $A$ or $-A$, but we
can not guarantee the equality $A_0^{\ell}=A$. For suppose that the
equality holds and $\ell$ is even.  Take $A_1=-A_0$.  Both elements
$A_0$ and $-A_0$ to the power $\ell$ give $A$. In any case if $A$ has
infinite order then the elements of $C(A)$ are of the form $\pm M_0^k$
as $k$ runs over the integers, where $M_0\in C(A)$ is a primitive root
of $A$. So we have to consider the two possibilities.

We quote now Lemma 1.7 from \cite{sakuma}.

\begin{lemma}\label{sak1}
  If $A$ is exceptional then it is conjugate to one and only one of
  the following matrices:

\[\begin{pmatrix}
0 & -1 \\
1 & 0
\end{pmatrix},  
\pm\begin{pmatrix}
0 & -1 \\
1 & 1
\end{pmatrix},
 \pm \begin{pmatrix}
1 & n \\
0 & 1
\end{pmatrix}(n\geq 0),
  \begin{pmatrix}
1 & 0 \\
0 & -1
\end{pmatrix}, 
  \begin{pmatrix}
1 & 1 \\
0 & -1
\end{pmatrix}.
\]

Moreover we have the following:

$C\left(\begin{pmatrix}
0 & -1 \\
1 & 0
\end{pmatrix}\right)= 
\left\{ \pm \begin{pmatrix}
0 & 1 \\
1 & 0
\end{pmatrix},
\pm \begin{pmatrix}
0 & -1 \\
1 & 0
\end{pmatrix}\right\}\cong \zz_4$

$R\left(\begin{pmatrix}
0 & -1 \\
1 & 0
\end{pmatrix}\right)= 
 \begin{pmatrix}
1 & 0 \\
0 & -1
\end{pmatrix}
C\left(\begin{pmatrix}
0 & -1 \\
1 & 0
\end{pmatrix}\right)$

$C\left(\begin{pmatrix}
0 & -1 \\
1 & 1
\end{pmatrix}\right)= 
\left\{ \pm \begin{pmatrix}
1 & 0 \\
0 & 1
\end{pmatrix},
\pm \begin{pmatrix}
0 & -1 \\
1 & 1
\end{pmatrix},
 \pm \begin{pmatrix}
-1 & -1 \\
1 & 0
\end{pmatrix}\right\}\cong \zz_6$

$R\left(\begin{pmatrix}
0 & -1 \\
1 & 1
\end{pmatrix}\right)= 
 \begin{pmatrix}
1 & 0 \\
0 & -1
\end{pmatrix}
C\left(\begin{pmatrix}
0 & -1 \\
1 & 1
\end{pmatrix}\right)$

$C\left(\begin{pmatrix}
1 & n \\
0 & 1
\end{pmatrix}\right)= 
\left\{ \pm \begin{pmatrix}
1 & k \\
0 & 1
\end{pmatrix}(k\in \zz) \right\}\cong \zz\oplus\zz_2 \qquad (n\ne 0)$

$R\left(\begin{pmatrix}
1 & n \\
0 & 1
\end{pmatrix}\right)= 
 \begin{pmatrix}
1 & 0 \\
0 & -1
\end{pmatrix}
C\left(\begin{pmatrix}
1 & n \\
0 & 1
\end{pmatrix}\right) \qquad (n\ne 0)$

$C\left(\begin{pmatrix}
1 & 0 \\
0 & -1
\end{pmatrix}\right)= 
R\left(\begin{pmatrix}
1 & 0 \\
0 & -1
\end{pmatrix}\right)=
\left\{\pm \begin{pmatrix}
1 & 0 \\
0 & 1
\end{pmatrix},
 \pm \begin{pmatrix}
1 & 0 \\
0 & -1
\end{pmatrix}\right\} \cong \zz_2 \oplus \zz_2$

$C\left(\begin{pmatrix}
1 & 1 \\
0 & -1
\end{pmatrix}\right)= 
R\left(\begin{pmatrix}
1 & 1 \\
0 & -1
\end{pmatrix}\right)=
\left\{\pm \begin{pmatrix}
1 & 0 \\
0 & 1
\end{pmatrix},
 \pm \begin{pmatrix}
0 & 1 \\
0 & -1
\end{pmatrix}\right\} \cong \zz_2 \oplus \zz_2$
\end{lemma}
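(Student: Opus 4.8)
The plan is to split into cases according to the pair $(\det A,\tr A)$, which by the definition of ``exceptional'' is one of $(1,\pm 2)$, $(1,\pm 1)$, $(1,0)$ or $(-1,0)$, and in each case to pin down the $GL_2(\zz)$-conjugacy class of $A$ from the characteristic polynomial $\chi_A$. First I would treat the parabolic cases $\det A=1$, $\tr A=\pm 2$: here $N:=A\mp I_2$ satisfies $N^2=0$ by Cayley--Hamilton, so either $N=0$ (giving $\pm I_2$) or $N$ has rank one with $\im N\subseteq\ker N$, both rank-one lattices and $\ker N$ primitive; writing $\im N=n\cdot\ker N$ with $n\ge 1$ and choosing a primitive generator $e_1$ of $\ker N$ together with $e_2$ satisfying $Ne_2=ne_1$ (one checks such an $e_2$ can be taken as part of a basis) yields the normal form $\pm\begin{pmatrix}1&n\\0&1\end{pmatrix}$. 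The integer $n$ equals the index $[\ker N:\im N]$ and is therefore a conjugacy invariant, while conjugation by $\operatorname{diag}(1,-1)$ identifies $n$ with $-n$; this accounts for the families $\pm\begin{pmatrix}1&n\\0&1\end{pmatrix}$, $n\ge 0$, and shows distinct $n$ give distinct classes.

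I would then consider the split case $\det A=-1$, $\tr A=0$: then $\chi_A=(\lambda-1)(\lambda+1)$, so choosing a primitive eigenvector $e_1$ for the eigenvalue $1$ and extending to a $\zz$-basis puts $A$ in the shape $\begin{pmatrix}1&b\\0&-1\end{pmatrix}$; conjugating by $\begin{pmatrix}1&c\\0&1\end{pmatrix}$ changes $b$ to $b-2c$ and conjugating by $\operatorname{diag}(1,-1)$ changes $b$ to $-b$, so $A$ is conjugate to exactly one of $\begin{pmatrix}1&0\\0&-1\end{pmatrix}$ or $\begin{pmatrix}1&1\\0&-1\end{pmatrix}$, the two being separated by the index $[\zz^2:\ker(A-I_2)\oplus\ker(A+I_2)]\in\{1,2\}$. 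Finally, in the elliptic cases $\det A=1$, $\tr A\in\{-1,0,1\}$ the polynomial $\chi_A$ is $\lambda^2+1$ or $\lambda^2\mp\lambda+1$, irreducible over $\qq$, and $\zz^2$ becomes a torsion-free rank-one module over $R=\zz[\lambda]/(\chi_A)$, which is $\zz[i]$ or $\zz[\zeta_6]$ — both principal ideal domains. Hence $\zz^2\cong R$ as $R$-modules, so in the $\zz$-basis $(1,\lambda)$ the matrix $A$ is the companion matrix of $\chi_A$, giving $\begin{pmatrix}0&-1\\1&0\end{pmatrix}$ and $\pm\begin{pmatrix}0&-1\\1&1\end{pmatrix}$. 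Since the pair $(\det,\tr)$ together with the secondary invariants just described distinguishes all the listed matrices, this establishes the ``one and only one'' assertion.

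For the second half of the lemma, for each listed matrix $A$ I would compute $C(A)$ directly by solving the linear equations $BA=AB$ for $B=\begin{pmatrix}p&q\\r&s\end{pmatrix}$ subject to $\det B=\pm 1$: for the matrices with irreducible $\chi_A$ the commutant in $M_2(\zz)$ is the order $\zz[A]\cong R$ and $C(A)=R^\times$, which is $\zz_4$ for $\zz[i]$ and $\zz_6$ for $\zz[\zeta_6]$, while for the remaining matrices the computation is an immediate finite check yielding $\zz\oplus\zz_2$ or $\zz_2\oplus\zz_2$. Then, whenever $R(A)\ne\emptyset$, it is a single coset $B_0\,C(A)$ of the centralizer, so it suffices to exhibit one solution $B_0$ of $BAB^{-1}=A^{-1}$ — found by solving the linear system $BA=A^{-1}B$ — for each relevant matrix; for those $A$ with $A^{-1}=A$ one gets the degenerate statement $R(A)=C(A)$.

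The main obstacle is the elliptic case: the assertion that there is exactly one conjugacy class for each of $\lambda^2+1$ and $\lambda^2\mp\lambda+1$ is precisely the fact that $\qq(i)$ and $\qq(\zeta_3)$ have class number one (equivalently, via the Latimer--MacDuffee correspondence, that the relevant ideal class sets are trivial); this is the one point where an arithmetic input beyond elementary lattice bookkeeping is needed. Everything else — the parabolic and split normal forms, the invariants that separate the classes, and all the centralizer and anti-centralizer computations — is routine and can be carried out by hand.
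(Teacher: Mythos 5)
The paper offers no proof of this lemma at all: it is quoted verbatim from \cite{sakuma} (Lemma~1.7), so your argument is not being measured against an in-paper proof but is supplying one where the authors rely on the literature. Your outline is sound and complete. The case split by $(\det A,\tr A)$, the parabolic normal form via the nilpotent part $N=A\mp I_2$ with the index $[\ker N:\im N]$ as the separating invariant, the split case via a primitive eigenvector and reduction of the off-diagonal entry modulo $2$, and the observation that $R(A)$, when nonempty, is a single coset of $C(A)$ are all correct. You also correctly isolate the one genuinely arithmetic input, namely that $\zz[i]$ and $\zz[\zeta_6]$ are principal ideal domains, so that by Latimer--MacDuffee each irreducible characteristic polynomial $\lambda^2+1$, $\lambda^2\mp\lambda+1$ contributes exactly one conjugacy class, represented by the companion matrix; and the centralizer computations via the commutant $\zz[A]$ and its unit group give the stated isomorphism types $\zz_4$ and $\zz_6$.

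One warning: if you actually carry out the computations you describe, you will find that the statement as transcribed in this paper contains errors, so your (correct) proof will not literally reproduce it. The centralizer $C\bigl(\bigl(\begin{smallmatrix}0&-1\\1&0\end{smallmatrix}\bigr)\bigr)$ is $\bigl\{\pm I_2,\pm\bigl(\begin{smallmatrix}0&-1\\1&0\end{smallmatrix}\bigr)\bigr\}$; the printed element $\bigl(\begin{smallmatrix}0&1\\1&0\end{smallmatrix}\bigr)$ does not commute with $\bigl(\begin{smallmatrix}0&-1\\1&0\end{smallmatrix}\bigr)$ but conjugates it to its inverse, i.e.\ it belongs to $R$, not $C$. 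Similarly $C\bigl(\bigl(\begin{smallmatrix}1&1\\0&-1\end{smallmatrix}\bigr)\bigr)$ must contain $\pm\bigl(\begin{smallmatrix}1&1\\0&-1\end{smallmatrix}\bigr)$ rather than the singular matrix $\bigl(\begin{smallmatrix}0&1\\0&-1\end{smallmatrix}\bigr)$. Finally, for $A=\bigl(\begin{smallmatrix}0&-1\\1&1\end{smallmatrix}\bigr)$ one computes $\bigl(\begin{smallmatrix}1&0\\0&-1\end{smallmatrix}\bigr)A\bigl(\begin{smallmatrix}1&0\\0&-1\end{smallmatrix}\bigr)=\bigl(\begin{smallmatrix}0&1\\-1&1\end{smallmatrix}\bigr)\ne A^{-1}=\bigl(\begin{smallmatrix}1&1\\-1&0\end{smallmatrix}\bigr)$, so $\bigl(\begin{smallmatrix}1&0\\0&-1\end{smallmatrix}\bigr)\notin R(A)$ and the printed coset description of $R(A)$ is wrong; solving $BA=A^{-1}B$ as you propose yields the coset of $C(A)$ represented by $\bigl(\begin{smallmatrix}0&1\\1&0\end{smallmatrix}\bigr)$. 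These are defects of the quoted statement rather than of your method, but you should state explicitly that what your argument establishes is the corrected version.
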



\begin{corollary}
Let $A=\varepsilon A_0^n$, $\varepsilon\in \{\pm 1\}$, be an Anosov
matrix where $A_0$ is a primitive root of $A$. Then $C(A)=\{\pm
A_0^i \mid i\in \zz\}\cong \zz\oplus\zz_2$.
\end{corollary}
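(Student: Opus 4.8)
The plan is to read the corollary as the specialization, to the Anosov case, of the centralizer structure recalled just before it (Sakuma's Lemma~\ref{sak1}, equivalently equation~\eqref{eq:M_0}), so that the only real input is that quoted result. First I would record that an Anosov matrix $A$ is by definition not exceptional; hence either $\det A=1$ with $|\tr A|\ge 3$, or $\det A=-1$ with $\tr A\ne 0$, and in both cases the characteristic polynomial of $A$ has two real roots of absolute value different from $1$. In particular $A$ has infinite order and $A\notin\{I_2,-I_2\}$, so the hypotheses of the infinite-order case of Sakuma's lemma are met: there is a primitive root $M_0\in C(A)$ with $C(A)=\{\pm M_0^\ell:\ell\in\zz\}$, and the complete list of such primitive roots is $\{\pm M_0^{\pm1}\}$.

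Since $A$ itself lies in $C(A)$, we may write $A=\varepsilon M_0^{\,n}$ for suitable $\varepsilon\in\{\pm1\}$ and $n\in\zz$, which is exactly the shape assumed in the statement with $A_0$ any fixed primitive root of $A$. The core of the proof is then the elementary remark that for each of the four admissible choices $A_0\in\{M_0,\,M_0^{-1},\,-M_0,\,-M_0^{-1}\}$ one has
\[
\{\pm A_0^{\,i}:i\in\zz\}=\{\pm M_0^{\,i}:i\in\zz\}=C(A),
\]
because $M_0^{-i}$ ranges over the same powers as $M_0^{i}$ and $(-M_0)^{i}=(-1)^iM_0^{i}=\pm M_0^{i}$, so the extra sign is absorbed by the ``$\pm$''. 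Finally, that $\{\pm M_0^\ell:\ell\in\zz\}\cong\zz_2\oplus\zz$ is immediate from \eqref{eq:M_0}; directly, the map $\delta M_0^\ell\mapsto(\delta,\ell)$ is a well-defined group isomorphism, the well-definedness amounting to the two facts that $M_0$ has infinite order and that $M_0^\ell=-I_2$ is impossible (else $M_0^{2\ell}=I_2$ would force $\ell=0$, hence $-I_2=I_2$).

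There is no deep obstacle here: the statement essentially repackages the structure theorem for $C(A)$ already cited. The only points needing attention are routine — verifying that ``Anosov'' supplies the infinite-order hypothesis required to invoke the clean form of Sakuma's lemma, checking the displayed equality uniformly over the four primitive roots $\pm M_0^{\pm1}$, and keeping the meaning of ``primitive root of $A$'' fixed throughout as a generator of the free cyclic summand of $C(A)$, i.e. one of those four matrices, since it is precisely this that guarantees $\{\pm A_0^{\,i}\}$ exhausts $C(A)$ rather than a proper subgroup of it.
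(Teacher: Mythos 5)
Your proof is correct and follows essentially the same route as the paper, which presents this corollary as an immediate consequence of the quoted structure theorem for $C(A)$ (Sakuma's Lemma 1.7) together with the observation that an Anosov matrix has infinite order. The extra details you supply — that the four primitive roots $\pm M_0^{\pm 1}$ generate the same set, and that $M_0^\ell=-I_2$ is impossible so the set is genuinely $\zz\oplus\zz_2$ — are exactly the routine verifications the paper leaves implicit.
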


From the results above we will show:

\begin{corollary}\label{conj}
  Suppose that $A$ has infinite order. If $B_0 A B_0^{-1}=A^{-1}$ and $B_0$ is of finite order then 
$B_0$ is conjugate to one of the following 3 matrices: 
\[
\begin{pmatrix}
0 & -1 \\
1 & 0
\end{pmatrix},  
\begin{pmatrix}
  1 & 0 \\
  0 & -1
\end{pmatrix}, 
\begin{pmatrix}
  1 & 1 \\
  0 & -1
\end{pmatrix},
\]
whuich have orders equal to $4$, $2$ and $2$, respectively.
Furthermore, $C(A)\cong\zz\oplus\zz_2$ if and only if $A$ has infinite
order.
\end{corollary}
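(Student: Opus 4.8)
The plan is to treat the two assertions separately, proving the structural statement about $C(A)$ first since the classification of $B_0$ will rely on it. Start with the equivalence: $C(A)\cong\zz\oplus\zz_2$ if and only if $A$ has infinite order. A matrix of $GL_2(\zz)$ of finite order is necessarily exceptional, since its eigenvalues are roots of unity (hence of modulus $1$) while an Anosov matrix has an eigenvalue of modulus $>1$. So if $A$ has finite order then, by Lemma~\ref{sak1}, $A$ is conjugate to one of the exceptional matrices listed there, and the centralizers can be read off from that lemma: they are $GL_2(\zz)$ (for $\pm I_2$), $\zz_4$, $\zz_6$, or $\zz_2\oplus\zz_2$. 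Since $C(gAg^{-1})=gC(A)g^{-1}\cong C(A)$ and none of these four groups is isomorphic to $\zz\oplus\zz_2$, one direction follows. Conversely, if $A$ has infinite order, then either $A$ is Anosov --- and the corollary immediately preceding this one gives $C(A)\cong\zz\oplus\zz_2$, in which $\pm I_2$ are the only elements of finite order --- or $A$ is exceptional of infinite order, which by Lemma~\ref{sak1} forces $A$ to be conjugate to $\pm\begin{pmatrix}1&n\\0&1\end{pmatrix}$ for some $n\ge1$, whose centralizer is conjugate to $\{\pm\begin{pmatrix}1&k\\0&1\end{pmatrix}:k\in\zz\}\cong\zz\oplus\zz_2$, again with $\pm I_2$ the only finite-order elements.

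For the claim on $B_0$: from $B_0AB_0^{-1}=A^{-1}$ we get $B_0^2AB_0^{-2}=A$, so $B_0^2\in C(A)$; since $B_0^2$ has finite order, the previous paragraph gives $B_0^2=\pm I_2$. If $B_0^2=-I_2$, then $B_0$ has order $4$ and characteristic polynomial $\lambda^2+1$ (trace $0$, determinant $1$), and every such matrix of $GL_2(\zz)$ is conjugate to $\begin{pmatrix}0&-1\\1&0\end{pmatrix}$ --- by the PID property of $\zz[\lambda]/(\lambda^2+1)\cong\zz[i]$, or simply because Lemma~\ref{sak1} exhibits this as the unique conjugacy class of order $4$. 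If $B_0^2=I_2$, then $B_0$ has order $1$ or $2$; order $1$ is impossible, since it would give $A=A^{-1}$ whereas $A$ has infinite order, and $B_0=-I_2$ is excluded for the same reason ($-I_2$ being central). A matrix of order $2$ in $GL_2(\zz)$ other than $-I_2$ has determinant $-1$ and trace $0$, hence is exceptional, so by Lemma~\ref{sak1} it is conjugate to $\begin{pmatrix}1&0\\0&-1\end{pmatrix}$ or $\begin{pmatrix}1&1\\0&-1\end{pmatrix}$. This gives the three possibilities listed, of orders $4$, $2$, $2$ respectively.

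The argument uses no analysis beyond the remark on eigenvalue moduli; the only delicate point is bookkeeping --- confirming that every finite-order conjugacy class of $GL_2(\zz)$ is accounted for by Lemma~\ref{sak1}, pairing each with its correct centralizer, and using the infinite-order hypothesis on $A$ to discard the degenerate cases $B_0=\pm I_2$. The main obstacle, to the extent there is one, is organizing these cases; everything then reduces to Lemma~\ref{sak1} and the corollary preceding the statement.
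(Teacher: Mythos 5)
Your argument is correct, and its skeleton is the one the paper uses: both reduce everything to the list of exceptional (hence all finite-order) conjugacy classes in Lemma~\ref{sak1} and then use the relation $B_0AB_0^{-1}=A^{-1}$ together with the infinite order of $A$ to discard the impossible classes. The mechanism of exclusion differs, though. The paper rules out orders $3$ and $6$ by iterating the conjugation three times: $B_0^3=\pm I_2$ is central, so $A=B_0^3AB_0^{-3}=A^{-1}$, contradicting the infinite order of $A$. You instead observe that $B_0^2\in C(A)$ is a torsion element of $C(A)\cong\zz\oplus\zz_2$, hence $B_0^2=\pm I_2$, which kills orders $3$ and $6$ in one stroke and immediately pins the order of $B_0$ to $2$ or $4$; this is arguably cleaner, at the mild cost of needing the ``furthermore'' clause first (and of checking that the torsion of $C(A)$ really is $\{\pm I_2\}$, which you do correctly in both the Anosov and the exceptional-infinite-order cases). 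You also supply a genuine proof of the ``furthermore'' statement --- finite order forces exceptionality via eigenvalue moduli, and none of the centralizers $GL_2(\zz)$, $\zz_4$, $\zz_6$, $\zz_2\oplus\zz_2$ is $\zz\oplus\zz_2$ --- where the paper only asserts that it ``follows promptly from the two Lemmas above.'' No gaps.
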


\begin{proof}
  By the Lemma \ref{sak1} it suffices to show that $B_0$ cannot have
  order either $3$ or $6$. If $B_0^3=I_2$ it follows
  that $$A=B_0^3AB_0^{-3}=(B_0(B_0(B_0AB_0^{-1})B_0^{-1})B_0^{-1}=
  (B_0(B_0(A^{-1})B_0^{-1})B_0^{-1}=(B_0AB_0^{-1})=A^{-1}.$$ So
  $A=A^{-1}$ or $A^2=I_2$, which is a contradiction since $A$ has
  infinite order.  If $B_0^6=I_2$ it follows that $B_0$ is conjugate of
  $\begin{pmatrix} 0 & -1 \\ 1 & 1
\end{pmatrix},$   so $B_0^3$ is conjugate of the cube of 
$\begin{pmatrix}
0 & -1 \\
1 & 1
\end{pmatrix},$ 
which is $-I_2$. Therefore $B_0^3AB_0^{-3}=(-I_2) A(-I_2)=A$. But as
before, $B_0^3AB_0^{-3}= A^{-1}$ and we get a contradiction. The
``furthermore'' part follows promptly from the two Lemmas above.
\end{proof}

Observe that given any matrix, if it has order 4, then it is conjugate
to the matrix $ \begin{pmatrix} 0 & -1 \\ 1 & 0
\end{pmatrix}.$
If it has order 2, then we look at the matrix mod $2$ and then, if it is
the identity, it is conjugate to $\begin{pmatrix} 1 & 0 \\ 0 & -1
\end{pmatrix}$, otherwise it has order two and it is conjugate to 
$ \begin{pmatrix} 1 & 1 \\ 0 & -1
\end{pmatrix}$.
  
\begin{lemma}
If $A\in GL_2(\zz)$ has infinite order, $C(A)=C(-A)=C(A^k)$ for $k\ne
0$. In particular, this holds if $A$ is Anosov.
\end{lemma}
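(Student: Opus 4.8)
The plan is to reduce the whole statement to an elementary fact about the commutative algebra generated by $A$ inside $M_2(\qq)$. Two of the three claimed equalities are immediate. Since $-I_2$ is central in $GL_2(\zz)$ and $-A=(-I_2)A$, a matrix $B$ commutes with $A$ if and only if it commutes with $-A$, so $C(-A)=C(A)$; and $C(A)\subseteq C(A^k)$ because anything commuting with $A$ commutes with $A^k$. Thus the only real content is the reverse inclusion $C(A^k)\subseteq C(A)$ for $k\ne 0$.

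For this I would argue as follows. Because $A$ has infinite order it is not $\pm I_2$, hence not a scalar matrix (the only scalar matrices in $GL_2(\zz)$ are $\pm I_2$); and $A^k$ is not scalar either, since $A^k=cI_2$ would force $c^2=\det(A)^k=\pm 1$, so $c=\pm 1$ and $A^k=\pm I_2$, contradicting infinite order. A non-scalar $2\times 2$ matrix over a field is non-derogatory (its minimal polynomial has degree $2$, equal to the characteristic polynomial), and the centralizer of a non-derogatory matrix in the full matrix ring is exactly the subalgebra it generates. Hence $C_{M_2(\qq)}(A)=\qq[A]$ and $C_{M_2(\qq)}(A^k)=\qq[A^k]$, both of $\qq$-dimension $2$. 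Since trivially $\qq[A^k]\subseteq\qq[A]$, comparing dimensions gives $\qq[A^k]=\qq[A]$. Finally, for $\zz$-matrices the equation $BA=AB$ over $\zz$ is the same as over $\qq$, so $C(A)=GL_2(\zz)\cap\qq[A]$ and $C(A^k)=GL_2(\zz)\cap\qq[A^k]$; intersecting the equality $\qq[A^k]=\qq[A]$ with $GL_2(\zz)$ gives $C(A^k)=C(A)$.

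The last sentence of the statement then follows by recalling that an Anosov matrix has infinite order: its characteristic polynomial $x^2-\tr(A)x+\det(A)$ has positive discriminant and real eigenvalues of absolute value different from $1$ (using $|\tr(A)|\ge 3$ when $\det(A)=1$, and $\tr(A)\ne 0$ when $\det(A)=-1$, which is exactly the non-exceptional condition), so no power of $A$ can equal the identity.

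I do not expect a genuine obstacle here; the only step that warrants a sentence of justification is the standard fact that a non-scalar $2\times 2$ matrix over a field has centralizer equal to the algebra it generates, which one reads off the rational canonical form. An alternative, more computational route avoids passing to $\qq$: for $A$ Anosov one writes $A=\varepsilon A_0^n$ with $A_0$ a primitive root so that $C(A)=\{\pm A_0^i\}$ by the corollary above, notes that $A^k$ has the same primitive root $A_0$, and reads off $C(A)=C(A^k)$; for the remaining infinite-order (parabolic) matrices $\pm\begin{pmatrix}1&n\\0&1\end{pmatrix}$ one simply invokes Lemma~\ref{sak1}. The uniform $M_2(\qq)$ argument, however, covers both cases simultaneously and is the one I would present.
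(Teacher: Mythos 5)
Your proof is correct and complete. The paper itself offers no argument here --- its proof reads only ``Straightforward'' --- so there is no authorial approach to compare against; your reduction to the fact that a non-scalar $2\times2$ matrix over $\qq$ has centralizer $\qq[A]$ of dimension $2$, combined with the observation that $A^k$ is again non-scalar when $A$ has infinite order, is a clean way to supply the omitted details, and your verification that Anosov implies infinite order is also sound.
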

\begin{proof}
Straightforward.
\end{proof}

Using the lemma above we show the following proposition, which is
interesting in its own right.

\begin{proposition}
Let $A$ be Anosov and $B_0$ a matriz such that
$B_0AB_0^{-1}=A^{-1}$. Then $B_0^2=\pm I_2$ and $B_0\ne -I_2$.
\end{proposition}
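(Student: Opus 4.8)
The plan is to diagonalize $A$ over $\rr$ and read off the possible shape of any matrix conjugating $A$ to $A^{-1}$; the Anosov hypothesis enters through the facts that $A$ has infinite order and that its two real eigenvalues are distinct. First I would record that $\det A = 1$: taking traces in $B_0AB_0^{-1}=A^{-1}$ and using the two–dimensional Cayley--Hamilton identity $A^{-1}=(\tr A)I_2 - A$ (so that $\tr A^{-1}=\tr A/\det A$) gives $\tr A\,(\det A-1)=0$; but $\tr A=0$ would make $A$ exceptional when $\det A=-1$ and of order $4$ when $\det A=1$, either way contradicting that $A$ is Anosov, so $\det A=1$.

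Since $A$ is Anosov with $\det A=1$ we have $|\tr A|\ge 3$, so the characteristic polynomial $x^2-(\tr A)x+1$ has two distinct real roots $\lambda$, $\lambda^{-1}$ with $\lambda\neq\pm1$. Choose $P\in GL_2(\rr)$ with $P^{-1}AP=D:=\begin{pmatrix}\lambda&0\\0&\lambda^{-1}\end{pmatrix}$ and set $Q=P^{-1}B_0P$. The hypothesis becomes $QDQ^{-1}=D^{-1}$, i.e.\ $QD=D^{-1}Q$; comparing the two diagonal entries of this matrix identity and using $\lambda\neq\lambda^{-1}$ forces both diagonal entries of $Q$ to vanish, so $Q=\begin{pmatrix}0&b\\c&0\end{pmatrix}$ for some $b,c\in\rr$. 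Then $Q^2=bc\,I_2$, hence $B_0^2=PQ^2P^{-1}=bc\,I_2$ is a scalar matrix. But $B_0^2\in GL_2(\zz)$ has determinant $\det(B_0)^2=1$, so $bc\in\zz$ and $(bc)^2=1$; therefore $bc=\pm1$ and $B_0^2=\pm I_2$. Finally $B_0\neq -I_2$, because $B_0=-I_2$ would give $B_0AB_0^{-1}=A$, hence $A=A^{-1}$ and $A^2=I_2$, impossible for an infinite-order matrix.

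All of the computations here are elementary, so there is no real obstacle; the one point that genuinely requires the Anosov hypothesis — and hence the step to be careful about — is the diagonalizability of $A$ over $\rr$ with \emph{simple} spectrum (and, relatedly, the exclusion of $\tr A=0$), since if $\lambda=\lambda^{-1}$ the diagonal entries of $Q$ need not vanish and the whole argument collapses. I would present the proof in exactly this order: reduce to $\det A=1$, diagonalize and deduce that $Q$ is antidiagonal, square it, and use integrality of $B_0$ to pin down the scalar as $\pm1$.
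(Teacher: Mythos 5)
Your proof is correct, and it takes a genuinely different route from the paper's. The paper first shows that $B_0$ cannot be Anosov (via $C(B_0^2)=C(B_0)$), then invokes Sakuma's classification of exceptional matrices in $GL_2(\zz)$ to reduce to finitely many conjugacy classes, and disposes of the remaining infinite-order exceptional case ($B_0$ conjugate to $\pm\bigl(\begin{smallmatrix}1&n\\0&1\end{smallmatrix}\bigr)$) by an explicit elimination in a system of quadratic equations; the conclusion $B_0^2=\pm I_2$ then comes from the fact that the surviving conjugacy classes have order $2$ or $4$. You instead diagonalize $A$ over $\rr$ and observe that any $Q$ with $QDQ^{-1}=D^{-1}$ for $D$ diagonal with distinct entries must be antidiagonal, hence squares to a scalar, which integrality and $\det(B_0)^2=1$ pin down to $\pm1$. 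Your argument is more self-contained and avoids both the citation of the classification lemma and the case analysis; what the paper's approach buys in exchange is sharper information, namely the exact list of conjugacy classes to which $B_0$ can belong (Corollary 5.7), which the authors need later anyway. One small point of presentation: the identity $A^{-1}=(\tr A)I_2-A$ as you wrote it already presupposes $\det A=1$; you should quote the general adjugate form $A^{-1}=\bigl((\tr A)I_2-A\bigr)/\det A$, from which your stated consequence $\tr A^{-1}=\tr A/\det A$ does follow, and the rest of your reduction to $\det A=1$ is fine.
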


\begin{proof}
The first observation is that $B_0$ is not Anosov. In fact, if $B_0$
is Anosov, then $B_0^2AB_0^{-2}=A  \Rightarrow A\in C(B_0^2) = C(B_0)$,
hence $B_0AB_0^{-1} = A = A^{-1}$, which cannot happen for $A$ Anosov.

Given that $B_0$ is exceptional, suppose first that $B_0$ has infinite
order, that is, it is conjugate to $\pm\begin{pmatrix}1 & n \\ 0 &
1\end{pmatrix}$ for some $n\ne 0$. So for an appropriate matrix
$A_1=\begin{pmatrix}a & b\\c & d\end{pmatrix}$, conjugate of $A$, and
for $B_1=\pm\begin{pmatrix}1 & n \\ 0 & 1\end{pmatrix}$, we have
$B_1A_1B_1^{-1}A_1 = I_2$ if, and only if,
\[
\left|
\begin{array}{l}
a^2-n^2c^2+bc+ncd=1\\
c(a-nc+d)=0\\
ab+nbc-nad-n^2cd+bd+nd^2=0\\
bc-ncd+d^2=1.
\end{array}
\right.
\]
If $c=0$, then we have $a^2=d^2=1$ and $A$ is not Anosov. Hence $c \ne
0$ and the system is equivalent to
\[
\left|
\begin{array}{l}
(a-nc)(a+nc)+bc+ncd=1\\
a-nc=-d\\
ab+nbc-nad-n^2cd+bd+nd^2=0\\
bc-ncd+d^2=1
\end{array}
\right.
\iff
\left|
\begin{array}{l}
ad-bc=-1\\
a-nc=-d\\
ab+n-n^2cd+bd+nd^2=0\\
bc-ncd+d^2=1.
\end{array}
\right.
\]
Solving the third and fourth equations for $b$ making use of equation
(2) yields to \break $b = \dfrac{1-d^2+ncd}{c} = \dfrac{-1-d^2+ncd}{c}$,
which is a contradiction.

\end{proof}


Now if we assume that $B_0$ is any of the 3 matrices of the statement
of Corollary \ref{conj} , we can find all matrices $A$ of infinite
order such that $B_0AB_0^{-1}=A^{-1}$.

\begin{proposition}\label{eqanti}
  Let $B_0AB_0^{-1}=A^{-1}$. The solutions of this equation for $A$
  are:
  \begin{enumerate}[I)]
  \item If $B_0=\begin{pmatrix} 0 & -1 \\ 1 & 0
  \end{pmatrix}$, then $A=\begin{pmatrix}
    a & b \\
    b & d
  \end{pmatrix}$,
    where $ad-b^2=1$  or  $\det(A)=1$.
  \item If $B_0 = \begin{pmatrix}
    1 & 0 \\
    0 & -1
  \end{pmatrix}$, then
    $A=\begin{pmatrix}
    a &  b\\
    c & a
  \end{pmatrix}$,
    where $a^2-bc=1$ or $\det(A)=1$.
    \item If $B_0 = \begin{pmatrix}
      1 & 1 \\
      0 & -1
    \end{pmatrix}$,   
      then either 
      
$A=\begin{pmatrix}
a &  b\\
d-a & d
\end{pmatrix}$ for   arbitrary $a,b, d$ such that  
 $\det(A)=1$ or 
      
      $A=\begin{pmatrix}
      1 &  1\\
      0 & -1
      \end{pmatrix}$ or
      $A=\begin{pmatrix}
      -1 &  -1\\
      0 & 1
      \end{pmatrix}$.
  \end{enumerate}

\end{proposition}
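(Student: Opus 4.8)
The plan is to solve the matrix equation $B_0AB_0^{-1}=A^{-1}$ by direct entry comparison. Write $A=\begin{pmatrix}a&b\\c&d\end{pmatrix}$ and set $\Delta=\det A=ad-bc\in\{1,-1\}$, so that $A^{-1}=\Delta^{-1}\begin{pmatrix}d&-b\\-c&a\end{pmatrix}$. In each of the three cases the matrix $B_0$ is explicit and satisfies $B_0^2=\pm I_2$ (indeed $B_0^2=-I_2$ in case I and $B_0^2=I_2$ in cases II and III), so $B_0^{-1}$ equals $-B_0$ or $B_0$ and the left-hand side $B_0AB_0^{-1}$ is a single explicit $2\times 2$ product in the entries $a,b,c,d$. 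Equating it with $A^{-1}$ then produces a small linear system, which I would solve.

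Before the case analysis I would record a constraint on $\Delta$. Since $B_0AB_0^{-1}$ is conjugate to $A$, it has the same trace, whence $\tr A=\tr(A^{-1})=\Delta^{-1}\tr A$. Therefore either $\Delta=1$, or $\Delta=-1$ and $\tr A=0$; in the latter case the Cayley--Hamilton relation $A^2-(\tr A)A+(\det A)I_2=0$ reduces to $A^2=-\Delta I_2=I_2$, so $A$ has order $\le 2$. This is what forces all the ``generic'' families in the statement to have $\det A=1$, and it pins down the only possible additional solutions as finite-order matrices with $\det A=-1$.

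Next I would carry out the three entry comparisons. In case I one computes $B_0AB_0^{-1}=\begin{pmatrix}d&-c\\-b&a\end{pmatrix}$; equating with $\Delta^{-1}\begin{pmatrix}d&-b\\-c&a\end{pmatrix}$ forces $\Delta=1$ and $b=c$, while the $\Delta=-1$ branch would demand $a=d=0$ and $b^2=-1$, which is impossible over $\zz$; this gives exactly the symmetric family with $\det A=1$. In case II one gets $B_0AB_0^{-1}=\begin{pmatrix}a&-b\\-c&d\end{pmatrix}$, and the comparison splits into $\Delta=1$ with $a=d$ (the family $\begin{pmatrix}a&b\\c&a\end{pmatrix}$) and $\Delta=-1$ with $b=c=0$, $d=-a$, $a^2=1$, i.e.\ the finite-order matrices $\pm\begin{pmatrix}1&0\\0&-1\end{pmatrix}$. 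In case III one finds $B_0AB_0^{-1}=\begin{pmatrix}a+c&a+c-b-d\\-c&d-c\end{pmatrix}$; comparing with $\Delta^{-1}\begin{pmatrix}d&-b\\-c&a\end{pmatrix}$ yields, when $\Delta=1$, the single relation $c=d-a$ (the family $\begin{pmatrix}a&b\\d-a&d\end{pmatrix}$ with $\det A=1$), and, when $\Delta=-1$, the conditions $c=0$, $d=-a$, $b=a$, $a^2=1$, which give precisely $\begin{pmatrix}1&1\\0&-1\end{pmatrix}$ and $\begin{pmatrix}-1&-1\\0&1\end{pmatrix}$. Collecting the three cases yields the statement.

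There is no serious obstacle here: once the trace reduction is in place, each case is a bounded computation. The main points of care are (a) not discarding the $\Delta=-1$ branch, which in case III produces the two finite-order exceptional matrices recorded in the statement, and (b) presenting each parametrization in the normalized shape used in the statement, e.g.\ recording the case~III constraint as $c=d-a$ rather than an equivalent rearrangement. (Alternatively, one can observe that $B_0AB_0^{-1}=A^{-1}$ is equivalent to $(B_0A)^2=B_0^2$, so $B_0A$ has order dividing $2$ in cases II and III and order $4$ in case I, and then read off the solutions from the classification of finite-order elements of $GL_2(\zz)$; but the direct entry comparison is shorter.)
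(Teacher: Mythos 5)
Your proof is correct and follows essentially the same route as the paper: a direct entry-by-entry solution of $B_0AB_0^{-1}=A^{-1}$ in each of the three cases (the paper merely states the outcome of these computations, while you add the useful preliminary reduction $\tr A=\Delta^{-1}\tr A$, so that $\det A=-1$ forces $A^2=I_2$). One small point: in case II your computation correctly produces the additional involutive solutions $\pm\begin{pmatrix}1&0\\0&-1\end{pmatrix}$ of determinant $-1$, which the proposition as stated (and the paper's own proof) omits; since the proposition is only ever applied to Anosov, hence infinite-order, matrices $A$ this is harmless, but your closing claim that ``collecting the three cases yields the statement'' is literally accurate only after discarding these finite-order solutions.
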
 

\begin{proof} {Case I:}   
 $B_0=\begin{pmatrix}
0 & -1 \\
1 & 0
\end{pmatrix}.$   In this case by solving the system $B_0AB_0^{-1}=A^{-1}$ we find that  the solution for the matrices $A$ are of the form 
 $A=\begin{pmatrix}
  a & b \\
  b & d
  \end{pmatrix}$,
  where $ad-b^2=1$  or  $\det(A)=1$.

{Case II:}
$B_0 = \begin{pmatrix}
1 & 0 \\
0 & -1
\end{pmatrix}.$  In this case by solving the system $B_0AB_0^{-1}=A^{-1}$ we find that 
$A=\begin{pmatrix}
a &  b\\
c & a
\end{pmatrix}$,
where $a^2-bc=1$ or $\det(A)=1$

{Case III:}
$B_0 = \begin{pmatrix}
1 & 1 \\
0 & -1
\end{pmatrix}$. In this case by solving the system $B_0AB_0^{-1}=A^{-1}$ for $A=\begin{pmatrix}a & b \\ c & d\end{pmatrix}$ we find that $c=0$ or $c=d-a$. For $c=0$ we have the following matrices as solutions:
\[
A=\begin{pmatrix}
1 &  1\\
0 & -1
\end{pmatrix}\text{, }
A=\begin{pmatrix}
-1 &  -1\\
0 & 1
\end{pmatrix}\text{ and } A=\begin{pmatrix}
1 &  b\\
0 & 1
\end{pmatrix}\text{ for arbitrary $b$}.
\]
For $c=d-a$ we have the matrix
$A=\begin{pmatrix}
a &  b\\
d-a & d
\end{pmatrix}$. By straightforward calculation the equation $B_0AB_0^{-1}=A^{-1}$ holds if and only if  
$ad-bd+ab=1$, i.e.  if $\det(A)=1$ and $d^2-ab=1$.  So the result follows.
\end{proof}

\begin{remark}  a)The Proposition above defines three families $\mathbb{F}_1, \mathbb{F}_2, \mathbb{F}_3$ matrices in $ GL_2(\zz)$ given by the items I), II) and III),  respectively. Namely $\mathbb{F}_i$ is the family of matrices which are conjugated to one of the matrices given by the item $i)$.  
    Certainly the familes are not disjoint since 
$\begin{pmatrix}
1 &  1\\
1 & 2
\end{pmatrix}$
\noindent belong to $\mathbb{F}_1$ and $\mathbb{F}_3$. 
  
b)   We claim that  $\mathbb{F}_3$ is not contained in the union of the other two families. For let 
$A=\begin{pmatrix}
2 &  5\\
1 & 3
\end{pmatrix}$, \noindent which belongs to the family $\mathbb{F}_3$. 
\noindent The matrix $A$ cannot belong to the family  $\mathbb{F}_2$ since its  trace  $A$ is odd. We also claim that that $A$ cannot belong to the family $\mathbb{F}_1$, i.e. 
$A$ is not conjugated to a  matrix of the form     $ \begin{pmatrix}
a & b \\
b & d
\end{pmatrix}$ having determinant $1$.  This can be verified is by  directly calculation. Suppose that $A$ is conjugated to a matrix of the form

$A_1=\begin{pmatrix}
a &  b\\
b & 5-a
\end{pmatrix}$, 
\noindent  $\det(A_1)=1$ i.e. $a^2-5a+b^2+1=0$.  But we claim that the quadratic equation 
$a^2-5a+b^2+1=0$ does not admit an  integer solution. For the discriminant   of the equation 
 is $ \Delta=25-4-4b^2=21-4b^2$ and must be $\geq0$. This implies that $|b|\leq 2$ so we have three possibilities which are $|b|=0, |b|=1, |b|=2$.  But  for  $|b|=0$ we have $\Delta=21$,  for  $|b|=1$ we have $\Delta=21$, for 
  $|b|=2$ we have $\Delta=5$. So in all these 3 cases the square root of $\Delta$ is not an integer and the result follows.
  
%
%
%
%
%
%
%
%
%
%
%
%
%
%
%
%
%
%
%
%
%
%
    We do not know if the same happens for $\mathbb{F}_1$ and for  $\mathbb{F}_2$, i.e., if one of these families are contained in the union of the other two families. 
   \end{remark} 
Now we prove the major result.

\begin{theorem}\label{eqsl2}  If $A$ has infinite order and $B_0AB_0^{-1}=A^{-1}$  then $B_0$ is conjugate to one 
of the three  matrices given by Corollary \ref{conj} and $A$ is conjugate to one of the matrices $A$ given by 
Proposition \ref{eqanti}. Further, if $A$ is Anosov then $A$ is conjugate to one  of  the  matrices of the form
\begin{gather*}
A=\begin{pmatrix}
a & b \\
b & d
\end{pmatrix},
\text{ where $\det(A)=1$ and $|a+d|>2$, or }\\
A=\begin{pmatrix}
a &  b\\
c & a
\end{pmatrix}
\text{ where $\det(A)=1$ and $|2a|>2$, or }\\
A=\begin{pmatrix}
a &  b\\
d-a & d
\end{pmatrix}
\text{ where $\det(A)=1$ and  $|a+d|>2$.}
\end{gather*}
\end{theorem}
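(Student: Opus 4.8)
The plan is to bootstrap the statement from the results already collected in this appendix.

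\emph{First part.} Suppose $A$ has infinite order and $B_0AB_0^{-1}=A^{-1}$. Applying $B_0(-)B_0^{-1}$ to both sides gives $B_0^2AB_0^{-2}=B_0A^{-1}B_0^{-1}=(B_0AB_0^{-1})^{-1}=A$, so $B_0^2\in C(A)$. By Corollary~\ref{conj} we have $C(A)\cong\zz\oplus\zz_2$, whose torsion subgroup is exactly $\{\pm I_2\}$; hence if $B_0^2\ne\pm I_2$ then $B_0^2$ has infinite order. Using \eqref{eq:M_0} write $A=\pm M_0^{\ell_0}$ (with $\ell_0\ne 0$) and $B_0^2=\pm M_0^{k}$ (with $k\ne 0$) for the primitive matrix $M_0$. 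The centralizer identity $C(X)=C(-X)=C(X^n)$, valid for $X$ of infinite order, then yields $C(B_0^2)=C(M_0^k)=C(M_0)=C(A)$; since $B_0$ commutes with $B_0^2$ this forces $B_0\in C(A)$, so $A^{-1}=B_0AB_0^{-1}=A$, i.e. $A^2=I_2$, contradicting that $A$ has infinite order. Therefore $B_0^2=\pm I_2$, so $B_0$ has finite order and Corollary~\ref{conj} applies: $B_0$ is conjugate to one of the three listed matrices. Finally, if $B_0=PB_0'P^{-1}$ with $B_0'$ one of those three, then $B_0'(P^{-1}AP)(B_0')^{-1}=(P^{-1}AP)^{-1}$, so $P^{-1}AP$ is a solution of the corresponding equation in Proposition~\ref{eqanti} and hence has one of the forms listed there; as $A$ is conjugate to $P^{-1}AP$, the first assertion follows.

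\emph{Second part.} Now assume $A$ is Anosov and keep the conjugate $A'=P^{-1}AP$ produced above. Being Anosov is invariant under conjugation, so $A'$ is Anosov; in particular it has infinite order, which rules out the finite-order solutions $\pm\begin{pmatrix}1&1\\0&-1\end{pmatrix}$ of Proposition~\ref{eqanti}. Hence $A'$ equals one of $\begin{pmatrix}a&b\\b&d\end{pmatrix}$, $\begin{pmatrix}a&b\\c&a\end{pmatrix}$, $\begin{pmatrix}a&b\\d-a&d\end{pmatrix}$, each with $\det(A')=1$ (the remaining special solution $\begin{pmatrix}1&b\\0&1\end{pmatrix}$ being the third form with $a=d=1$). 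Since $\det(A')=1$, the definition of \emph{exceptional} shows that $A'$ is Anosov if and only if $|\tr(A')|>2$; because $\tr(A')$ equals $a+d$, $2a$ and $a+d$ in the three cases, this is exactly the extra hypothesis in the statement. As $A$ is conjugate to $A'$, we are done.

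\emph{Expected difficulty.} The only step that is not a verbatim appeal to an earlier result is showing that $B_0$ has finite order under the sole hypothesis that $A$ has infinite order (the analogous proposition in this appendix was phrased for $A$ Anosov); this is handled above via $B_0^2\in C(A)$ together with the centralizer identity $C(X^n)=C(X)$. The remainder is careful bookkeeping: transporting everything by conjugation so that Proposition~\ref{eqanti}, stated only for the three distinguished choices of $B_0$, can be invoked, and converting non-exceptionality of a determinant-one Anosov matrix into the trace inequality.
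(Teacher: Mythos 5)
Your proof is correct and follows the route the paper intends: the paper's own proof of Theorem~\ref{eqsl2} consists of the single sentence that it ``follows immediately from Proposition~\ref{eqanti}'', leaving all the assembly to the reader. The one place where you genuinely add something is the verification that $B_0$ has finite order under the sole hypothesis that $A$ has infinite order: the paper's unnumbered proposition establishing $B_0^2=\pm I_2$ is stated and proved only for $A$ Anosov, so Corollary~\ref{conj} (whose hypothesis includes finiteness of the order of $B_0$) cannot be invoked verbatim when $A$ has infinite order but is exceptional. Your argument via $B_0^2\in C(A)$, the description~(\ref{eq:M_0}) of $C(A)$ for $A$ of infinite order, and the identity $C(X^k)=C(X)$ closes this gap cleanly; alternatively one could quote Lemma~\ref{sak1} directly, since for $A$ conjugate to $\pm\begin{pmatrix}1&n\\0&1\end{pmatrix}$ it lists $R(A)$ explicitly and every element there has order two. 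The remaining bookkeeping --- transporting by conjugation so that Proposition~\ref{eqanti} applies, discarding the finite-order solutions in case III, and translating non-exceptionality of a determinant-one matrix into the trace bounds $|a+d|>2$, $|2a|>2$ --- matches what the paper takes for granted, and is carried out correctly.
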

\begin{proof}  The proof follows  immediatly from Proposition \ref{eqanti}.
\end{proof}

\begin{remark}
We do not know a practical algorithm to decide
  if an arbitrary matrix is conjugate to one of the matrices of the
  families $\mathbb{F}_1, \mathbb{F}_2$ and $\mathbb{F}_3$.  In \cite{daci-peter0} examples of Anosov
  matrices $A$ where $R(A)$ is non empty were given, to illustrate
  some result in fixed point theory, where the examples are matrices
  which  belong to $\mathbb{F}_1$.

  Based on the result above, we can deduce the following consequence
  for fixed point theory of the spaces mapping torus.: It  follows from
  \cite{daci-peter0} and the Theorem \ref{eqsl2} that in order to have
  an automorphism of $(\zz\oplus\zz)\rtimes_{A} \zz$ which has Reidemeister
  finite, where $A$ is Anosov, 
   $A$ is conjugated to  one of  the matrices of the form given by Theorem \ref{eqsl2}, i.e.

  $A=\begin{pmatrix} a & b \\ b & d
  \end{pmatrix}, $
  \noindent where $ad-b^2=1$, i.e.     $\det(A)=1$,\\
 or 
 
 $A=\begin{pmatrix}
 a &  b\\
 c & a
 \end{pmatrix}, $
 \noindent where $a^2-bc=1$, i.e.  $\det(A)=1$,\\
 or       
 
 $A=\begin{pmatrix}
 a &  b\\
 d-a & d
 \end{pmatrix}, $
 \noindent where $ad-bd+ab=1$, i.e.  $\det(A)=1$.\\
\noindent We claim that in the former case we are able to construct    an automorphism with Reidemeister number finite.
In fact they are the  automorphisms which when restricted to $\zz\oplus\zz$ have matrix  
 $B_0=\begin{pmatrix}
0 & -1 \\
1 & 0
\end{pmatrix}.$ 

For the  second case   we claim that it is not possible, since the product of the matriz 
$B_0 = \begin{pmatrix}
1 & 0 \\
0 & -1

\end{pmatrix}$ 

by any matrix of the form 

$A = \begin{pmatrix}
a & b \\
b & a
\end{pmatrix}$ 
is a matrix which has Reidemeister number infinite.

For the  third  case   we also claim that it is not possible to find such examples in fixed point theory. For the product of the matriz 
$B_0 = \begin{pmatrix}
1 & 1 \\
0 & -1

\end{pmatrix}$ 

by  matrix of the form 

$A = \begin{pmatrix}
a & b \\
d-a & d
\end{pmatrix}$ 
\noindent is the  matrix 
$\begin{pmatrix}
d & b+d \\
a-d & -d
\end{pmatrix}$ 
\noindent which has Reidemeister infinite since $\det(A)=1$. The conclusion is that the type of  examples provided  in \cite{daci-peter0} 
are the only ones which have Reidemeister finite. By type we mean that the elements of the anti-diagonal are the same.  
\end{remark}

\subsection{About $\Aut_0^1(E)$}
We have seen that our main result about $\Aut(E)$ and $\Out(E)$  depends if $\Aut_0^1(E)$ is empty or not, which in turn is equivalent to say that a certain matrix has a square root or not. Here we 
show a lemma which describes for certain matrices, closely related to our study,  its square roots. Then this can be used to decide if    $\Aut_0^1(E)$ is empty or not.  \\
A practical criterion to decide if  a matrix has a square root was given by  Paulo Agozzini,  \cite{Ag}. He shows:

\begin{lemma}\label{PA}  The square roots of a matrix $A$ are either     $\pm(A+Id)/\sqrt{T+2}$ or $\pm(A-Id)/\sqrt{T-2}$ where the   matrices which are the square roots  are in $GL_2(\zz)$. Otherwise $A$ does not admite a square root. 
\end{lemma}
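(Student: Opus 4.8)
The plan is to run the Cayley--Hamilton theorem on a hypothetical square root and read off the finitely many possibilities. Write $T=\tr(A)$ and suppose $S\in M_2(\zz)$ satisfies $S^2=A$. Cayley--Hamilton applied to $S$ gives $S^2-(\tr S)S+(\det S)I_2=0$, hence $A=(\tr S)S-(\det S)I_2$. Taking determinants yields $(\det S)^2=\det A$; since $\det S\in\zz$ and $\det A=\pm 1$, this forces $\det A=1$ (so a matrix of determinant $-1$ has no square root even over $\rr$) and $\det S\in\{1,-1\}$; in particular $S\in GL_2(\zz)$ automatically.

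Next I would dispose of the degenerate case $\tr S=0$: then the identity above reads $A=-(\det S)I_2=\mp I_2$, so $A=\pm I_2$. Since the lemma is meant to be applied to matrices $A$ of infinite order (in particular Anosov), $A\ne\pm I_2$, so from now on $\tr S\ne 0$ and we may solve $A=(\tr S)S-(\det S)I_2$ for $S$, obtaining $S=\dfrac{1}{\tr S}\bigl(A+(\det S)I_2\bigr)$. Taking traces of $A=(\tr S)S-(\det S)I_2$ gives $(\tr S)^2=T+2\det S$. If $\det S=1$ this is $(\tr S)^2=T+2$, whence $\tr S=\pm\sqrt{T+2}$ and $S=\pm\dfrac{A+I_2}{\sqrt{T+2}}$; if $\det S=-1$ it is $(\tr S)^2=T-2$, whence $S=\pm\dfrac{A-I_2}{\sqrt{T-2}}$. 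So every square root of $A$ is one of these four candidates, and a square root in $GL_2(\zz)$ can exist only when one of $T+2$, $T-2$ is a (nonnegative) perfect square and the corresponding candidate has integer entries.

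For the converse I would check directly that each candidate, whenever it lies in $M_2(\zz)$, really is a square root: using Cayley--Hamilton on $A$ with $\det A=1$, i.e.\ $A^2=TA-I_2$, one computes $(A+I_2)^2=A^2+2A+I_2=(T+2)A$ and $(A-I_2)^2=A^2-2A+I_2=(T-2)A$, so dividing by $T+2$ (resp.\ $T-2$) recovers $A$. Combining the two directions gives the statement as phrased: the only possible square roots are $\pm(A+I_2)/\sqrt{T+2}$ and $\pm(A-I_2)/\sqrt{T-2}$, they do square to $A$ precisely when they are integral, and if neither of them is integral then $A$ admits no square root in $GL_2(\zz)$.

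I expect the only delicate point to be the bookkeeping around the degenerate values $T=\pm 2$ together with the trace-zero square roots, that is, the cases $A=\pm I_2$, where the formulas become $0/0$ and extra square roots (of trace zero) appear. This is precisely why I would state at the outset that the lemma is invoked only for $A$ of infinite order, so that $A\ne\pm I_2$ and both $T+2$ and $T-2$ are nonzero; with that in place the rest is the routine Cayley--Hamilton computation above.
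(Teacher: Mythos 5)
Your Cayley--Hamilton argument is correct, and it is worth noting that the paper itself offers \emph{no} proof of this lemma: it is quoted as a private communication of Paulo Agozzini, so your write-up actually supplies the missing argument rather than duplicating one. The two directions are both sound: from $S^2=A$ you get $A=(\tr S)S-(\det S)I_2$, the determinant computation forces $\det A=1$ and $\det S=\pm1$, and taking traces gives $(\tr S)^2=T+2\det S$, which pins $S$ down to the four stated candidates; conversely $(A\pm I_2)^2=(T\pm2)A$ (using $A^2=TA-I_2$) shows each integral candidate really is a square root, and one can add that $\det(A+I_2)=T+2$ and $\det(A-I_2)=-(T-2)$, so the integral candidates automatically land in $GL_2(\zz)$ with determinant $1$ and $-1$ respectively. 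Your handling of the degenerate case is also the right call: the trace-zero square roots occur exactly when $A=\pm I_2$, where the lemma's formulas break down and extra square roots exist; the paper's statement silently ignores this, but since the lemma is only invoked for Anosov matrices (which have $|T|>2$, so $T\pm2\ne0$ as well), your explicit exclusion of $A=\pm I_2$ makes the statement correct in the form actually used. No gaps.
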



It follows from  the criterion given by Lemma \ref{PA} that if a matrix has a square root, then  at most one of the two numbers $T+2$, $T-2$ has integral square root. In our case we have $T=2x$ and we have the two numbers $2x+2$ and $2x-2$.  Besides one of the two numbers having  integral root square, all the enters of the matrix $2y$ and $2z$ must be divisible by the square root.  

Now we apply the lemma above  in order to describe   all matrices  of the form 
 $\begin{pmatrix}
x &  2y\\
2z & x
\end{pmatrix}, $
which admit  square root.

\begin{lemma}\label{rootl} Consider the matrizes of the form 
 $\begin{pmatrix}
2\lambda^2+1 &  2\lambda y_1\\
2\lambda z_1 & 2\lambda^2+1
\end{pmatrix}, $
\noindent where $\lambda$ runs over  $\zz$ and all $y_1, z_1$ such that $y_1z_1=\lambda^2+1$

or 

 $\begin{pmatrix}
2\lambda^2-1 &  2\lambda y_1\\
2\lambda z_1 & 2\lambda^2-1
\end{pmatrix}, $
\noindent where $\lambda$ runs over  $\zz$ and all $y_1, z_1$ such that $y_1z_1=\lambda^2-1.$

 If a matrix of the form  $\begin{pmatrix}
x &  2y\\
2z & x
\end{pmatrix}, $
admits a square root, then it is one of the matrices  above.
\end{lemma}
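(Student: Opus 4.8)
The plan is to run the dichotomy supplied by Agozzini's criterion, Lemma~\ref{PA}, and then do the short divisibility‑and‑squaring computation in each branch. Write $A=\begin{pmatrix} x & 2y \\ 2z & x\end{pmatrix}$ with $x,y,z\in\zz$, suppose $A$ admits a square root $B\in GL_2(\zz)$, and set $T=\tr(A)=2x$. By Lemma~\ref{PA} the matrix $B$ is one of $\pm(A+I_2)/\sqrt{T+2}$ or $\pm(A-I_2)/\sqrt{T-2}$; since $B$ has integer entries, whichever of $\sqrt{T+2}=\sqrt{2x+2}$ or $\sqrt{T-2}=\sqrt{2x-2}$ occurs in the valid expression is a nonnegative integer dividing every entry of $A+I_2$, resp.\ $A-I_2$. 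Being the square root of an even integer, that integer is itself even; write it as $2\lambda$ with $\lambda\in\zz$, $\lambda\ge 0$. Replacing $B$ by $-B$ (again a square root of $A$) changes nothing, so the overall sign is irrelevant.

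First I would treat the branch $B=\pm(A+I_2)/(2\lambda)$, which forces $2x+2=4\lambda^2$, i.e.\ $x=2\lambda^2-1$. Here $A+I_2=\begin{pmatrix}2\lambda^2 & 2y \\ 2z & 2\lambda^2\end{pmatrix}$, and divisibility of its entries by $2\lambda$ gives $\lambda\mid y$ and $\lambda\mid z$; write $y=\lambda y_1$, $z=\lambda z_1$. Then $B=\pm\begin{pmatrix}\lambda & y_1\\ z_1 & \lambda\end{pmatrix}$, and squaring,
\[
A=B^2=\begin{pmatrix}\lambda^2+y_1z_1 & 2\lambda y_1\\ 2\lambda z_1 & \lambda^2+y_1z_1\end{pmatrix}.
\]
Comparing the diagonal entries with $x=2\lambda^2-1$ yields $y_1z_1=\lambda^2-1$, so $A$ is in the second family listed in the statement.

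The branch $B=\pm(A-I_2)/(2\lambda)$ is completely parallel: now $2x-2=4\lambda^2$, so $x=2\lambda^2+1$, again $\lambda\mid y$ and $\lambda\mid z$, and matching $\lambda^2+y_1z_1$ against $2\lambda^2+1$ gives $y_1z_1=\lambda^2+1$, the first family. This covers every case in which the denominator $\sqrt{T\pm2}$ is positive; the remaining bookkeeping is the only point requiring a little care, and concerns the degenerate traces where a denominator vanishes. A degenerate formula arises only for $T+2=0$ (in $\pm(A+I_2)/\sqrt{T+2}$) or $T-2=0$ (in $\pm(A-I_2)/\sqrt{T-2}$); inspecting the identity underlying Lemma~\ref{PA}, such an expression represents an integer square root of $A$ only when its numerator vanishes, i.e.\ when $A=-I_2$ or $A=I_2$ respectively — and these are precisely the $\lambda=0$ instances of the two families. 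Any $A$ of the given form not reachable by one of the four non‑degenerate or these two degenerate cases simply has no square root in $GL_2(\zz)$ by Lemma~\ref{PA}, so there is nothing left to classify, and the lemma follows.
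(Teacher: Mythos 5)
Your argument is correct and follows the same route as the paper, which simply invokes Lemma~\ref{PA}; you have merely written out the divisibility and squaring computations (forcing $\sqrt{2x\pm 2}=2\lambda$, hence $x=2\lambda^2\mp 1$, $\lambda\mid y$, $\lambda\mid z$, and then $y_1z_1=\lambda^2\mp 1$ from the diagonal of $B^2$) that the paper leaves implicit, together with a careful treatment of the degenerate traces $x=\pm 1$. No gaps.
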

\begin{proof} Certainly  the matrices of the form given by Lemma  \ref{rootl} admit a square root. Now using Lemma \ref{PA} we obtain the result.
\end{proof}


\noindent{\bf Department of Mathematics-IME\\
University of S\~ao Paulo\\
Rua~do~Mat\~ao~1010\\
CEP:~05508--090, S\~ao Paulo --- SP, Brasil}\\
e-mail: dlgoncal@ime.usp.br

\vspace{1.5mm}

\noindent{\bf Department of Mathematics\\
Federal University of Santa Catarina\\
Campus Universit\'ario Trindade\\
CEP:~88040--900, Florian\'opolis --- SC, Brasil}\\
e-mail: sergio.tadao.martins@ufsc.br

\end{document}